\newtheorem{theorem}{Theorem}[section]
\newtheorem*{theorem*}{Theorem}
\newtheorem*{conjecture*}{Conjecture}
\newtheorem{thm}[theorem]{Theorem}
\newtheorem{lem}[theorem]{Lemma}
\newtheorem{prop}[theorem]{Proposition}
\newtheorem{cor}[theorem]{Corollary}
\newtheorem{df}[theorem]{Definition}
\newcommand{\ie}{{\em i.e.}\ }
\newcommand{\opname}[1]{\operatorname{\mathsf{#1}}}
\renewcommand{\mod}{\opname{mod}\nolimits}
\newcommand{\add}{\opname{add}\nolimits}
\newcommand{\rank}{\opname{rank}\nolimits}
\newcommand{\ind}{\opname{ind}}
\newcommand{\Fac}{\opname{Fac}}
\newcommand{\APR}{\opname{APR}}
\newcommand{\lcm}{\opname{lcm}}
\newcommand{\cok}{\opname{cok}\nolimits}
\renewcommand{\ker}{\opname{ker}\nolimits}
\newcommand{\X}{\mathbb{X}}
\newcommand{\Z}{\mathbb{Z}}
\newcommand{\N}{\mathbb{N}}
\newcommand{\Q}{\mathbb{Q}}
\newcommand{\D}{\mathbb{D}}
\renewcommand{\P}{\mathbb{P}}
\newcommand{\ra}{\rightarrow}
\newcommand{\Hom}{\opname{Hom}}
\newcommand{\Ext}{\opname{Ext}}
\newcommand{\Aut}{\opname{Aut}}
\newcommand{\coh}{\opname{coh}}
\newcommand{\vect}{\opname{vect}}
\newcommand{\SL}{\opname{SL}}
\newcommand{\Pic}{\opname{Pic}}
\newcommand{\cg}{{\mathcal G}}
\newcommand{\co}{{\mathcal O}}
\newcommand{\ct}{{\mathcal T}}
\newcommand{\mc}{\mathcal{C}}
\newcommand{\md}{\mathcal{D}}
\newcommand{\mh}{\mathcal{H}}
\newcommand{\ma}{\mathcal{A}}
\newcommand{\mt}{\mathcal{T}}
\newcommand{\ms}{\mathcal{S}}
\newcommand{\mw}{\mathcal{W}}
\newcommand{\tauni}{\tau^{-1}}
\newcommand{\homh}{\Hom_{\mathcal{H}}}
\newcommand{\xra}{\xrightarrow}
\newcommand{\lam}{\bm{\lambda}}
\begin{document}

\title[tilting graph of tubular type]{Mutation of tilting bundles of tubular type}

\author{Shengfei Geng}
\address{Shengfei Geng\\Department of Mathematics\\SiChuan University\\610064 Chengdu\\P.R.China}
\email{genshengfei@scu.edu.cn}
\subjclass[2010]{05C40,  16G70, 18E10}
\keywords{tilting bundles, APR mutation, Co-$\APR$ mutation, bundle-mutation, (only) minimal direct summand of  a tilting object, (only) maximal  direct summand of  a tilting object, tilting graph}

\maketitle

\begin{abstract}

Let $\X$ be a  weighted projective line of tubular type and $\coh\X$ the category of coherent sheaves on  $\X$. The main purpose of this note is to show that the subgraph of the tilting graph consisting of all basic tilting bundles of $\coh\X$ is connected. This   yields an alternative proof for the connectedness of the tilting graph of $\coh\X$.
Our approach leads to the investigation of the change of slopes of a tilting sheaf in $\coh\X$ under (co-)APR mutations, which may be of independent interest.

\end{abstract}

\section{Introduction}~\label{introduction}
Tilting quiver of a hereditary abelian category was introduced by Happel and Unger~\cite{HU}, which encodes information of mutations of tilting objects. Its underlying graph is called the tilting graph. The connecetedness of tilting graphs for hereditary catetgories is a basic problem in tilting theory, which has important application in the categorification theory of cluster algebras~({\it cf.}~\cite{BMRRT, CK,BG} for instance). Among others, building on the connectedness of tilting graphs established by Happel and Unger~\cite{HU}, Buan et al.~\cite{BMRRT} established the connectedness of cluster-tilting graphs of cluster categories, which implies that there is a one-to-one correspondence between indecomposable rigid objects of a cluster category and cluster variables of the associated cluster algebra. The connectedness of (cluster)-tilting graphs
also play a fundamental role in the representation-theoretic approach to the denominator conjecture in cluster algebras ({\it cf.}~\cite{GP, FG17}).

 Let $\mathcal{H}$ be a connected hereditary abelian category over an algebraically closed field $k$ and $\mathcal{G}(\mathcal{T}_\mathcal{H})$ the tilting graph of $\mathcal{H}$. According to Happel's classification~\cite{H}, each connected hereditary abelian category with tilting objects over $k$ is derived equivalent to $\mod kQ$ for a finite acyclic quiver $Q$ or the category $\coh\X$ of coherent sheaves on a weighted projective line $\X$.
If $\mathcal{H}$ is derived to $\mod kQ$ for an acyclic quiver $Q$ which is not of wild type, Happel and Unger~\cite{HU} obtained an explicitly characterization of the connectedness of  $\mathcal{G}(\mathcal{T}_\mathcal{H})$. Moreover, they also proved that $\mathcal{G}(\mathcal{T}_\mathcal{H})$ is connected provided that $Q$ is of wild type and $\mathcal{H}$ does not contain non zero projective objects.
Weighted projective lines are classified as domestic type, tubular type and wild type according to their genus. It is known that $\coh\X$ of domestic type is derived to  $\mod kQ$ for an acyclic quiver $Q$ of tame type, while $\coh\X$ is never derived to a hereditary algebra when $\X$ is of tubular or wild type. It was conjectured by Happel and Unger~\cite{HU} that the tilting graph $\mathcal{G}(\mathcal{T}_\X) $ of $\coh\X$ is connected for a tubular and wild weighted projective line $\X$. This has been confirmed by Barot, Kussin and Lenzing~\cite{BKL} for tubular type and by Fu and Geng~\cite{FG18} for wild type.

Let $\X$ be a weighted projective line over $k$. Denote by $\vect\X$ the full subcategory of $\coh\X$ consisting of vector bundles. Recently, the subcategory $\vect\X$ has obtained a lot of attention ({\it cf.}~\cite{KLM1, KLM2, L, CLR}). A tilting object $T$ of $\coh\X$ lying in $\vect\X$ is called a {\it tilting bundle}. Denote  by   $\cg(\mt^v_{\X})$ the subgraph of the tilting graph $\cg(\mt_{\X})$  consisting of all basic tilting bundles in $\coh\X$. We are interested in  whether the subgraph  $\cg(\mt^v_{\X})$ is also connected.
  When $\X$ is of domestic type, the Auslander-Reiten quiver of $\vect\X$ is equivalent to $\Z Q$ with $Q$ is an extended Dynkin quiver. By suitable $\APR$ mutations and co-$\APR$ mutations, one can get  the subgraph  $\cg(\mt^v_{\X})$  is connected ({\it cf.}~\cite{BKL}).   When $\X$ is of tubular type, the Auslander-Reiten quiver of $\vect\X$  is consisted of standard tubes.    In this note,  by investigating some properties of mutations on tilting objects, 
  we find that $\cg(\mt^v_{\X})$  is also connected,  \ie we get the following main results:
  \begin{theorem}(see Theorem~\ref{main result})\label{main thm1}
Let $\X$ be a weighted projective line of tubular type.  The subgraph $\cg(\mt^v_{\X})$ of the tilting graph $\cg(\mt_\X)$ consisting of all the basic tilting bundles  in $\coh\X$   is  connected.
\end{theorem}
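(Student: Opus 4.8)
The plan is to show that every basic tilting bundle of $\coh\X$ can be linked, by a path all of whose vertices are tilting bundles, to one fixed reference tilting bundle; this yields the connectedness of $\cg(\mt^v_\X)$. Throughout we use the structure of $\coh\X$ for tubular $\X$: it decomposes as $\coh\X=\bigvee_{q\in\Q\cup\{\infty\}}\ct_q$ with each $\ct_q$ a standard tubular family, every indecomposable $E$ carries a slope $\mu(E)=\deg E/\rank E\in\Q\cup\{\infty\}$, an indecomposable is a vector bundle iff $\mu(E)\ne\infty$, and---a phenomenon special to the tubular case---every indecomposable sheaf is semistable, so that $\Hom(E,F)=0$ whenever $E,F$ are indecomposable with $\mu(E)>\mu(F)$. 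For a basic tilting sheaf $T=\bigoplus_iT_i$ put $\mu_{\min}(T)=\min_i\mu(T_i)$ and $\mu_{\max}(T)=\max_i\mu(T_i)$, so that $T$ is a tilting bundle precisely when $\mu_{\max}(T)<\infty$. Since $\coh\X$ has no nonzero projective or injective object, each indecomposable summand of each basic tilting object has a unique mutation, governed by an exchange short exact sequence: if the mutated summand $X$ receives no nonzero map from the other summands, the sequence has $X$ as its sub term (an $\APR$-type mutation), and if $X$ maps to none of the other summands, it has $X$ as its quotient term (a co-$\APR$-type mutation). Call an edge of $\cg(\mt_\X)$ a \emph{bundle-mutation} when both its endpoints are tilting bundles.

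The technical core is the effect on slopes of mutation at the extremal summands of a tilting bundle $T$. Let $Y$ be the \emph{only maximal direct summand} of $T$, i.e.\ the unique summand with $\mu(Y)=\mu_{\max}(T)$. By semistability $\Hom(Y,T_i)=0$ for all other summands, so the exchange sequence reads $0\to Y^{*}\to E\to Y\to 0$ with $E\in\add(T/Y)$ a minimal right approximation. Since $Y^{*}$ is a subsheaf of the bundle $E$ it is torsion-free, hence $\rank Y^{*}=\rank E-\rank Y\ge 1$ and $Y^{*}$ is again a bundle: \emph{co-$\APR$ mutation at the only maximal summand is always a bundle-mutation}. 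Moreover every summand of $E$ has slope $<\mu_{\max}(T)$, so $\deg Y^{*}=\deg E-\deg Y<\mu_{\max}(T)(\rank E-\rank Y)=\mu_{\max}(T)\rank Y^{*}$, whence $\mu(Y^{*})<\mu_{\max}(T)$ and the top slope strictly decreases. The dual operation, $\APR$ mutation at the \emph{only minimal direct summand} $X$, fits into $0\to X\to E'\to X^{*}\to 0$ with $E'\in\add(T/X)$, and a degree count gives $\mu(X^{*})>\mu(X)$ as soon as $\rank X^{*}=\rank E'-\rank X>0$; but here $X^{*}$ is merely a \emph{quotient} of a bundle, and when $\rank E'=\rank X$ it is a nonzero torsion sheaf, so the mutation need not stay in $\cg(\mt^v_\X)$. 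This asymmetry---pushing $\mu_{\max}$ down is always safe, pushing $\mu_{\min}$ up is not---is where the work lies. When the extremal slope is attained by several summands, one observes that the full subquiver of $\End(T)$ on those summands, being the quiver of the endomorphism algebra of a rigid object concentrated in a single tubular family, is acyclic, hence has a source and a sink; mutating there is again a bundle-mutation which does not enlarge the interval $[\mu_{\min}(T),\mu_{\max}(T)]$, and after finitely many such steps one reaches a tilting bundle with a unique extremal summand.

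Iterating these moves, one brings an arbitrary tilting bundle, by bundle-mutations only, into a normal form whose summand-slopes lie in a bounded interval $[p,q]$ with $0\le p\le q<\infty$, and in fact onto a tilting bundle supported on just a couple of neighbouring tubular families together with a fixed line-bundle summand. On such a small band the combinatorics is that of a tame situation: passing to the perpendicular category of the line-bundle summand one lands in a category of domestic type, for which the connectedness of the graph of tilting bundles is known---by the domestic analogue of the theorem recalled in the introduction, which rests on $\APR$ and co-$\APR$ mutations and, for its hereditary instances, on Happel--Unger~\cite{HU}. Because in this range the two notions of vector bundle agree, every vertex of the resulting connecting path is again a tilting bundle and every edge is a bundle-mutation; linking the reference bundle to the same normal form then completes the argument.

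The genuine obstacle is to make the compression in the second and third paragraphs \emph{terminate}: one needs a well-founded measure on tilting bundles that strictly decreases along a carefully chosen sequence of bundle-mutations, in the face of two traps. The ``torsion trap'': $\APR$ mutation at the minimal summand would create a torsion summand and leave $\cg(\mt^v_\X)$, so raising $\mu_{\min}$ requires either showing that this configuration does not occur along the chosen sequence, or detouring through further minimal-family mutations to avoid it. The ``tube trap'': a safe co-$\APR$ mutation may only reshuffle objects inside a single tubular family without lowering its slope, so one must exploit the finite tilting combinatorics of standard tubes to guarantee eventual escape. Controlling both simultaneously---by interleaving minimal- and maximal-summand mutations and routing around the bad configurations---is the heart of the proof; once this is settled, the reduction to the domestic setting and the appeal to the already-established connectedness there are routine.
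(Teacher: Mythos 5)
Your outline assembles several ingredients that the paper also uses: mutation at the only maximal (resp.\ minimal) direct summand is a co-$\APR$ (resp.\ $\APR$) mutation, such mutations stay inside the class of tilting bundles, and several extremal summands of equal slope can be reduced to a single one by mutations inside a tubular family. (Incidentally, your ``torsion trap'' does not exist: Theorem~\ref{slop change under APR and coAPR mutation} shows $\mu T_1\leq\mu T_1^*\leq\mu T_n<\infty$, so the $\APR$ mutation of a tilting bundle at its only minimal summand is automatically again a bundle.) But the proposal is not a proof, and you say so yourself: the termination of the slope-compression is declared to be ``the heart of the proof'' and is left open. This is a genuine gap, and it cannot be closed along the lines you sketch. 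Slopes of summands range over a dense subset of $\Q$; making $\mu_{\max}$ strictly decrease (or $\mu_{\min}$ strictly increase) while remaining in the original interval gives no well-founded measure, and no argument is offered that the process lands on ``a couple of neighbouring tubular families.'' The final reduction is also shaky: the right perpendicular category of a line bundle is not what the paper uses; the relevant fact (Proposition~\ref{perpendicular cat is tame hereditary}) is that the perpendicular category of a \emph{quasi-simple rigid vector bundle} is the module category of a connected tame hereditary algebra, and only its preprojective/preinjective slices are exploited.

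The paper's actual route avoids your descent entirely; the well-founded measure it uses is number-theoretic, not mutation-theoretic. First, tilting bundles sharing a quasi-simple summand, or having quasi-simple summands $X,X'$ with $X\oplus X'$ rigid, are connected by bundle-mutations via slices in perpendicular categories (Lemmas~\ref{case common object which is the last object}--\ref{connected in case with a direct summand in perp}); this is the only place mutations are manipulated directly. Then: (a) if the slope range of $T$ contains an integer $m$, Meltzer's lemma (Lemma~\ref{line bundle belong to module canonical algebras}: either $\Ext^1_{\X}(T,L)=0$ or $\Hom_{\X}(T,L)=0$ for $L=\co(m\vec{x}_t)$) yields a quasi-simple summand $X$ of $T$ with $X\oplus L$ rigid, whence a chain from $T$ to some $T_{can}(\vec{x})$ and on to $T_{can}$ (Lemmas~\ref{case of all direct summands are linear bundles}--\ref{case the slop range contain an integer}); (b) if not, one takes a quasi-simple summand $X_0$ of full $\tau$-period with $\mu X_0=m+\frac{a}{b}$ and uses the $\SL(2,\Z)$-action on slopes realized by derived auto-equivalences (Lemmas~\ref{exist of abcd}, \ref{LM2}, \ref{extent integer}) to produce a quasi-simple rigid bundle $X_1$ with $X_0\oplus X_1$ rigid and slope $m+\frac{c}{d}$, $d<b$; the strictly decreasing denominator terminates the process at an integer slope, and each step is converted into a path of bundle-mutations by Lemma~\ref{connected in case with a direct summand in perp}. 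To complete your argument you would need either to import this mechanism or to supply a genuinely new termination proof for your compression; as written, the central reduction is missing.
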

As an easy consequence, Theorem~\ref{main thm1} yields an alternative proof for the connectedness of the tilting graph $\cg(\mt_\X)$.
In order to establish the main result, we introduce APR muation and co-APR mutation on tilting objects in $\coh\X$. For an object $E\in\coh\X$, denote by $\mu E$ the slope of $E$. The change of slopes of indecomposable direct summands of a tilting object under mutation plays a center role, which is of independent interest.
\begin{thm}(see Theorem~\ref{slop change under APR and coAPR mutation})\label{slop change under APR and coAPR mutation2}
Let $\X$ be a weighted projective line of tubular type, $T=\bigoplus_{i=1}^n T_i$ be a tilting sheaf in $\coh\X$ with $\mu T_i \leq \mu T_j$ for $1\leq i<j\leq n$. For $1\leq k\leq n$, let  $\mu_{T_k}(T)=T_k^*\oplus (\bigoplus_{i\neq k} T_i)$ be the mutation of $T$ at $T_k$. Then
\begin{itemize}
 \item[(1)] $\mu T_1\leq \mu T_1^*\leq \mu T_n$ and $\mu T_1\leq \mu T_n^*\leq \mu T_n$.

\item[(2)] If $T_k$ is a first object of $T$, then $\mu T_k\leq \mu T_k^*\leq \mu T_n$.
\item[(3)] If $T_k$ is a last object of $T$, then $\mu T_k\ge \mu T_k^*\ge\mu T_1$.

\end{itemize}
\end{thm}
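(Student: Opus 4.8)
The plan is to reduce the statement to the two exchange sequences attached to a mutation, together with the slope calculus available for tubular $\coh\X$. Recall the relevant facts: in the tubular case $\deg\vec\omega=0$, so $\tau$ preserves slopes; every indecomposable sheaf $E$ is semistable, with a well-defined $\mu E\in\Q\cup\{\infty\}$; for indecomposables, $\Hom(E,F)\neq0$ forces $\mu E\le\mu F$, and, by Serre duality and $\mu\tau=\mu$, $\Ext^1(E,F)\neq0$ forces $\mu F\le\mu E$; for each $q$, the full subcategory $\cc_q$ of semistable sheaves of slope $q$ is an exact abelian subcategory of $\coh\X$ (so the kernel and cokernel in $\coh\X$ of a morphism between objects of $\cc_q$ again lie in $\cc_q$); and the Harder--Narasimhan filtration furnishes, for each $q$, a torsion pair $(\ct^{>q},\cf^{\le q})$ in $\coh\X$ whose torsion class $\ct^{>q}$ is closed under quotients. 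Recall also that, writing $\bar T=\bigoplus_{i\neq k}T_i$, the mutation $\mu_{T_k}(T)=T_k^*\oplus\bar T$ is governed by a non-split exchange sequence which, $\coh\X$ being hereditary, is of exactly one of the two types: (A) $0\to T_k\to B\to T_k^*\to0$ with $T_k\to B$ a minimal left and $B\to T_k^*$ a minimal right $\add\bar T$-approximation, or (B) $0\to T_k^*\to B'\to T_k\to0$ with the dual properties.

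First I would prove the ``inner'' inequalities. Using that a first object $T_k$ satisfies $\Hom(T_i,T_k)=0$ for all $i\neq k$ (equivalently, that the exchange at $T_k$ is of type (A)), one has $\Hom(B',T_k)=0$, so type (B) is impossible; the non-split class in $\Ext^1(T_k^*,T_k)\cong D\Hom(T_k,\tau T_k^*)$ then yields a non-zero map $T_k\to\tau T_k^*$, whence $\mu T_k\le\mu(\tau T_k^*)=\mu T_k^*$. Dually, a last object forces type (B) and gives $\mu T_k^*\le\mu T_k$. For (1) the same dichotomy applies at $T_1$: in type (A) one gets $\mu T_1\le\mu T_1^*$ as above; in type (B), right-minimality of $B'\to T_1$ forces every indecomposable summand of $B'$ to admit a non-zero map to $T_1$, hence to have slope $\ge\mu T_1$ and therefore exactly $\mu T_1$, so $B'\in\cc_{\mu T_1}$, and $T_1^*$ — being the $\coh\X$-kernel of the epimorphism $B'\to T_1$ between objects of $\cc_{\mu T_1}$ — again lies in $\cc_{\mu T_1}$, so $\mu T_1^*=\mu T_1$. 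Either way $\mu T_1\le\mu T_1^*$, and dually (using that in type (A) at $T_n$ left-minimality puts $B$ in $\cc_{\mu T_n}$, so $T_n^*=B/T_n\in\cc_{\mu T_n}$) one gets $\mu T_n^*\le\mu T_n$.

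The decisive point, and the step I expect to be hard, is the remaining ``outer'' inequality: $\mu T_k^*\le\mu T_n$ for a first object $T_k$ (type (A)), and dually $\mu T_k^*\ge\mu T_1$ for a last object; these also supply $\mu T_1^*\le\mu T_n$ and $\mu T_1\le\mu T_n^*$ in (1). It cannot follow from numerics alone, since a quotient $B/T_k$ of $B\in\add\bar T$ can have strictly larger slope than every summand of $B$; so the first-object hypothesis on the \emph{tilting} object $T$ must be used essentially. The reduction I would start from: if $\mu T_k^*>\mu T_n$ then $T_k^*\in\ct^{>\mu T_n}$, hence $\Hom(T_k^*,\bar T)=0$, hence $T_k^*$ is a last object of the tilting object $T'=T_k^*\oplus\bar T$ and strictly realizes its maximal slope; one would then derive a contradiction from the exchange sequence $0\to T_k\to B\to T_k^*\to0$, in which $B$ is forced to be simultaneously a minimal left $\add\bar T$-approximation of the first object $T_k$ and a right-minimal right $\add\bar T$-approximation of $T_k^*$, using properties particular to tubular type. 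Making this contradiction precise is where the genuine content sits: I expect it to rely on the standard-tube structure of $\vect\X$, the explicit effect of (co-)APR mutation on slopes, and quite possibly an induction along the tilting graph $\cg(\mt_\X)$ anchored at the canonical tilting bundle, invoking its connectedness \cite{BKL}. Granting this outer bound, parts (1)--(3) assemble immediately by combining it with the inner inequalities above.
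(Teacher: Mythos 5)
Your treatment of the ``inner'' inequalities is correct and close in spirit to the paper's: the type (A)/(B) dichotomy from Lemma~\ref{induce}, the Serre-duality argument $\Ext^1(T_k^*,T_k)\neq 0\Rightarrow\Hom(T_k,\tau T_k^*)\neq 0\Rightarrow\mu T_k\le\mu T_k^*$ for a first object, and the observation that in the remaining case for $T_1$ (resp.\ $T_n$) minimality of the approximation traps $B'$ (resp.\ $B$) in $\mc_{\mu T_1}$ (resp.\ $\mc_{\mu T_n}$), forcing $\mu T_1^*=\mu T_1$. The paper reaches the same conclusion via wings (Lemma~\ref{mutation at quasi-length}) rather than via exactness of $\mc_q$, but these are interchangeable.

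However, the ``outer'' inequalities ($\mu T_1^*\le\mu T_n$, $\mu T_n^*\ge\mu T_1$, and the bounds $\mu T_k^*\le\mu T_n$, $\mu T_k^*\ge\mu T_1$ in (2) and (3)) are left unproved: you correctly isolate them as the decisive step, reduce to the situation where $T_k^*$ would be the only maximal direct summand of $T'=T_k^*\oplus\overline T$, and then stop, gesturing at ``properties particular to tubular type'' and ``possibly an induction along the tilting graph invoking its connectedness.'' That is a genuine gap, and the connectedness route would in any case be methodologically backwards here, since Theorem~\ref{slop change under APR and coAPR mutation} is an ingredient in the paper's alternative proof of connectedness. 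The missing mechanism is the following. If $\mu T_k^*>\mu T_n$, then by Lemma~\ref{tiling module and tilting complex} the object $\tau T_k^*[-1]\oplus\overline T$ is a tilting complex in $\md^b(\X)$. Apply a telescopic autoequivalence $F$ (Theorem~\ref{telescopic functor}) with $F(T_n)\in\mc_\infty$; by Lemma~\ref{mor under automorphism} every $F(T_i)$ stays in $\coh\X$ with the order of slopes preserved, while $F(\tau T_k^*)$ is pushed into $\coh\X[1]$, so that $F(\tau T_k^*[-1])$ is a \emph{sheaf} of slope strictly smaller than $\mu(F(T_1))\le\mu(F(T_k))$. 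Thus $F(T)$ is again a tilting sheaf with $F(T_k)$ a first (resp.\ minimal) direct summand, and its mutation at $F(T_k)$ is the sheaf $F(\tau T_k^*[-1])$ of strictly smaller slope --- contradicting the inner inequality you have already established. So the outer bound is obtained by transporting the configuration with an autoequivalence and feeding it back into the inner bound; no tube-by-tube analysis and no tilting-graph induction is needed. Without this step (or a substitute for it) the proposal does not prove parts (1)--(3).
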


The paper is organized as follows. In Section~\ref{S:basic-notions}, we recall some  definitions and properties of coherent sheaves on a weighted projective line.
We then focus on weighted projective lines of tubular type. For a weighted projective line $\X$ of tubular type, we discuss the tilting objects in $\coh\X$, the automorphisms of bounded derived category $\md^b(\X)$ of $\coh\X$ and the perpendicular category of some quasi-simple rigid object in $\coh\X$ in Section~\ref{properties of tubular type}. In Section~\ref{bundle-mutation}, for some special direct summands of  a tilting object in $\coh\X$,  we  discuss how the slopes changed under mutations. In particular, Theorem~\ref{slop change under APR and coAPR mutation2} is proved. We introduce APR mutation and co-APR mutation for tilting objects and proved that the APR mutation and co-APR mutation on a tilting bundle are tilting bundles. The proof of Theorem~\ref{main thm1} is presented in Section ~\ref{connectedness of tilting bundles}.

\noindent{\bf Notation}.
For a weighted projective line $\X$ with weight sequence $(p_1,p_2,\cdots,p_t)$, we denote by   $\ind\X$ the set of all indecomposable objects in $\coh\X$,  $\ind \md^b(\X)$ the set of all indecomposable objects in $\md^b(\X)$.  For an indecomposable object $X$, we denote by $\mu X$ the slope of $X$.  Denote by $p=\lcm(p_1,\cdots,p_t)$ the least common multiple of $p_1,p_2,\ldots,p_t$. For an indecomposable object $Z$ in a standard tube, we denote by $\mt_Z$ the tube where $Z$ lies in, $\mw_Z$ the wing determined by Z, $q.l. Z$ the quasi-length of $Z$ in $\mt_Z$.  

\section{ Preliminary}~\label{S:basic-notions}

\subsection{ Coherent  sheaves associated to a weighted projective line}

Let $\X=\X(\mathbf{p},\lam)$ be a weighted projective line attached to a weight sequence $\mathbf{p}=(p_1,\dots,p_t)$
 of positive integers $p_i$ and
a  parameter sequence  $\lam=(\lambda_1\dots,\lambda_t)$  of pairwise distinct elements of $\P_1(k)$.
Denote by $L(\mathbf{p})$  the rank one additive group
$$L(\mathbf{p}):=\langle\vec{x}_1,\cdots,\vec{x}_t|p_1\vec{x}_1=\cdots=p_t\vec{x}_t=:\vec{c}\rangle$$
and  $S(\mathbf{p},\lambda)$  the $L(\mathbf{p})$-graded commutative algebra
$$S(\mathbf{p},\lambda)=k[u,v,x_1,\cdots,x_t]/(x_i^{p_i}-\lambda_i'u-\lambda_i^{''}v|i=1,\cdots,t)$$
where $\deg x_i=\vec{x}_i$ and $\lambda_i=[\lambda_i':\lambda_i^{''}]\in\P^1$.  In~\cite{GL1}, Geigle-Lenzing have associated to each weighted projective line $\X(\mathbf{p},\lam)$ a category $\coh\X$ of coherent sheaves on $\X$, which is
  the quotient category of
finitely generated $L(\mathbf{p})$-graded $S(\mathbf{p},\lam)$-modules by the Serre subcategory of finite length modules.  Geigle-Lenzing showed that  $\coh\X$ is a connected,   $k$-linear hereditary abelian  category with finite dimensional $\Hom$ and $\Ext$ spaces. 
 The free module $S(\mathbf{p},\lam)$ yields a structure sheaf $\co$, and shifting the grading gives twists $E(\vec{x})$ for any  $E \in\coh\X$ and $\vec{x} \in L(\mathbf{p}).$ Moreover, they showed that, putting $\vec{w} := \Sigma_{i=1}^t(\vec{c}-\vec{x}_i)-2\vec{c}$, $\coh\X$ has Serre dual in the form $\D\Ext^1_{\X}(E,F)=\Hom_{\X}(F,\tau E)$ for all $E,F\in \coh\X$ where $\tau E=E(\vec{w})$.

 Denote by $\vect\X$ the full subcategory of $\coh\X$ consisting of vector bundles, i.e. torsion-free sheaves, and by $\coh_0\X$  the full subcategory consisting of sheaves of finite length, i.e. torsion sheaves.
 Each coherent sheaf is a direct sum of a vector bundle and a finite length sheaf. Each vector bundle has a finite filtration by line bundles and there is no nonzero morphism from $\coh_0\X$ to $\vect \X$.


There are ordinary simple sheaves  $S_{\mu}$( $\mu\in \P^1\backslash \lam$ ) and exceptional simple sheaves $S_{i,j}$($1\leq i\leq t, 1\leq j\leq p_i$). It is known that $S(\vec{x})=S,\Hom_{\X}(\co,S)\neq 0$ 
for each ordinary simple sheaf $S$. For the exceptional simple sheaves, 
for each $1\leq i\leq t$, there is exactly one integer $j(1\leq j\leq p_i)$ such that $\Hom_{\X}(\co,S_{i,j})\neq 0$.

Let
 \[
p_{\lam}: \P^1\ra \N,\mu\mapsto \begin{cases}p_i&if\ \mu=\lambda_i\ for\ some\ i;\\1&else.\end{cases}
\]
be the weight function.

\begin{lem}\cite{GL1}\label{shape of coh_0 X}
The category $\coh_0\X $ of sheaves of finite length is an exact abelian, uniserial subcategory of $\coh\X$ which is stable under Auslander-Reiten transation.
The components of the Auslander-Reiten quiver of $\coh_0\X$ form a family of  pairwise orthogonal standard tubes $(\mt_{\mu})_{\mu\in\P^1}$ with rank
 $rk \mt_{\mu} = p_{\lam}(\mu)$.
\end{lem}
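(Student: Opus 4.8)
The plan is to first extract the purely categorical statements (exact abelian, uniserial, $\tau$-stable) from general facts about length categories, and then pin down the Auslander--Reiten quiver by a point-by-point local analysis of the grading of $S(\mathbf{p},\lam)$.

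First I would observe that finite length is preserved under subobjects, quotients and extensions inside the abelian category $\coh\X$, so $\coh_0\X$ is a Serre subcategory; in particular it is an exact abelian subcategory in which every object has a finite composition series. Since the grading shift $E\mapsto E(\vec{w})$ is an exact autoequivalence of $\coh\X$ and $\tau E=E(\vec{w})$, the translation $\tau$ restricts to an autoequivalence of $\coh_0\X$. The remaining structural claims — uniseriality (the subobjects of an indecomposable finite length sheaf form a chain) and standardness of the components — I would postpone, since they drop out of the local models produced below.

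Next I would decompose $\coh_0\X$ over the points of $\P^1$. The simple objects of $\coh_0\X$ are exactly the sheaves $S_\mu$ for $\mu\in\P^1\setminus\lam$ and $S_{i,j}$ for $1\le i\le t,\ 1\le j\le p_i$, as recalled above. Using Serre duality $\D\Ext^1_\X(S,S')\cong\Hom_\X(S',\tau S)$ together with $S(\vec{x})\cong S$ for ordinary simples and the (easily computed) fact that $\tau$ cyclically permutes $S_{i,1},\dots,S_{i,p_i}$, one checks that $\Hom_\X(S,S')=\Ext^1_\X(S,S')=0$ whenever $S$ and $S'$ are simple sheaves supported at distinct points of $\P^1$. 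Hence $\coh_0\X=\coprod_{\mu\in\P^1}\cU_\mu$, where $\cU_\mu$ denotes the ($\tau$-stable, abelian) subcategory of finite length sheaves all of whose composition factors are supported at $\mu$; the Hom- and Ext-orthogonality then forces each connected component of the Auslander--Reiten quiver of $\coh_0\X$ to lie in a single $\cU_\mu$, and the components over distinct points to be pairwise orthogonal. It thus remains to analyse one $\cU_\mu$ at a time.

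Finally, for a fixed $\mu$ I would localize $S(\mathbf{p},\lam)$ by inverting all homogeneous elements not vanishing at $\mu$ and identify $\cU_\mu$ with a category of finite length graded modules over the resulting local graded ring. If $\mu$ is ordinary this ring is, once the grading is unravelled, a complete discrete valuation ring with residue field $k$, so $\cU_\mu\simeq\mmod_0 k[[s]]$; this category is uniserial and standard, with Auslander--Reiten quiver the homogeneous ($\operatorname{rk}=1$) tube whose unique quasi-simple is $S_\mu$. If $\mu=\lambda_i$ then $x_i$ has degree $\vec{x}_i$, an element of order $p_i$ in $L(\mathbf{p})$, while $x_i^{p_i}$ is a uniformizer up to a unit; the localized graded module category is then equivalent to the category of finite dimensional nilpotent representations of the cyclic quiver with $p_i$ vertices, which is uniserial and standard and whose Auslander--Reiten quiver is a standard tube of rank $p_i$ with quasi-simples $S_{i,1},\dots,S_{i,p_i}$. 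Assembling the two cases gives: each $\mt_\mu$ is a standard tube, the $\mt_\mu$ are pairwise orthogonal, $\coh_0\X$ is uniserial, and $\operatorname{rk}\mt_\mu=1=p_{\lam}(\mu)$ at an ordinary point while $\operatorname{rk}\mt_{\lambda_i}=p_i=p_{\lam}(\lambda_i)$. The one genuinely substantial step is this last identification at an exceptional point: one must set up the localization compatibly with the $L(\mathbf{p})$-grading, recognise the local ring as the appropriate $\Z/p_i$-graded completion of $k[[x_i]]$, and match its $p_i$ indecomposable quasi-simple modules with the sheaves $S_{i,j}$. Everything else — the Serre-subcategory reduction, the $\tau$-stability, and the ordinary-point case (the classical classification of finite length modules over a DVR) — is routine.
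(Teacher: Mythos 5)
This lemma is quoted in the paper directly from Geigle--Lenzing \cite{GL1} with no proof given, so there is no in-paper argument to compare against; your proposal is a correct reconstruction of essentially the argument of that reference (Serre-subcategory formalism for exactness and $\tau$-stability, Hom/Ext-orthogonality of simples at distinct points to split $\coh_0\X$ into a coproduct, and localization at each point yielding finite length modules over a DVR at ordinary points and nilpotent representations of a cyclic quiver with $p_i$ vertices at $\lambda_i$). The only step you leave compressed --- the grading-compatible identification of the local category at an exceptional point with cyclic-quiver representations --- is exactly the substantive computation in \cite{GL1}, and you have correctly isolated it as such.
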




\subsection{Classifications of weighted projective line} The genus $g_{\X}$ of a weighted projective line $\X$ is defined by $$g_{\X}=1+\frac{1}{2}((t-2)p-\Sigma_{i=1}^tp/p_i).$$
A weighted projective line of genus $g_\X<1$($g_{\X}=1$, resp. $g_{\X}>1$) will be called of {\it domestic}({\it tubular}, resp. {\it wild}) type.

The domestic weight types are, up to permutation, $(q)$ with $q\ge 1$, $(q_1,q_2)$ with $q_1,q_2\ge 2$, $(2,2,n)$ with $n\ge 2$, $(2,3,3),(2,3,4),(2,3,5)$,  whereas the tubular weight types are, up to permutation, $(2,2,2,2),(3,3,3),(2,4,4)$ and $(2,3,6)$.

\subsection{Slopes of coherent sheaves} Denote by $K_0(\X)$ the the Grothendieck group of $\coh\X$, it is a free abelian group of rank $n=2+\Sigma_{i=1}^t(p_i-1)$. Recall from ~\cite{GL1}, there are two $\Z$-linear forms, rk and $\deg$ on $K_0(\X)$, called the {\it rank} and {\it degree}. In particular,  for any $\vec{x}\in L(\bf{p})$,
 $$\rank \co(\vec{x})=1, \deg\co(\vec{x})=\delta(\vec{x})$$ with $\delta:L(\mathbf{p})\ra \Z$ defined by $\delta(\vec{x}_i)=p/p_i$ where  $p=\lcm(p_1,\cdots,p_t)$ is the least common multiple of $\bf{p}$. 
For each object $E\in\coh\X$, the  {\it slope} of $E$ is defined as $$\mu E:=\deg E/\rank E{\in \mathbb{Q}\cup \{\infty\}.}$$ 
{It is obvious that $ \mu (\co(\vec{x}))=\delta(\vec{x})$.} By \cite[Lemma 2.5]{L},  for each vector bundle $E\in \coh\X$ and  $\vec{x}\in L(\bf{p})$, we have $$\mu (E(\vec{x}))=\mu E+\delta(\vec{x}).$$

An indecomposable object $E\in\coh\X$ is called {\it semistable} if for each non-trivial subbundle $E'$ of $E$, we have $\mu E'\leq   \mu E$.
 For each $q\in\Q\cup\{\infty\}$, we denote by $\mc_q$ the full subcategory of $\coh\X$ consisting of all semistable coherent sheaves  of slope $q$. In particular,   $\mc_{\infty}$ is the full subcategory consisting of the objects of finite length. For $q,q'\in\Q\cup\{\infty\}$ such that $q<q'$,  according to \cite[Proposition 5.2]{GL1}, we know that
$\Hom_{\X}(\mc_q',\mc_{q})=0$. Hence, for any objects $X\in \mc_{q}, Y\in \mc_{q'}$, we have $\Ext^1_{\X}(X,Y)\cong\D\Hom_{\X}(Y,\tau X)=0$ since $\tau X$ is still in $\mc_{q}$.

\subsection{Tilting sheaf and mutation}
A sheaf $M\in\coh\X$ is called {\it rigid} if $\Ext^1_{\X}(M,M)=0$.  A sheaf  $T\in\coh\X$ is  a {\it tilting sheaf}  if $T$ is rigid and for any object $X\in\coh\X$ the condition $\Hom_{\X}(T,X)=0=\Ext_{\X}^1(T,X)$ implies that $X=0$. Moreover, if $T$ is a vector bundle, \ie $T$ has no direct summand of finite length, then we call $T$ a {\it tilting bundle}.
It is known the canonical tilting sheaf  $T_{can}=\bigoplus\limits_{0\leq \vec{x}\leq \vec{c}}\co(\vec{x})$  is a tilting bundle in $\coh\X$.

 Note that the number of (pairwise non-isomorphic) indecomposable direct summands of any tilting sheaf equals the rank $n=2+\Sigma_{i=1}^t(p_i-1)$ of the Grothendieck group $K_0(\X)$.
A rigid sheaf with $n-1$ (pairwise non-isomorphic) indecomposable direct summands is called an {\it almost complete tilting sheaf}. An indecomposable sheaf $E'$ is called a {\it complement} of the almost complete tilting sheaf $E$ if $E \oplus E'$ is a tilting sheaf.

\begin{df}
Let $T=\overline{T}\oplus X$ and $T'=\overline{T}\oplus X^*$ be two tilting objects in $\coh\X$ where $X$ and $X^*$ are indecomposable. We call $T'$ is the mutation of $T$ at $X$, denoted by $T'=\mu_X(T)$.
\end{df}

By \cite{Hu}, each almost tilting sheaf in $\coh\X$ have precisely two complements.
{In particular, for a given tilting sheaf $T$, we may mutate $T$ at any indecomposable direct summand of $T$ to obtain a new tilting sheaf.}
 Moreover, we have
\begin{lem}\cite[Proposition 2.6, 2.8]{Hu}\label{Hu}
Let $T=T_k\oplus \overline{T}$ be a tilting sheaf in $\coh\X$ with $T_k$ an indecomposable direct summand. Let $T_k\xra{f_k} B$ {a} minimal left $\add \overline{T}$-approximation and $B'\xra{g_k} T_k$ {a} minimal right  $\add \overline{T}$-approximation.
 Then
 \begin{itemize}
 \item[(1)] $f_k$ (resp. $g_k$) is either mono or epi.
  \item[(2)] $f_k $ is  mono (resp. epi) iff $g_k$ is mono (resp. epi).
  \item[(3)] let $T_k^*=\ker g_k\oplus \cok f_k$, then $T_k^*\oplus\overline{T}$ is again a tilting sheaf in $\coh\X$.
 \end{itemize}
\end{lem}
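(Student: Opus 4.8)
The plan is to work throughout in the hereditary abelian category $\coh\X$, using only that it is hereditary (so $\Ext^i_{\X}=0$ for $i\ge 2$), that all $\Hom_{\X}$ and $\Ext^1_{\X}$ spaces are finite dimensional (so $\add\overline{T}$ is functorially finite and the minimal approximations $f_k,g_k$ exist and are unique up to isomorphism), and the fact recalled just above the statement that the almost complete tilting sheaf $\overline{T}$ has precisely two complements. Since $T_k$ is indecomposable and $T_k\notin\add\overline{T}$, neither $f_k$ nor $g_k$ is an isomorphism. I would record the four-term exact sequence attached to $f_k$,
$$0\to K\to T_k\xra{f_k}B\to C\to 0,\qquad K=\ker f_k,\ C=\cok f_k,\ U=\im f_k,$$
split it into $0\to K\to T_k\to U\to 0$ and $0\to U\to B\to C\to 0$, and treat $g_k$ by the dual sequence.

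For part (1) I would first extract the basic vanishings. Applying $\Hom_{\X}(-,\overline{T})$ to the two short exact sequences and using $\Ext^1_{\X}(\overline{T},\overline{T})=0$, $\Ext^1_{\X}(T_k,\overline{T})=0$, $\Ext^2_{\X}=0$, together with the defining surjectivity of $\Hom_{\X}(B,\overline{T})\to\Hom_{\X}(T_k,\overline{T})$ for the left approximation $f_k$, yields $\Ext^1_{\X}(U,\overline{T})=0$ and hence $\Hom_{\X}(K,\overline{T})=0=\Ext^1_{\X}(K,\overline{T})$, and dually $\Ext^1_{\X}(C,\overline{T})=0=\Ext^1_{\X}(\overline{T},C)$. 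The cleanest way to phrase the mono-or-epi dichotomy is then to pass to $\md^b(\X)$: since $\coh\X$ is hereditary, every object of $\md^b(\X)$ is the direct sum of its shifted cohomologies, and the cone of $f_k$ is $K[1]\oplus C$. Thus $f_k$ being mono or epi is exactly the assertion that this cone is concentrated in a single cohomological degree, i.e.\ that $K$ and $C$ are not both nonzero. Note $\Hom_{\X}(K,\overline{T})=0$ and minimality of $f_k$ already guarantee that neither $K$ nor $C$ has a nonzero summand in $\add\overline{T}$; the remaining content is that the two surviving shifted cohomologies of the cone cannot coexist as genuine completions of $\overline{T}$, which I would deduce from the two-complement fact (equivalently, by showing that minimality plus rigidity force the cone of $f_k$ to be indecomposable in $\md^b(\X)$). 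The corresponding statement for $g_k$ follows by the dual argument via Serre duality.

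For parts (2) and (3) I would run the mono case and obtain the epi case dually. Suppose $f_k$ is mono, so the exchange sequence reads $0\to T_k\xra{f_k}B\to C\to 0$ with $C=\cok f_k$. In this clean case the vanishings above, supplemented by the lifting of maps $T_k\to C$ through $B$ (obstruction in $\Ext^1_{\X}(T_k,T_k)=0$), show that $\overline{T}\oplus C$ is rigid; being maximal rigid on $n=\rank K_0(\X)$ summands it is a tilting sheaf, $C$ is indecomposable, and so $C\cong T_k^*$, the second complement. Applying $\Hom_{\X}(T_k,-)$ to the exchange sequence and using $\Ext^1_{\X}(T_k,T_k)=0$, $\Ext^1_{\X}(T_k,B)=0$ and $\Ext^2_{\X}=0$ gives the decisive vanishing $\Ext^1_{\X}(T_k,T_k^*)=0$. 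Now if $g_k$ were epi, its kernel would be a complement of $\overline{T}$ distinct from $T_k$, hence isomorphic to $T_k^*$, and the nonsplit sequence $0\to T_k^*\to B'\xra{g_k}T_k\to 0$ would force $\Ext^1_{\X}(T_k,T_k^*)\ne 0$, a contradiction. By (1), $g_k$ is therefore mono, so $\ker g_k=0$ and $T_k^*=\ker g_k\oplus\cok f_k=\cok f_k$; this establishes (2) and (3) in the mono case. The epi case is obtained by the symmetric computation, applying $\Hom_{\X}(-,T_k)$ to $0\to\ker g_k\to B'\xra{g_k}T_k\to 0$ to get $\cok f_k=0$ and $T_k^*=\ker g_k$.

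The main obstacle is the mono-or-epi dichotomy in part (1): ruling out that $f_k$ is simultaneously non-mono and non-epi. Everything else reduces to bookkeeping with long exact sequences, but the dichotomy genuinely requires combining minimality of the approximation, rigidity of $\overline{T}$, the hereditary vanishing $\Ext^2_{\X}=0$, and the uniqueness of complements of $\overline{T}$; the $\md^b(\X)$ reformulation (the cone $K[1]\oplus C$ must be concentrated in one degree, i.e.\ be indecomposable) is what makes this obstruction transparent, and it is precisely at that point that the self-extension of the non-split cohomology cannot be controlled by the elementary sequences alone.
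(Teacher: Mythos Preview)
The paper does not supply its own proof of this lemma; it is quoted verbatim from H\"ubner's dissertation \cite[Proposition~2.6, 2.8]{Hu}. So there is no argument in the paper to compare against, and your task was effectively to reconstruct H\"ubner's proof from scratch.

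Your treatment of (2) and (3), assuming (1), is correct and is the standard exchange-sequence argument: once $f_k$ is mono you verify that $C=\cok f_k$ is rigid, has no summand in $\add\overline{T}$, and satisfies $\Ext^1_{\X}(C,\overline{T})=0=\Ext^1_{\X}(\overline{T},C)$, hence is a complement of $\overline{T}$ and so $C\cong T_k^*$; the vanishing $\Ext^1_{\X}(T_k,T_k^*)=0$ then rules out $g_k$ being non-mono, giving (2) and (3).

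The genuine gap is exactly where you flag it, in (1). You assert that the two-complement fact will finish the job but do not carry this out, and the phrase ``the two surviving shifted cohomologies of the cone cannot coexist as genuine completions of $\overline{T}$'' is not yet an argument; nor is the bare claim that the cone is indecomposable. Here is how to close it with the ingredients you already have. Your computations show, \emph{without} assuming $K=0$, that $\Ext^1_{\X}(C,C)=0$, $\Ext^1_{\X}(C,\overline{T})=0=\Ext^1_{\X}(\overline{T},C)$, and that $C$ has no summand in $\add\overline{T}$; hence if $C\neq 0$ then $C$ is a complement and $C\cong T_k^*$ (the case $C\cong T_k$ is excluded since $\Ext^1_{\X}(T_k,U)=0$ would split $0\to U\to B\to C\to 0$ and place $T_k$ in $\add\overline{T}$). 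Now suppose also $K\neq 0$. You have $\Hom_{\X}(K,\overline{T})=0=\Ext^1_{\X}(K,\overline{T})$, and chasing $\Hom_{\X}(K,-)$ through the two short exact sequences (using $\Hom_{\X}(K,B)=0$, $\Ext^1_{\X}(K,K)=0$, $\Ext^1_{\X}(K,T_k)=0$, all of which you have) yields $\Ext^1_{\X}(K,U)=0$ and then $\Hom_{\X}(K,C)=0=\Ext^1_{\X}(K,C)$. Thus $K$ lies in ${}^\perp(T_k^*\oplus\overline{T})$. Since $T_k^*\oplus\overline{T}$ is tilting and $\coh\X$ has Serre duality, the left perpendicular of a tilting sheaf is zero, forcing $K=0$, a contradiction. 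This is precisely how the two-complement fact enters: one needs not merely that $\overline{T}$ has two complements, but that the \emph{second} complement assembles into a tilting sheaf whose perpendicular category detects $K$. Your derived-category ``indecomposable cone'' reformulation is equivalent a posteriori, but does not by itself produce this missing vanishing.
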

{As a direct consequence, we obtain}
\begin{lem}\label{induce}
Let $\overline{T}$ be an almost tilting sheaf in $\coh\X$, $T_k$ and $T_k^*$ are two complements of $\overline{T}$. Then either there is an exact sequence  $0\ra T_k\xra{f} B\xra{} T_k^*\ra 0$ or an exact sequence $0\ra T_k^*\ra B'\xra{g} T_k\ra 0$, where $f$ is the minimal left $\add \overline{T}$-approximation and $g$ is the minimal right $\add \overline{T}$-approximation.
Moreover,  if $\mu (T_k)<\mu (T_k^*)$,   the exact sequence $0\ra T_k\ra B\xra{g} T_k^*\ra 0$   holds.
\end{lem}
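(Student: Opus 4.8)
The plan is to deduce the statement directly from Lemma~\ref{Hu}. Since $\overline{T}$ is an almost complete tilting sheaf and $T_k$ is a complement, write $T = T_k \oplus \overline{T}$ and apply Lemma~\ref{Hu}: let $f\colon T_k \to B$ be the minimal left $\add\overline{T}$-approximation and $g\colon B' \to T_k$ the minimal right $\add\overline{T}$-approximation. By part (1) of Lemma~\ref{Hu}, $f$ is either mono or epi, and by part (2), $f$ is mono exactly when $g$ is mono. If $f$ is mono, then $\cok f$ is the unique second complement $T_k^*$ (here one must check that $\ker g = 0$ in this case, which follows from part (2) and the fact that $B' \xra{g} T_k$ being mono would force $g$ to be an isomorphism since $T_k$ is indecomposable and $g$ is a right approximation with $T_k \notin \add\overline{T}$ — so in fact $f$ mono forces $g$ epi, not mono; I should instead argue that exactly one of $f,g$ is mono and the other epi), giving the exact sequence $0 \to T_k \xra{f} B \to T_k^* \to 0$. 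Dually, if $f$ is epi then $g$ is mono and we get $0 \to T_k^* \to B' \xra{g} T_k \to 0$. The uniqueness of the second complement (guaranteed by \cite{Hu}, as recalled before Lemma~\ref{Hu}) ensures that the cokernel (resp. kernel) obtained here really is $T_k^*$ and not some other sheaf.

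For the last sentence, suppose $\mu(T_k) < \mu(T_k^*)$ and suppose for contradiction that it is the sequence $0 \to T_k^* \to B' \xra{g} T_k \to 0$ that holds, i.e. $g$ is epi and mono is ruled out. Then $T_k^*$ is a subsheaf of $B'$, and $B' \in \add\overline{T}$. Here I would compare slopes: $B'$ is a direct sum of indecomposable summands of $\overline{T}$, and the slope of any subsheaf is controlled via semistability. More precisely, in the tubular case every indecomposable sheaf is semistable (this is the key structural input for tubular type, established in Section~\ref{properties of tubular type}), so $\Hom_\X(\mc_{q'}, \mc_q) = 0$ for $q' > q$. If $\mu(T_k^*) > \mu(T_k)$, then since the surjection $B' \to T_k$ is nonzero, $B'$ must have an indecomposable summand of slope $\le \mu(T_k) < \mu(T_k^*)$; but the injection $T_k^* \hookrightarrow B'$ is also nonzero, forcing some summand of $B'$ to have slope $\ge \mu(T_k^*)$. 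This by itself is not yet a contradiction, so the argument needs the minimality of the approximations: a nonzero map from $T_k^*$ (slope $> \mu T_k$) into $B'$ composed with $g$ onto $T_k$ (slope $< \mu T_k^*$) is zero by the $\Hom$-vanishing, consistent with $g\circ(\text{inclusion}) = 0$, so I would instead look at the left approximation $f\colon T_k \to B$: this is nonzero (as $T_k$ is not a summand of $\overline T$ and $T$ is a tilting object, so $\Hom_\X(T_k,\overline T)$ must be nonzero — otherwise $\overline T$ would be a tilting sheaf), hence $B$ has a summand of slope $\ge \mu(T_k)$, and $\cok f = T_k^*$ then has slope $\ge \mu(T_k)$ compatibly; the mono case is the one consistent with $\mu T_k < \mu T_k^*$. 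To finish cleanly I would rule out the epi case for $f$: if $f$ were epi, $T_k \twoheadrightarrow B$ with $T_k^* = \ker f$, then $T_k^*$ is a subsheaf of $T_k$, so $\mu(T_k^*) \le \mu(T_k)$ by semistability of $T_k$, contradicting $\mu T_k < \mu T_k^*$. Hence $f$ is mono, $g$ is epi, and the sequence $0 \to T_k \to B \to T_k^* \to 0$ holds.

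The main obstacle I anticipate is bookkeeping around the two approximations and correctly matching "mono/epi for $f$" with "epi/mono for $g$" as dictated by Lemma~\ref{Hu}(2) together with the indecomposability of $T_k$ and $T_k^*$ — in particular verifying that a mono right approximation $g\colon B' \to T_k$ cannot occur here (it would have to be split, contradicting that $T_k \notin \add\overline T$), so that "$f$ mono" is genuinely equivalent to "$g$ epi". Once that dictionary is pinned down, the slope statement is immediate from semistability of indecomposables in the tubular case: in the mono-$f$ short exact sequence $T_k^* = \cok f$ receives a surjection from $B$ but more usefully $T_k$ is a subsheaf, and in the epi-$f$ alternative $T_k^*$ is a subsheaf of $T_k$ forcing $\mu T_k^* \le \mu T_k$, which is exactly what is excluded by hypothesis.
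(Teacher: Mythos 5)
Your overall plan (read the dichotomy off Lemma~\ref{Hu}, then use slopes and semistability to exclude one branch) is the right one, but two of your key steps are wrong as written. First, your ``corrected'' dictionary ``$f$ mono $\Leftrightarrow$ $g$ epi'' contradicts Lemma~\ref{Hu}(2), and the premise you use to justify it --- that a mono right $\add\overline{T}$-approximation $g\colon B'\to T_k$ would have to be split --- is false: a right approximation that is mono need not hit a direct summand; for instance $g$ is the monomorphism $0\to T_k$ whenever $\Hom_{\X}(\overline{T},T_k)=0$. The correct reading of Lemma~\ref{Hu} is: if $f$ and $g$ are both mono then $\ker g=0$ and $T_k^*=\cok f$, giving $0\to T_k\to B\to T_k^*\to 0$; if both are epi then $\cok f=0$ and $T_k^*=\ker g$, giving $0\to T_k^*\to B'\to T_k\to 0$. (With your dictionary, $T_k^*=\ker g\oplus\cok f$ would have two nonzero summands, contradicting its indecomposability.) This dichotomy is the first assertion of the lemma, and it really is a direct consequence --- the paper offers no further argument.

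Second, your clinching step for the ``moreover'' part identifies $T_k^*$ with $\ker f$ and concludes $T_k^*\subseteq T_k$, whence $\mu T_k^*\le\mu T_k$. But in the epi branch $T_k^*=\ker g$ is a subsheaf of $B'\in\add\overline{T}$, not of $T_k$, so this step collapses (in the degenerate case $B=0$ one even has $\ker f=T_k\neq T_k^*$). You in fact spotted the correct obstruction in your middle paragraph --- some summand of $B'$ must have slope $\ge\mu T_k^*$ and some must have slope $\le\mu T_k$ --- but abandoned it as ``not yet a contradiction.'' The missing ingredient is right-minimality of $g$: no indecomposable summand of $B'$ is annihilated by $g$, so \emph{every} summand of $B'$ admits a nonzero map to $T_k$ and hence (all indecomposables being semistable in the tubular case) has slope $\le\mu T_k$; on the other hand $T_k^*\hookrightarrow B'$ forces some summand of slope $\ge\mu T_k^*>\mu T_k$, a contradiction. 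Alternatively and more cheaply: $0\to T_k^*\to B'\to T_k\to 0$ cannot split (else $T_k\in\add\overline{T}$), so it is a nonzero element of $\Ext^1_{\X}(T_k,T_k^*)\cong\D\Hom_{\X}(T_k^*,\tau T_k)$, which vanishes when $\mu T_k<\mu T_k^*$ by the $\Hom$-vanishing between semistable classes recalled in Section~\ref{S:basic-notions}. Either repair closes the gap; note that both rely on semistability of all indecomposables and hence on $\X$ being tubular, which is the only setting in which the paper invokes this part of the lemma.
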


 \section{Tubular  type}\label{properties of tubular type}
 Let $\X$ be a tubular weighted projective line with weight sequence $(p_1,\cdots,p_t)$, set $p:=\lcm(p_1,\cdots,p_t)$. 

\subsection{Shape of tubular hereditary category} Let $\X$ be a tubular weighted projective line. By~\cite{GL1}, each indecomposable coherent sheaf in $\coh\X$ is semistable. Moreover, we have
\begin{thm}~\cite[Theorem 5.6]{GL1}
For each $q\in\Q\cup\{\infty\}$,  $\mc_q$ is equivalent to $\coh_0\X$, \ie  {the} components of the Auslander-Reiten quiver of $\mc_q$ form a family of  pairwise orthogonal standard tubes $(\mt_{\mu})_{\mu\in\P^1}$ with rank
 $rk \mt_{\mu} = p_{\lambda}(\mu)$.
\end{thm}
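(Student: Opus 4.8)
The plan is to reduce an arbitrary slope $q\in\Q\cup\{\infty\}$ to the case $q=\infty$, which is already settled by Lemma~\ref{shape of coh_0 X} since $\mc_\infty=\coh_0\X$. First I would observe that, because every indecomposable sheaf on a tubular $\X$ is semistable and $\Hom_\X(\mc_{q'},\mc_q)=0$ whenever $q<q'$, each $\mc_q$ is an abelian subcategory of $\coh\X$ closed under extensions --- a standard consequence of heredity together with the vanishing of homomorphisms between semistable objects of distinct slopes --- so that the statement to prove is precisely that $\mc_q$, with its Auslander--Reiten structure, is equivalent to $\coh_0\X$. The strategy is then to carry $\mc_q$ onto $\mc_\infty$ by autoequivalences of $\md^b(\X)$.

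On the numerical side, $(\rank,\deg)$ induces a surjection of $K_0(\X)$ onto $\Z^2$ modulo the radical of the Euler form, under which the slope $q=\deg/\rank$ becomes the corresponding point of $\P^1(\Q)=\Q\cup\{\infty\}$. The degree-twist autoequivalences $E\mapsto E(\vec x)$ act on $(\rank,\deg)$ by $(r,d)\mapsto(r,d+\delta(\vec x)\,r)$, hence on $\P^1(\Q)$ by the translation $q\mapsto q+\delta(\vec x)$; inspecting the four tubular weight types $(2,2,2,2)$, $(3,3,3)$, $(2,4,4)$, $(2,3,6)$ shows that $\delta\colon L(\mathbf p)\to\Z$ is surjective, so every integer translation of $\P^1(\Q)$ is realized by an exact equivalence $\mc_q\iso\mc_{q+n}$. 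To alter the denominator of $q$ as well, I would bring in one further self-equivalence $\Phi$ of $\md^b(\X)$ --- a \emph{tubular mutation}, arising for instance from a tilting bundle in $\coh\X$ whose endomorphism algebra is again derived equivalent to $\coh\X$ --- whose action on $(\rank,\deg)\in\Z^2$ is the matrix $\begin{pmatrix}0&-1\\1&0\end{pmatrix}$, i.e. $q\mapsto-1/q$ on $\P^1(\Q)$, interchanging $0$ and $\infty$. The point that requires genuine work is that $\Phi$, although it may send the standard heart $\coh\X$ to a tilted heart, nonetheless carries a semistable sheaf of slope $q$ to a semistable object of slope $-1/q$, so that it restricts to an exact equivalence $\mc_q\iso\mc_{-1/q}$; this is where the genus-one geometry is used essentially. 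The translations together with $q\mapsto-1/q$ generate the full $\mathrm{SL}_2(\Z)$-action on $\P^1(\Q)$, which is transitive: if $q=a/b$ in lowest terms and $c,d\in\Z$ satisfy $ad-bc=1$, then $\gamma=\begin{pmatrix}a&c\\b&d\end{pmatrix}\in\mathrm{SL}_2(\Z)$ sends $\infty$ to $q$.

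It remains to assemble these facts. Given $q$, realize $\gamma^{-1}$ with $\gamma$ as above by a composite $\Psi$ of degree twists and copies of $\Phi$; then $\Psi$ restricts to an exact equivalence $\mc_q\iso\mc_\infty=\coh_0\X$. Lemma~\ref{shape of coh_0 X} then transports back to $\mc_q$: its Auslander--Reiten quiver is a family of pairwise orthogonal standard tubes, and the tube of $\mc_q$ mapped by $\Psi$ to a tube $\mt_\mu$ of rank $p_\lambda(\mu)$ in $\coh_0\X$ has the same rank, so the ranks of the tubes of $\mc_q$ are again $(p_\lambda(\mu))_{\mu\in\P^1}$ (the finitely many tubes of rank $>1$, corresponding to the exceptional simple sheaves, being matched by the equivalence). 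The main obstacle is the middle step: constructing the self-equivalence realizing $q\mapsto-1/q$ and verifying its compatibility with the slope filtration, i.e. establishing the $\mathrm{SL}_2(\Z)$-symmetry of the tubular category $\coh\X$. Once that is available, the reduction to $\coh_0\X$ is formal.
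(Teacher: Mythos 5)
The paper offers no proof of this statement at all: it is imported verbatim as \cite[Theorem 5.6]{GL1}, so there is no internal argument to compare yours against, and I can only judge your sketch on its own terms. Your overall strategy --- transport $\mc_q$ onto $\mc_\infty=\coh_0\X$ by autoequivalences of $\md^b(\X)$ realizing an $\SL(2,\Z)$-action on slopes, then invoke Lemma~\ref{shape of coh_0 X} --- is the now-standard Lenzing--Meltzer route, and the paper itself later records exactly this transitivity as Theorem~\ref{telescopic functor} (citing [M2, Theorem 5.2.6]). The peripheral steps you carry out are fine: $\delta$ is surjective onto $\Z$ for each of the four tubular weight types, line-bundle twists are exact autoequivalences of $\coh\X$ and hence restrict to equivalences $\mc_q\iso\mc_{q+\delta(\vec{x})}$, $\SL(2,\Z)$ acts transitively on $\P^1(\Q)$, and an exact equivalence of the abelian categories $\mc_q$ transports the Auslander--Reiten quiver together with the ranks of its tubes.

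The genuine gap is the one you flag yourself and then defer: the existence of a self-equivalence $\Phi$ acting on slopes by $q\mapsto -1/q$ \emph{and} carrying semistable sheaves of slope $q$ to semistable objects of slope $-1/q$ is not an auxiliary lemma --- in this approach it \emph{is} the theorem, and your argument reduces the statement to an assertion at least as hard as the one being proved. There is also a circularity risk: the standard constructions of such tubular mutations (shrinking functors at a separating tubular family, or tilting to another canonical algebra) verify compatibility with semistability by using the decomposition of $\ind\X$ into the families $(\mc_q)_{q\in\Q\cup\{\infty\}}$, i.e.\ precisely Theorem 5.6; so unless you build $\Phi$ and check the semistability statement independently, the argument assumes its conclusion. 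Geigle--Lenzing's original proof avoids this entirely by a different route: $\End(T_{can})$ is a tubular canonical algebra in Ringel's sense, and the structure of $\coh\X$ is deduced from Ringel's classification of modules over tubular algebras via the derived equivalence, with no slope-inverting functor required. To make your proposal complete you would either have to reproduce that argument, or construct $\Phi$ explicitly (say, as a composition of reflection/universal-extension functors at $\mc_\infty$, whose tubular structure is already secured by Lemma~\ref{shape of coh_0 X}) and prove from first principles that it preserves semistability and transforms slopes as claimed --- and that verification is where essentially all of the work of the theorem lives.
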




  In the following of this note,  for an indecomposable object $Z\in\coh\X$,  we denote by $\mt_Z$ the tube where $Z$ lies in, $\mw_Z$ the {\it wing} determined by $Z$,   $q.l. Z$ the {\it quasi-length} of $Z$.  If $q.l. Z=1$, we say that $Z$  is {\it quasi-simple}. We call the smallest positive integer $d$ such that $\tau^d Z=Z$ the {\it $\tau$-period} of $Z$.
{A tube with rank one is called a {\it homogeneous tube}.}

{It is known that every line bundle is quasi-simpe with integer slope and the $\tau$-period of a line bundle is $p$.} Let $X$ be an indecomposable object of $\coh\X$ lying in a  tube  with rank $r$, it is clear that
$X$ is rigid if and only if $q.l. X\leq r-1$.


\subsection{Riemann-Roch theorem}
For a weighted projective line $\X$ of arbitrary type, the {\it Euler form} on $K_0(\X)$ is given on  sheaves by $\chi(X,Y)=\dim_k\Hom_{\X}(X,Y)-\dim_k\Ext_{\X}^1(X,Y)$. The weighted form of {\it Riemann-Roch theorem} (\cite[2.9]{GL1}, \cite[3.1.14]{M2}) states that 
$$\overline{\chi}(X,Y)=rk(X)rk(Y)(p(1-g_\X))+(\mu Y-\mu X)$$
where $\overline{\chi}(X,Y)=\Sigma_{j=0}^{p-1}\chi(\tau^j X,Y)$ is the averaged Euler form.
In particular, if $\X$ is of tubular type, then  $$\overline{\chi}(X,Y)=\mu Y-\mu X.$$
Hence, one can get:
\begin{lem}\cite[Proposition 3.2]{L}\label{cor got by Riemann-Roch  theorem}
Let $\X$ be a tubular weighted projective line. For $q,q'\in\Q\cup \infty$ and $q\neq q'$, $\Hom_{\X}(\mc_q,\mc_{q'})\neq 0$ if and only if $q<q'$.
\end{lem}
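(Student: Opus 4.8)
The plan is to deduce Lemma~\ref{cor got by Riemann-Roch theorem} directly from the Riemann-Roch formula for tubular type together with the vanishing statement already recorded in the excerpt. Recall from Section~2.3 that for $q<q'$ one has $\Hom_{\X}(\mc_{q'},\mc_q)=0$, so in particular $\Hom_{\X}(\mc_q,\mc_{q'})$ is the only possibly nonzero $\Hom$-space between two distinct slopes, and moreover $\Ext^1_{\X}(X,Y)=0$ for $X\in\mc_q$, $Y\in\mc_{q'}$ with $q<q'$. Combining this with the specialization of the averaged Euler form to tubular type, namely $\overline{\chi}(X,Y)=\mu Y-\mu X$, will give both directions.

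First I would prove the ``if'' direction: suppose $q<q'$ and take indecomposable $X\in\mc_q$, $Y\in\mc_{q'}$. Since $\tau$ preserves semistability and slope, each $\tau^j X$ still lies in $\mc_q$, so $\Ext^1_{\X}(\tau^j X,Y)\cong\D\Hom_{\X}(Y,\tau^{j+1}X)=0$ because $\Hom_{\X}(\mc_{q'},\mc_q)=0$. Hence $\chi(\tau^j X,Y)=\dim_k\Hom_{\X}(\tau^j X,Y)\ge 0$ for every $j$, and therefore
\[
\sum_{j=0}^{p-1}\dim_k\Hom_{\X}(\tau^j X,Y)=\overline{\chi}(X,Y)=\mu Y-\mu X>0 .
\]
Thus some $\Hom_{\X}(\tau^j X,Y)$ is nonzero, and since $\tau^j X\in\mc_q$ this shows $\Hom_{\X}(\mc_q,\mc_{q'})\neq 0$. (If one wants a statement about $X$ itself rather than some $\tau$-shift, one notes that the $\tau$-orbit lies in the same tube and repeats the argument within that tube; but for the lemma as stated, the category-level conclusion suffices.)

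For the ``only if'' direction, suppose $q\neq q'$ and $q>q'$; then by the Hom-vanishing $\Hom_{\X}(\mc_q,\mc_{q'})=0$ directly, contradicting the hypothesis. Hence $\Hom_{\X}(\mc_q,\mc_{q'})\neq 0$ forces $q<q'$. The only mild subtlety — and the one place I would be careful — is checking the nonnegativity of each individual $\chi(\tau^j X,Y)$, i.e.\ that the relevant $\Ext^1$ really does vanish termwise rather than merely the alternating sum being positive; this is exactly where one uses that $\tau$ fixes the slope (so $\tau^j X\in\mc_q$ again) together with the Serre duality $\Ext^1_{\X}(E,F)\cong\D\Hom_{\X}(F,\tau E)$ and $\Hom_{\X}(\mc_{q'},\mc_q)=0$. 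Everything else is a direct substitution into $\overline{\chi}(X,Y)=\mu Y-\mu X$, so there is no real obstacle beyond keeping track of which slope class each $\tau$-twist belongs to.
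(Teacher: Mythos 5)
Your proposal is correct and follows essentially the same route as the paper: both prove the ``only if'' direction from the known vanishing $\Hom_{\X}(\mc_q,\mc_{q'})=0$ for $q>q'$, and both prove the ``if'' direction by computing $\overline{\chi}(X,Y)=\mu Y-\mu X$ and using Serre duality together with $\tau$-invariance of $\mc_q$ to kill each $\Ext^1(\tau^j X,Y)$, so that the averaged Euler form reduces to a sum of $\Hom$-dimensions. The one subtlety you flag --- termwise vanishing of the $\Ext^1$'s rather than mere positivity of the alternating sum --- is exactly the point the paper's proof also relies on.
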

\begin{proof}
If $q>q'$, it is  known that $\Hom_{\X}(\mc_q,\mc_{q'})= 0$. Hence, if 
 $\Hom_{\X}(\mc_q,\mc_{q'})\neq 0$ and $q\neq q'$, we must have $q<q'$.

Now suppose that $q<q'$.
Let $X\in\mc_q,Y\in\mc_{q'}$ be two indecomposable sheaves. Then we have $$\overline{\chi}(X,Y)=\mu Y-\mu X=q'-q\neq 0.$$
Since for any integer $j$, $\tau^j X\in\mc_q$,  $\Ext^1_{\X}(\tau^j X,Y)\cong\D\Hom_{\X}(Y,\tau^{j+1}X)=0$, then
\begin{eqnarray*}0\neq \overline{\chi}(X,Y)=\Sigma_{j=0}^{p-1}\chi(\tau^j X,Y)&=&\Sigma_{j=0}^{p-1}(\dim_k\Hom_{\X}(\tau^j X,Y)-\dim_k\Ext^1_{\X}(\tau^j X,Y))\\
&=&\Sigma_{j=0}^{p-1}\dim_k\Hom_{\X}(\tau^j X,Y).
\end{eqnarray*}
 Therefore, $\Hom_{\X}(\mc_q,\mc_{q'})\neq 0$ since $\tau^j X\in\mc_q$ for any $j$.
\end{proof}

It is easy to get that $\overline{\chi}(X,Y)$ is also equal to $\Sigma_{j=0}^{p-1}\chi(X,\tau^j Y)$. Hence, if $X$ or $Y$  lies in a homogenous tube, we have $$\mu Y-\mu X=\overline{\chi}(X,Y)=p\chi(X,Y)=p(\dim_k\Hom_{\X}(X,Y)-\Ext_{\X}^1(X,Y)).$$ Then it is easy to get
 \begin{lem}\label{mor from homogenous tube}
Let $\X$ be a tubular weighted projective line,  $X,Y\in\ind\X$, where $Y$ lies in a homogenous tube.
If $\mu X<\mu Y$, then $\Hom_{\X}(X,Y)\neq 0$,
while if $\mu X>\mu Y$, then $\Hom_{\X}(Y,X)\neq 0$.
 \end{lem}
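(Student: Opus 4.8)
The plan is to read off everything from the identity recorded immediately before the statement, which is a direct consequence of the tubular Riemann--Roch formula. Since $Y$ lies in a homogeneous tube, its $\tau$-period is $1$, so $\tau^j Y=Y$ for every $j$; plugging this into the expression $\overline{\chi}(X,Y)=\Sigma_{j=0}^{p-1}\chi(X,\tau^j Y)$ collapses the averaged Euler form, and combining with $\overline{\chi}(X,Y)=\mu Y-\mu X$ for $\X$ tubular we obtain
\[
\mu Y-\mu X \;=\; p\,\chi(X,Y)\;=\;p\bigl(\dim_k\Hom_{\X}(X,Y)-\dim_k\Ext^1_{\X}(X,Y)\bigr).
\]
Since $p\ge 1$, the sign of $\chi(X,Y)$ is the sign of $\mu Y-\mu X$.

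For the first assertion, if $\mu X<\mu Y$ then the left-hand side is positive, hence $\chi(X,Y)>0$, hence $\dim_k\Hom_{\X}(X,Y)>\dim_k\Ext^1_{\X}(X,Y)\ge 0$, so $\Hom_{\X}(X,Y)\neq 0$.

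For the second assertion I would simply interchange the roles of $X$ and $Y$: the hypothesis that one of the two objects lies in a homogeneous tube is still satisfied, namely by $Y$, so the same identity applied to the pair $(Y,X)$ gives $\mu X-\mu Y=p\,\chi(Y,X)=p\bigl(\dim_k\Hom_{\X}(Y,X)-\dim_k\Ext^1_{\X}(Y,X)\bigr)$. If $\mu X>\mu Y$ the left-hand side is positive, so $\chi(Y,X)>0$ and therefore $\Hom_{\X}(Y,X)\neq 0$. There is no genuine obstacle here; the only point worth a moment's care is that the homogeneous-tube hypothesis is symmetric in the two arguments, so one avoids having to route through Serre duality (which would yield $\Hom_{\X}(Y,\tau X)$ rather than the desired $\Hom_{\X}(Y,X)$), and one should note that in tubular type $\tau$ preserves slopes, so the inequalities on slopes are unaffected by passing to $\tau$-shifts.
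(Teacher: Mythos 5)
Your proof is correct and follows the same route the paper intends: the tubular Riemann--Roch identity $\mu Y-\mu X=p\,\chi(X,Y)$, valid because $Y$ lies in a homogeneous tube, immediately gives both assertions by reading off the sign of $\chi$, and your observation that the homogeneous-tube hypothesis is symmetric in the two arguments correctly handles the second case without invoking Serre duality. Nothing is missing.
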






\subsection{Some basic properities of tilting objects in tubular cases}
Let $\X$ be a tubular weighted projective line.
Similar as the properties of wings in the situation of modules \cite[Chapter X, Lemma 2.8, Lemma 2.9]{SS1}({\it cf.}~\cite[Lemma 8.3.4]{M2} for wild type), one can get
\begin{lem}\label{mor}
Let $X,Z$ be two indecomposable object in $\coh\X$ and  $X\notin \mw_Z$.  If $\Hom_{\X}(X,Z)=0$, then for any object $Y\in \mw_Z$, $\Hom_{\X}(X,Y)=0$. {Dually}, if $\Hom_{\X}(Z,X)=0$, then for any object $Y\in \mw_Z$, $\Hom_{\X}(Y,X)=0$.
\end{lem}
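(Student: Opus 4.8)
The plan is to exploit the structure of a standard tube and reduce the statement about an arbitrary $Y\in\mw_Z$ to the hypothesis on $Z$ by moving along wing-inclusion maps. Recall that the wing $\mw_Z$ consists of all indecomposables $Y$ with $Z$ sitting at the "mouth-facing apex"; concretely, writing $r=\operatorname{q.l.}Z$, the wing is the triangular poset of indecomposables of quasi-length $\le r$ supported "below" $Z$, and for every such $Y$ there is a nonzero morphism $Y\to Z$ (when $Y$ is on one of the two edges the standard maps are mono/epi respectively, and in general one composes). I would first record this standard fact: for every $Y\in\mw_Z$ there is a nonzero map $\iota_Y\colon Y\to Z$, and dually a nonzero map $\pi_Y\colon Z\to Y$; these come from the uniserial/standard structure of $\mt_Z$ (Lemma~\ref{shape of coh_0 X}, transported to $\mc_q$), exactly as in \cite[Chapter X, Lemma 2.8]{SS1}.

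Now suppose $\Hom_{\X}(X,Z)=0$ with $X\notin\mw_Z$, and let $Y\in\mw_Z$. Pick any $g\colon X\to Y$. Composing with $\iota_Y\colon Y\to Z$ gives $\iota_Y g\colon X\to Z$, which must be zero by hypothesis. To conclude $g=0$ I need that composition with $\iota_Y$ is injective on $\Hom_{\X}(X,Y)$; equivalently, that no nonzero map $X\to Y$ can factor through $\ker\iota_Y$ or land in a way killed by $\iota_Y$. This is where the hypothesis $X\notin\mw_Z$ enters: the kernel/cokernel pieces of the standard maps inside $\mt_Z$ again lie in $\mw_Z$, and any nonzero $g$ with $\iota_Y g=0$ would, by the uniseriality of the tube, force $X$ to map nontrivially into a proper subobject of $Y$ which is itself an object of the wing — and then peeling off quasi-composition factors produces a nonzero map from $X$ into an indecomposable of $\mw_Z$, ultimately a nonzero map $X\to Z$ or else it exhibits $X$ as a quasi-subobject, hence an object, of $\mt_Z$ inside $\mw_Z$, contradicting $X\notin\mw_Z$. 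The dual statement, starting from $\Hom_{\X}(Z,X)=0$ and a map $Y\to X$, is obtained by composing on the other side with $\pi_Y\colon Z\to Y$ and running the same argument with arrows reversed.

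The main obstacle is the second paragraph's factorization claim: making precise that a nonzero $g\colon X\to Y$ killed by the wing-map to $Z$ forces $X$ into the wing. The clean way to handle this is not to argue inside $\coh\X$ directly but to pass to the abelian subcategory generated by $\mt_Z$: since $X\notin\mw_Z$, either $X$ lies outside the tube $\mt_Z$ entirely or it lies in $\mt_Z$ but outside the wing. In the first case one uses that $\mt_Z$ is a standard component, so every map from $X$ into an object of $\mt_Z$ factors through the mouth of $\mt_Z$ and is determined by the mesh combinatorics; tracking quasi-lengths shows $\Hom(X,Y)\hookrightarrow\Hom(X,Z)$. In the second case one uses the explicit ray/coray filtration of $Y$ by quasi-subobjects, all of which are proper subobjects of $Z$ as well, to get the same injectivity. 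I would present this uniformly by invoking the wing lemmas for tubes verbatim from \cite{SS1} (or \cite[Lemma 8.3.4]{M2}), since $\mc_q\simeq\coh_0\X$ means the tube $\mt_Z$ behaves exactly as a tube of a tame hereditary algebra; the only thing to check is that morphisms in $\coh\X$ \emph{into or out of} a single tube from an object $X$ not in that tube are controlled the same way, which follows from $\coh\X$ being hereditary and the tubes being standard. With that in hand, both halves of the lemma are immediate.
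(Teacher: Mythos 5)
There is a genuine gap, and it occurs at the very first step. Your argument rests on the ``standard fact'' that every $Y\in\mw_Z$ admits a nonzero map $\iota_Y\colon Y\to Z$ and a nonzero map $\pi_Y\colon Z\to Y$. This is false except for objects on the two edges of the wing. Identify $\mw_Z$ with $\mod A_r$ (linear orientation, $r$ the quasi-length of $Z$), so that $Z$ is the unique sincere indecomposable $M_{[1,r]}$ and the remaining indecomposables are the interval modules $M_{[i,j]}$, $1\le i\le j\le r$. Then $\Hom(M_{[i,j]},M_{[1,r]})\neq 0$ forces $j=r$, and $\Hom(M_{[1,r]},M_{[i,j]})\neq 0$ forces $i=1$; already for $r=3$ the middle quasi-simple $M_{[2,2]}$ has no nonzero morphism to or from $Z$. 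So $\iota_Y$ exists only for the $r$ objects on one edge and $\pi_Y$ only for the $r$ objects on the other edge, and the composition argument $g\mapsto\iota_Y g$ cannot even begin for a general $Y\in\mw_Z$. The fallback in your last paragraph is also unsound: it is not true that every morphism from an object outside a tube into the tube factors through the mouth (for the Kronecker algebra and $R$ regular of regular length $2$ in a homogeneous tube one has $\dim\Hom(P,R)=2$, while the morphisms factoring through the regular socle span a one-dimensional subspace), so the asserted injection $\Hom(X,Y)\hookrightarrow\Hom(X,Z)$ is not established.

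For comparison, the paper offers no proof at all; it imports the wing lemma verbatim from \cite[Chapter X, Lemmas 2.8, 2.9]{SS1} and \cite[Lemma 8.3.4]{M2}. A correct argument replaces your single map $\iota_Y$ by the two boundary exact sequences of the wing. Write $Z=S_1[r]$ and $Y=S_i[l]$ with $i+l-1\le r$, where $S_j[m]$ is the indecomposable of $\mt_Z$ with quasi-socle $\tau^{-(j-1)}S_1$ and quasi-length $m$; then $0\to S_1[i-1]\to S_1[i+l-1]\to Y\to 0$ exhibits $Y$ as a quotient of a subobject $S_1[i+l-1]$ of $Z$, and $0\to Y\to S_i[r-i+1]\to S_{i+l}[r-i-l+1]\to 0$ embeds $Y$ into a quotient of $Z$. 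One then argues by cases on the position of $X$: if $X$ lies in a tube of slope $\mu Z$ other than $\mt_Z$, orthogonality of tubes gives the claim; if $\mu X>\mu Z$ then $\Hom_{\X}(X,\mc_{\mu Z})=0$ outright; if $\mu X<\mu Z$ then $\Ext^1_{\X}(X,-)$ vanishes on all of $\mt_Z$ by Serre duality, so applying $\Hom_{\X}(X,-)$ to the first sequence yields a surjection $\Hom_{\X}(X,S_1[i+l-1])\to\Hom_{\X}(X,Y)$ whose source injects into $\Hom_{\X}(X,Z)=0$; and if $X\in\mt_Z\setminus\mw_Z$ one uses uniseriality of the tube (the image of any nonzero $X\to Y$ is simultaneously a quotient of $X$ and a quasi-subobject of $Y$) to check that such an image forces either $\Hom_{\X}(X,Z)\neq 0$ or $X\in\mw_Z$. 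The dual half is handled dually with the second sequence. Without some such replacement for the nonexistent maps $\iota_Y,\pi_Y$, your proof does not go through.
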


According Lemma~\ref{mor}, one can get
\begin{lem}\label{tilting in wing}
Let $T$ be a tilting sheaf in $\coh\X$, $Z$ be an indecomposable  direct summand of $T$. Denote by $N$  the direct sum of all the summands of $T$ lying in the wing $\mw_Z$ and $T=T'\oplus N$. Then
 \begin{itemize}
\item [(1)] $N$ is  a tilting object in $\mw_Z$.
\item[(2)]   for any  tilting object $N'$ in $\mw_Z$, $T'\oplus N'$ is a tilting sheaf in $\coh\X$.
\item[(3)]  $T$  has a quasi-simple direct summand  lying in $\mw_Z$.
\end{itemize}
\end{lem}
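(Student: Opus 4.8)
The three statements are about the structure of a tilting sheaf $T$ relative to the wing $\mathbf{w}_Z$ of one of its indecomposable summands $Z$, living in a standard tube $\mathbf{t}_Z$ of rank $r$. My approach is to reduce everything to the combinatorics of tilting objects in a wing, using Lemma~\ref{mor} as the key tool to "trim" morphisms into and out of the wing. Throughout I will use that $\mathbf{w}_Z$, being the wing of a vertex in a standard tube, is equivalent to the module category of a linearly oriented $A_m$-quiver (where $m=q.l.Z$), so that its tilting objects are well understood; in particular any tilting object in $\mathbf{w}_Z$ has exactly $m$ non-isomorphic indecomposable summands, at least one of which is quasi-simple (indeed the summand of smallest quasi-length that is also a ``source'' in the wing ordering).

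\textbf{Part (1): $N$ is a tilting object in $\mathbf{w}_Z$.} First I would observe that $N$ is rigid in $\coh\X$ (summand of $T$), hence rigid in $\mathbf{w}_Z$, and that $N$ has at most $m=q.l.Z$ non-isomorphic summands. It remains to show $N$ has exactly $m$ summands, equivalently that $N$ is a \emph{complete} tilting object of the wing. Suppose not; then there is a nonzero indecomposable $Y\in\mathbf{w}_Z$ with $\Hom_{\X}(N,Y)=0=\Ext^1_{\X}(N,Y)$ computed inside $\mathbf{w}_Z$. The point is to upgrade these vanishings to $\Hom_{\X}(T,Y)=0=\Ext^1_{\X}(T,Y)$ in all of $\coh\X$, which contradicts that $T$ is tilting. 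For the $\Hom$: if $X$ is a summand of $T'=T\ominus N$, then $X\notin\mathbf{w}_Z$, and I must show $\Hom_{\X}(X,Y)=0$; this is where I invoke Lemma~\ref{mor} — it suffices to know $\Hom_{\X}(X,Z)=0$, which would follow if $X$ and $Z$ lie in distinct tubes, but when $X\in\mathbf{t}_Z$ and $X\notin\mathbf{w}_Z$ I need a separate wing/tube argument (either $X$ dominates the wing so $\Hom_{\X}(X,Z)=0$, or $X$ is ``below'' it; in either case a standard ray/coray argument in the tube gives $\Hom_{\X}(X,Y)=0$ from rigidity of $T$). For the $\Ext^1$, i.e. $\Hom_{\X}(Y,\tau X)$: dually, $\tau X\notin\mathbf{w}_Z$ (wings are $\tau$-slices, not $\tau$-stable — here one must be slightly careful and instead argue via $\Hom_{\X}(Y,\tau X)\cong\Ext^1_{\X}(X,Y)$ and Lemma~\ref{mor} in its dual form applied to $Z$), using that $\Ext^1_{\X}(X,Z)=0$ by rigidity of $T$. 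Thus $Y$ would be orthogonal to all of $T$, forcing $Y=0$, a contradiction.

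\textbf{Parts (2) and (3).} For (2): let $N'$ be any tilting object of $\mathbf{w}_Z$. Both $N$ and $N'$ are tilting in $\mathbf{w}_Z$, and the wing category is a connected hereditary category of finite representation type (type $A_m$), so $N$ and $N'$ are connected by a finite sequence of mutations \emph{inside} $\mathbf{w}_Z$; I would prove $T'\oplus N'$ tilting by induction on the length of this sequence, the inductive step being a single mutation. So it suffices to treat the case where $N'=\mu_X(N)$ for an indecomposable summand $X$ of $N$ and the mutation is performed in $\mathbf{w}_Z$: one forms the exchange triangle $X\to B\to X^{*}$ (or its reverse) with $B\in\add(N\ominus X)$ in $\mathbf{w}_Z$, and must check $X^{*}$ is the complement of the almost complete tilting sheaf $T'\oplus(N\ominus X)$ \emph{in $\coh\X$}. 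Since the exchange sequence lies entirely in $\mathbf{w}_Z$ and $B$ is already a summand context inside $N$, the minimal left/right $\add(T'\oplus(N\ominus X))$-approximations of $X$ in $\coh\X$ coincide (up to the part inside the wing) with those computed in $\mathbf{w}_Z$ — here I again need Lemma~\ref{mor} to argue no component of an approximation can come from $T'$, since $\Hom_{\X}(X,T')$ and $\Hom_{\X}(T',X)$ are controlled by $\Hom_{\X}(Z,\cdot)$, $\Hom_{\X}(\cdot,Z)$ which vanish appropriately; then Lemma~\ref{Hu} identifies $X^{*}$ with the wing-mutation $X^{*}$ and shows $T'\oplus(N\ominus X)\oplus X^{*}$ is tilting. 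For (3): pick any tilting object $N'$ of $\mathbf{w}_Z$ that \emph{does} contain a quasi-simple object (e.g. the ``canonical'' slice of the wing, which always contains the quasi-simple source); by part (2), $T'\oplus N'$ is a tilting sheaf, and then the graph-connectedness of complements (Lemma~\ref{Hu}, \ie the mutation graph of tilting sheaves with fixed almost complete part has exactly two vertices) together with part (1) — applied to $T'\oplus N'$ in place of $T$ — forces $T$ itself to have a quasi-simple summand in $\mathbf{w}_Z$; more directly, $N$ and $N'$ are both tilting in $\mathbf{w}_Z$, the wing has only finitely many tilting objects, and \emph{every} tilting object of a linear-$A_m$ wing contains the unique quasi-simple source of the wing (this is an elementary fact about tilting modules over $A_m$), so $N$, hence $T$, contains it.

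\textbf{Main obstacle.} The only genuinely delicate point is the ``trimming'' step that recurs in all three parts: transferring $\Hom$/$\Ext^1$ vanishing (or approximation minimality) between $\coh\X$ and the wing $\mathbf{w}_Z$. Lemma~\ref{mor} handles the case of a summand $X\notin\mathbf{w}_Z$ once one knows $\Hom_{\X}(X,Z)=0$ or $\Hom_{\X}(Z,X)=0$; the subtlety is exactly the summands $X$ that lie in the \emph{same} tube $\mathbf{t}_Z$ as $Z$ but outside the wing, and (for the $\Ext$ side) the interaction with $\tau$, since a wing is not $\tau$-stable. I expect to resolve this by a careful case analysis inside the single standard tube $\mathbf{t}_Z$ — using rigidity of $T$ (so $q.l.$ of every summand in $\mathbf{t}_Z$ is $\le r-1$) together with the explicit ray/coray description of $\Hom$-spaces in a standard tube — and by systematically replacing $\Ext^1_{\X}(A,B)$ with $\D\Hom_{\X}(B,\tau A)$ and applying the \emph{dual} of Lemma~\ref{mor} to $Z$ rather than to $\tau$-translates, so that the wing one works in stays fixed.
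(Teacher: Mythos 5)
Your reduction of all three parts to a ``trimming'' step that transfers $\Hom$- and $\Ext^1$-vanishing from the wing to all of $\coh\X$ breaks down at the $\Hom$-half, and this is fatal to your proof of (1). You propose to take a nonzero $Y\in\mw_Z$ orthogonal to $N$ inside the wing and upgrade this to $\Hom_{\X}(T,Y)=0=\Ext^1_{\X}(T,Y)$; the $\Ext^1$-half is indeed fine (via $\Ext^1_{\X}(X,Y)\cong \D\Hom_{\X}(Y,\tau X)$, the vanishing $\Hom_{\X}(Z,\tau X)=\D\Ext^1_{\X}(X,Z)=0$ from rigidity of $T$, and the dual form of Lemma~\ref{mor}), but the $\Hom$-half is not available: rigidity of $T$ gives you \emph{no} control over $\Hom_{\X}(X,Z)$ for a summand $X$ of $T'$, and your fallback ``$\Hom_{\X}(X,Z)=0$ would follow if $X$ and $Z$ lie in distinct tubes'' is false in the tubular setting, where any two tubes of different slopes admit nonzero morphisms in the increasing direction and a tilting sheaf (e.g.\ $T_{can}$) has summands of many slopes with plenty of nonzero $\Hom$'s among them. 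So the $Y$ you produce need not lie in $T^{\perp}$ and no contradiction with $T$ being tilting arises. The paper's proof avoids ever needing a $\Hom$-vanishing: it completes $N$ to a tilting object $N\oplus M$ of the wing and checks only the two dualized $\Ext^1$-vanishings $\Ext^1_{\X}(M,T')=\D\Hom_{\X}(T',\tau M)$ and $\Ext^1_{\X}(T',M)=\D\Hom_{\X}(M,\tau T')$, both of which follow from Lemma~\ref{mor} applied to the wings $\mw_{\tau Z}$ and $\mw_Z$ together with $\Hom_{\X}(T',\tau Z)=\D\Ext^1_{\X}(Z,T')=0$ and $\Hom_{\X}(Z,\tau T')=\D\Ext^1_{\X}(T',Z)=0$; then $T\oplus M$ is rigid, maximality of the tilting object $T$ forces $M\in\add T\cap\mw_Z=\add N$, and $N$ is tilting in $\mw_Z$. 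The identical $\Ext^1$-only computation plus a count of summands gives (2) at once, so your induction over wing-mutations (whose inductive step again needs the unavailable $\Hom$-vanishing to control the approximations) is both unnecessary and not repairable as stated.

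For (3), the ``elementary fact'' you invoke is false: it is not true that every tilting module over the linearly oriented $A_m$-quiver contains \emph{the} quasi-simple source. Already for $A_2$ with quiver $1\to 2$, the tilting module $P_1\oplus P_2$ omits the simple injective $S_1$ while $P_1\oplus I_1$ omits the simple projective $S_2$, so no single simple occurs in every tilting module. The correct statement, and the one the paper uses, is that every tilting $A_m$-module contains \emph{at least one} simple module; applied to $N$, which by part (1) is a tilting object of $\mw_Z\simeq\mod A_l$, this produces some quasi-simple summand of $T$ in the wing. Your alternative route via part (2) also cannot close the argument: exhibiting a different tilting sheaf $T'\oplus N'$ with a quasi-simple summand in $\mw_Z$ gives no information about the summands of $T$ itself.
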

\begin{proof} For $(1)$, it is easy to get that $N$ is partial tilting  in $\mw_Z$. Choose  an object $M\in\mw_Z$ such that $N\oplus M$ is tilting in $\mw_Z$. Since $\homh(T',\tau Z)=\D\Ext_{\X}^1(Z,T')=0$, by Lemma~\ref{mor}, we have $\Ext_{\X}^1(M,T')=\D\homh(T',\tau M)=0$ since $\tau M\in\mw_{\tau Z}$. Similarly, we have $\Ext_{\X}^1(T',M)=0$, then we get a partial tilting object  $T'\oplus N\oplus M=T\oplus M$. Since $T$ is tilting, then $M\in\add T$. By $M\in\mw_Z$, $M\in\add N$. Therefore, $N$ is a tilting object in $\mw_Z$.

For $(2)$, similarly as $(1)$, we have $T'\oplus N'$ is  partial tilting. Since $|N|=|N'|$, $|T'\oplus N'|=|T'|+|N'|=|T'|+|N|=|T'\oplus N|=|T|$. Hence, $T'\oplus N'$  is a tilting sheaf in $\coh\X$.

For $(3)$, suppose  $q.l. Z=l$.  It is known that the wing $\mw_Z$ is equivalent to  $\mod A_l$, where $A_l$ is the hereditary algebra given as the path algebra of a quiver of Dynkin type $A_l$ with linear orientation. Hence $N$ can be seen as a tilting module in $\mod A_l$. It is known that each tilting $A_l$-module has at least one simple $A_l$-module, so $N$ has a  quasi-simple direct summand  lying in $\mw_Z$. Then we can get the result.
\end{proof}

\begin{lem}\cite[Proposition 4.2]{M1}\label{exist p periods}
Each tilting sheaf $T$ in $\coh\X$ has an indecomposable direct summand whose $\tau$-period equals $p.$
\end{lem}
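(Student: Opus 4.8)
The plan is first to recast the statement in terms of tubes. For an indecomposable sheaf $Z$ of finite slope $q$, the equivalence $\mc_q\simeq\coh_0\X$ of \cite[Theorem 5.6]{GL1} carries $Z$ into one of the standard tubes, and since $\tau$ restricts to the Auslander--Reiten translation of $\mc_q$, the $\tau$-period of $Z$ equals the rank of that tube; for $q=\infty$ one argues directly inside $\coh_0\X$. Since $p=\max_i p_i$ for each of the four tubular weight types, an indecomposable has $\tau$-period $p$ precisely when it lies in a tube of maximal rank. A homogeneous tube contains no rigid indecomposable (rigidity in a rank-$r$ tube forces quasi-length $\le r-1$), so every indecomposable summand of a tilting sheaf has $\tau$-period in $\{p_1,\dots,p_t\}$; for the types $(2,2,2,2)$ and $(3,3,3)$ every exceptional tube has rank $p$ (namely $2$, resp.\ $3$), so the assertion is immediate, and only the types $(2,4,4)$ and $(2,3,6)$ remain.

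Assume then that $\X$ is of type $(2,4,4)$ or $(2,3,6)$ and, for a contradiction, that no indecomposable summand of the tilting sheaf $T=\bigoplus_{i=1}^{n}T_i$ has $\tau$-period $p$; then every summand lies in a tube of rank at most $p-1$ in its slope category -- the unique rank-$2$ tube for $(2,4,4)$, and the rank-$2$ or the rank-$3$ tube for $(2,3,6)$. For a fixed slope $q$ the summands of $T$ lying in $\mc_q$ form a rigid object of $\mc_q\simeq\coh_0\X$; since a basic rigid object of a rank-$r$ tube has at most $r-1$ indecomposable summands and the tubes are pairwise orthogonal, $T$ has at most $c$ summands in each slope, with $c=1$ for $(2,4,4)$ and $c=3$ for $(2,3,6)$. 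As $n>c$ (indeed $n=9>1$, resp.\ $n=10>3$), the summands of $T$ are spread over many distinct slopes, and the aim is to derive a contradiction by bringing in the defining property of a \emph{tilting} sheaf: $\Hom_\X(T,X)=0=\Ext^1_\X(T,X)$ forces $X=0$, equivalently the classes $[T_1],\dots,[T_n]$ form a $\Z$-basis of $K_0(\X)$.

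This last step is the heart of the matter, and it is where I expect the main difficulty: a purely numerical count cannot close the argument, since the canonical tilting bundle $T_{can}$ in type $(2,3,6)$ already has indecomposable summands in seven distinct slopes while all of them have $\tau$-period $p$, so the generating/basis property must be used in an essential way. A tempting approach is to induct on $n$ via perpendicular categories -- choose, by Lemma~\ref{tilting in wing}(3) applied to a summand of minimal slope, a quasi-simple summand $Z$ of $T$, pass to its right perpendicular category $Z^{\perp}$, which by Geigle--Lenzing is again the category of coherent sheaves $\coh\X'$ on a weighted projective line $\X'$ on which $T/Z$ induces a tilting sheaf with $n-1$ summands, and track the $\tau_\X$-period through the embedding $Z^{\perp}\hookrightarrow\coh\X$; but here $\X'$ is no longer of tubular type (each reduction of a tubular weight sequence yields a domestic one), so the inductive hypothesis does not transfer directly, and controlling how the maximal tube rank changes is exactly the obstruction. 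The clean resolution, which I would ultimately follow, is that of \cite{M1}: use the classification of tilting sheaves in tubular type up to tubular mutation, reducing the claim to a finite verification on a list of normal forms.
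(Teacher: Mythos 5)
First, note that the paper does not prove this lemma at all: it is imported verbatim from Meltzer \cite[Proposition~4.2]{M1}, so there is no internal proof to compare against. Judged on its own, your attempt contains a genuine gap. The reductions you make are correct and worth keeping: the $\tau$-period of a rigid indecomposable equals the rank of the tube containing it, $p=\max_i p_i$ for all four tubular weight types, homogeneous tubes contain no rigid objects, and hence the statement is immediate for $(2,2,2,2)$ and $(3,3,3)$, while for $(2,4,4)$ and $(2,3,6)$ it amounts to showing that not every summand of $T$ can sit in a tube of rank $2$ (resp.\ rank $2$ or $3$). But at exactly the point where these two types must be handled you stop, observe that neither the slope-count nor the perpendicular-category induction closes the argument, and declare that you would ``follow \cite{M1}'' --- i.e.\ you invoke the very result you are supposed to be proving. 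That is not a proof; the cases $(2,4,4)$ and $(2,3,6)$, which are the only nontrivial ones, remain entirely open in your write-up.

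Your own suggestion that the generating property of $[T_1],\dots,[T_n]$ in $K_0(\X)$ must enter is the right instinct, and it can be made to work at least for $(2,4,4)$: the pair $(\rank,\deg)$ defines a surjection $K_0(\X)\to\Z^2$ (witnessed by $\co$ and a simple sheaf concentrated at $\lambda_t$), and under the telescopic equivalences a quasi-simple in the rank-$2$ tube of $\mc_q$, $q=d/r$ in lowest terms, has $(\rank,\deg)=\tfrac{p}{2}\cdot(r,d)=2(r,d)$; so if every $T_i$ lay in a rank-$2$ tube, all classes $[T_i]$ would map into $2\Z\times 2\Z$, contradicting the fact that a tilting sheaf generates $K_0(\X)$. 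For $(2,3,6)$ the analogous divisibilities are by $3$ (rank-$2$ tubes) and by $2$ (rank-$3$ tubes), which do not by themselves obstruct generating $\Z^2$, so one must work in the full rank-$10$ Grothendieck group or with the classification of tubular tilting sheaves, as Meltzer does. Until an argument of this kind is supplied for both remaining types, the proposal establishes the lemma only for weight types $(2,2,2,2)$ and $(3,3,3)$.
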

By Lemma~\ref{exist p periods} and Lemma~\ref{tilting in wing}, one can get
\begin{lem}\label{exist p periods of quasi-simple object}
Each tilting sheaf $T$ in $\coh\X$ has a quasi-simple direct summand whose $\tau$-period equals $p.$
\end{lem}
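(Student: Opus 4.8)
The plan is to bridge Lemma~\ref{exist p periods} and Lemma~\ref{tilting in wing}(3) by the elementary observation that, inside a standard tube, the $\tau$-period of an indecomposable object is exactly the rank of that tube. First I would fix a tilting sheaf $T$ in $\coh\X$ and, by Lemma~\ref{exist p periods}, choose an indecomposable direct summand $Z$ of $T$ whose $\tau$-period equals $p$. Since $\X$ is of tubular type, $Z$ is semistable, so it lies in some $\mc_q$, and hence in one of the standard tubes forming the Auslander--Reiten quiver of $\mc_q$; write $r$ for the rank of the tube $\mt_Z$.

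The key step is to show that \emph{every} indecomposable object of $\mt_Z$ has $\tau$-period exactly $r$. Since $\mt_Z$ is a standard tube of rank $r$, the functor $\tau$ restricts to a transitive (hence length-$r$) cyclic permutation of the quasi-simple objects of $\mt_Z$, and an arbitrary indecomposable $X\in\mt_Z$ is determined up to isomorphism by its quasi-socle (a quasi-simple object) together with its quasi-length. As $\tau$ is an autoequivalence, it carries the quasi-socle of $X$ to the quasi-socle of $\tau X$ and leaves quasi-lengths unchanged, so $\tau^d X\cong X$ holds if and only if $\tau^d$ fixes the quasi-socle of $X$, i.e.\ if and only if $r\mid d$. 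Thus the $\tau$-period of $X$ is $r$. Applying this to $X=Z$ and using that $Z$ has $\tau$-period $p$ forces $r=p$.

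Finally, by Lemma~\ref{tilting in wing}(3) the tilting sheaf $T$ has a quasi-simple direct summand $W$ lying in the wing $\mw_Z$; since $\mw_Z\subseteq\mt_Z$, the object $W$ is an indecomposable of $\mt_Z$ and therefore has $\tau$-period $r=p$ by the previous step. This $W$ is the desired quasi-simple direct summand of $T$. I do not anticipate a real obstacle: the only point requiring care is the claim about $\tau$-periods within a tube, and this relies solely on the tubes in question being \emph{standard}, so that their indecomposables are described combinatorially by quasi-socle and quasi-length — a fact already recorded in the discussion of $\coh_0\X$ and of the categories $\mc_q$. (Note that $Z$, being a summand of a tilting sheaf, is rigid, so in fact $q.l.\,Z\le r-1$ and $\mw_Z$ is a proper subconfiguration of $\mt_Z$; but this is not needed beyond Lemma~\ref{tilting in wing}(3).)
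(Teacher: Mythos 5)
Your argument is correct and is exactly the route the paper intends: it states the lemma as an immediate consequence of Lemma~\ref{exist p periods} and Lemma~\ref{tilting in wing}, and your filled-in step (that every indecomposable in a standard tube of rank $r$ has $\tau$-period $r$, so the wing $\mw_Z$ of the period-$p$ summand $Z$ consists of objects of $\tau$-period $p$) is precisely the missing glue. No issues.
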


For a line bundle, we have
\begin{lem}~\cite[Lemma 4.1]{M1}\label{line bundle belong to module canonical algebras}
Let $T$ be a tilting bundle in $\coh\X$. Then for any line bundle $L$,  either $\Ext^1_{\X}(T,L)=0$ or $\Hom_{\X}(T,L)=0$.
\end{lem}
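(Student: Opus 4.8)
The plan is to argue by contradiction, using only the rigidity of $T$. Suppose $L$ is a line bundle for which both $\Hom_\X(T,L)\neq 0$ and $\Ext^1_\X(T,L)\neq 0$; I will produce a violation of $\Ext^1_\X(T,T)=0$.

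Write $T=\bigoplus_{k=1}^n T_k$ with the $T_k$ indecomposable. From $\Hom_\X(T,L)\neq 0$ I choose an index $i$ and a nonzero morphism $f\colon T_i\to L$. From $\Ext^1_\X(T,L)\neq 0$, applying the Serre duality formula $\D\Ext^1_\X(E,F)=\Hom_\X(F,\tau E)$, I obtain an index $j$ with $\Hom_\X(L,\tau T_j)\neq 0$, hence a nonzero morphism $g\colon L\to\tau T_j$.

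The heart of the argument is the observation that $g$ is automatically a monomorphism. Since $T$ is a tilting bundle, $T_j\in\vect\X$, so $\tau T_j=T_j(\vec{w})$ is again a vector bundle, in particular torsion-free; hence $\im g$, being a subsheaf of $\tau T_j$, is torsion-free, and it is nonzero, so $\rank(\im g)\geq 1$. As $\rank L=1$ and $\rank$ is additive on $0\to\ker g\to L\to\im g\to 0$, this forces $\rank(\ker g)=0$; but $\ker g$ is a subsheaf of the torsion-free sheaf $L$, so $\ker g$ is torsion-free, and therefore $\ker g=0$. Consequently the composite $g\circ f\colon T_i\to\tau T_j$ has image $g(\im f)$, which is nonzero because $f\neq 0$ and $g$ is injective. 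Thus $\Hom_\X(T_i,\tau T_j)\neq 0$, and by Serre duality once more $\Ext^1_\X(T_j,T_i)\cong\D\Hom_\X(T_i,\tau T_j)\neq 0$. Since $\Ext^1_\X(T_j,T_i)$ is a direct summand of $\Ext^1_\X(T,T)$, this contradicts the rigidity of $T$, and the lemma follows.

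The only genuinely delicate point is the rank bookkeeping that makes $g$ a monomorphism, and this is precisely where the hypothesis that $L$ is a \emph{line} bundle (rank one) is used; the analogous statement with an arbitrary rigid bundle in place of $L$ is false. Everything else is formal — in particular the argument does not even use that $\X$ is of tubular type — so I expect no real obstacle beyond making sure the torsion-free/rank statements are invoked correctly.
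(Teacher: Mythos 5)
Your proof is correct. The paper itself gives no argument for this lemma --- it is quoted from Meltzer \cite[Lemma 4.1]{M1} --- and your reasoning is exactly the standard one: Serre duality converts $\Ext^1_{\X}(T,L)\neq 0$ into a nonzero map $L\to\tau T_j$, which is automatically a monomorphism because any nonzero map from a line bundle into a torsion-free sheaf has torsion-free image of rank $\geq 1$ and hence kernel of rank $0$, i.e.\ zero; composing with a nonzero $T_i\to L$ then contradicts $\Ext^1_{\X}(T_j,T_i)=0$. Your observation that only rigidity and rank-one-ness of $L$ are used (not the tubular hypothesis) is also accurate.
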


\subsection{Some automorphisms of $\md^b(\X)$} Let $\X$ be a  weighted projective line. Denote the finite bounded derived category of $\coh\X$ by $\md^b(\X)$, the translation functor of $\md^b(\X)$ by $[1]$.  
Since $\coh\X$ is hereditary, every $E\in\ind(\md^b(\X))$ {belongs} to $\coh\X[l]$ for some integer $l$. Hence each self-equivalence of $\coh\X$ can induce an automorphism of $\md^b(\X)$  naturally.   Denote by $\Pic(\X)$ the Picard group which can be identified with the grading group $L(\mathbf{p})$, 
$\Pic_0(\X)$ be the subgroup of $\Pic{\X}$ consisting of all degree-preserving shifts,
$\Aut(\coh\X)$ the automorphism group  of  $\coh\X$. Define the automorphism group $\Aut \X$ of $\X$ as the subgroup of $\Aut(\coh\X)$ of automorphisms $F$ fixing the structure sheaf, \ie satisfying $F(\co)=\co$.

 For an  object $E\in\coh\X$ and any $i\in\Z$, we say $\mu (E[i])=\mu E$. An automorphism $F$ of $\md^b(\X)$ is called  {\it slope-preserving} if $\mu (F(E))=\mu E$ for any $E\in\ind \md^b(\X)$.
Denote by $\Aut_{\mu}(\coh\X)$ and $\Aut_{\mu}(\md^b(\X))$ of slope-preserving automorphism of $\coh\X$ and $\md^b(\X)$.

\begin{lem}\cite[Proposition 4.1, Proposition 4.4]{LM2}\label{slope-preserving automorphism}
Let $\X$ be a weighted projective line, then $${\Aut_{\mu}(\md^b(\X))=\langle [1]\rangle\times \Aut_{\mu}(\coh\X)},\ \ \  \Aut_{\mu}(\coh\X)=\Pic_0\X\ltimes \Aut(\X),$$
where $\langle [1]\rangle\cong \Z$ is the subgroup generated by the translation functor of $\md^b(\X)$.
\end{lem}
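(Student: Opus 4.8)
The plan is to establish the two displayed equalities separately: the first reduces the study of $\Aut_{\mu}(\md^b(\X))$ to that of the standard heart $\coh\X$, while the second gives the internal structure of $\Aut_{\mu}(\coh\X)$.

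For the first equality, I would begin with the easy structural observations. The shift $[1]$ is a triangle functor, hence commutes up to natural isomorphism with every autoequivalence of $\md^b(\X)$, so $\langle[1]\rangle\cong\Z$ is a \emph{central} subgroup; it is slope-preserving because $\mu(E[i])=\mu E$ by definition. Since $[m]$ sends $\coh\X$ to $\coh\X[m]$, which meets $\coh\X$ only for $m=0$, we get $\langle[1]\rangle\cap\Aut_{\mu}(\coh\X)=\{1\}$, so the product is automatically \emph{direct} once surjectivity is shown. The real content is that every $F\in\Aut_{\mu}(\md^b(\X))$ factors as $F=[m]\circ G$ with $G$ an autoequivalence preserving the heart $\coh\X$. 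For this I would use that $\coh\X$ is connected, hereditary, has no nonzero projective or injective objects, and admits Serre duality; these properties force any autoequivalence of $\md^b(\X)$ to carry the standard heart to a shift $\coh\X[m]$. Concretely, after normalising by a shift so that $F(\co)\in\coh\X$, one propagates the "level~$0$'' property along nonzero morphisms: heredity gives $\Hom_{\md^b(\X)}(A[i],B[j])=\Ext^{j-i}_{\X}(A,B)$, which vanishes unless $j-i\in\{0,1\}$, so a single nonzero map constrains the relative level, and using morphisms in both directions along the connected family of line bundles (together with the line-bundle filtration of an arbitrary vector bundle and the finite-length sheaves) pins every indecomposable of $\coh\X$ to level $0$. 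Applying the same to $F^{-1}$, the functor $G:=[-m]\circ F$ preserves $\coh\X$ and is still slope-preserving, which gives surjectivity.

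For the second equality, I would first check that both factors are slope-preserving and then show they generate $\Aut_{\mu}(\coh\X)$. A degree-preserving twist $E\mapsto E(\vec{x})$ with $\delta(\vec{x})=0$ satisfies $\mu(E(\vec{x}))=\mu E+\delta(\vec{x})=\mu E$, so $\Pic_0\X\subseteq\Aut_{\mu}(\coh\X)$; and any $H\in\Aut(\X)$ fixes $\co$ and preserves the Euler form, hence preserves rank and degree and so lies in $\Aut_{\mu}(\coh\X)$. Conversely, given $G\in\Aut_{\mu}(\coh\X)$, the key point is that $G$ preserves the slope-$\infty$ subcategory $\coh_0\X$ of finite-length sheaves, hence its torsion-free complement $\vect\X$; consequently the induced map on $K_0(\X)$ preserves $\ker(\rank)=[\coh_0\X]$, which forces $\rank\circ G_{*}=\rank$, the sign being fixed by positivity of ranks. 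Therefore $G(\co)$ is an indecomposable vector bundle of rank $1$ and slope $0$, i.e. a line bundle $\co(\vec{x})$ with $\delta(\vec{x})=0$. Composing with the inverse twist, $H:=(-\vec{x})\circ G$ fixes $\co$ and so belongs to $\Aut(\X)$, giving $G\in\Pic_0\X\cdot\Aut(\X)$. Finally $\Pic_0\X\cap\Aut(\X)=\{1\}$ (a twist fixing $\co$ is trivial) and $\Aut(\X)$ normalises $\Pic_0\X$ through its action on $L(\mathbf{p})$ preserving $\delta$, so the product is the semidirect product $\Pic_0\X\ltimes\Aut(\X)$.

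The main obstacle is the heart-preservation step in the first equality: a priori an autoequivalence of $\md^b(\X)$ could tilt the standard heart to an exotic bounded heart, and it is exactly here that the homological rigidity of $\coh\X$ (heredity, connectedness, absence of projectives and injectives, Serre duality) must be exploited to exclude this, after which slope-preservation is merely carried along. A secondary technical point is justifying that $G$ preserves the rank form in the second equality, for which the invariance of the finite-length subcategory $\coh_0\X$ under slope-preserving equivalences is the crucial input.
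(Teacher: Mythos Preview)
The paper does not give its own proof of this lemma: it is quoted directly from \cite[Propositions 4.1 and 4.4]{LM2} and used as a black box, so there is no argument in the paper to compare your proposal against.

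That said, your sketch is a reasonable reconstruction of how one proves such a statement. The first equality is indeed the substantive part, and your reduction via ``after a shift, $F$ preserves the heart'' is the standard mechanism; the ingredients you list (heredity, absence of nonzero projectives and injectives in $\coh\X$, connectedness via line bundles) are exactly what make the level-propagation argument go through. One point to tighten: your propagation only shows that a nonzero map $A\to B$ with $F(A)$ in level $0$ forces $F(B)$ into level $0$ or $1$, so to pin everything to level $0$ you need maps in both directions, which you do obtain by combining $\Hom_\X(\co,\co(\vec c))\neq 0$ and $\Hom_\X(\co(\vec c),\co(2\vec c))\neq 0$ with Serre duality $\Ext^1_\X(\co(2\vec c),\co)\neq 0$; it is worth spelling out one such cycle explicitly. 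For the second equality your argument is clean; the only step that deserves one more line is why $\Aut(\X)$ normalises $\Pic_0\X$: if $H\in\Aut(\X)$ then $H\sigma_{\vec x}H^{-1}$ is again a line-bundle twist (it sends $\co$ to the line bundle $H(\co(\vec x))$) and has the same effect on slopes as $\sigma_{\vec x}$ because $H$ is slope-preserving, hence lies in $\Pic_0\X$.
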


\subsubsection{Tubular case} Assume that $\X$ is a  weighted projective line of tubular type. In this {subsection}, we are going to discuss some automorphisms of $\md^b(\X)$. {For more details, we refer to \cite{LM1,LM2,M2}.}

\begin{thm}\cite[Theorem 5.2.6]{M2}\label{telescopic functor}
For each $q,q'\in \Q\cup \{\infty\}$, there is an automorphism $\Phi_{q',q}$ of $\md^b(\X)$ such that $\mc_q$ is mapped to $\mc_{q'}$. Moreover, this functors satisfy $\Phi_{q^{''},q}=\Phi_{q^{''},q'}\Phi_{q',q}$ and $\Phi_{q,q}=Id$.
\end{thm}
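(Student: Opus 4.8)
The plan is to construct the automorphisms $\Phi_{q',q}$ as compositions of a few generating automorphisms of $\md^b(\X)$, using the fact that in the tubular case the rational slopes are permuted transitively by a natural group action on the derived category. First I would recall the two basic types of automorphisms at hand: the grading shifts by $L(\mathbf p)$, i.e. the twist autoequivalences $E\mapsto E(\vec x)$, which by the Riemann–Roch formula $\overline\chi(X,Y)=\mu Y-\mu X$ (combined with $\mu(E(\vec x))=\mu E+\delta(\vec x)$) shift every slope by the fixed rational number $\delta(\vec x)$; and the shift functor $[1]$, which fixes slopes but moves objects between copies of $\coh\X$ inside $\md^b(\X)$. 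Together with the Serre functor $\tau$ (which is slope-preserving, since $\tau E=E(\vec w)$ with $\delta(\vec w)=0$ in the tubular case) these do not yet act transitively on $\Q\cup\{\infty\}$; the extra ingredient is the derived equivalence of $\coh\X$ with itself coming from a ``tubular mutation'' (tilting in a tubular family) which exchanges $\mc_0$ and $\mc_\infty$. The combination gives an action of $\mathrm{SL}_2(\Z)$ (or at least of $\mathrm{GL}_2(\Z)$) on $\md^b(\X)$ that on the slope parameter is the standard action of $\mathrm{SL}_2(\Z)$ on $\mathbb{P}^1(\Q)=\Q\cup\{\infty\}$ by fractional linear transformations $q\mapsto \frac{aq+b}{cq+d}$.

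Next I would use transitivity: since $\mathrm{SL}_2(\Z)$ acts transitively on $\mathbb P^1(\Q)$, for any $q,q'\in\Q\cup\{\infty\}$ there is a group element carrying $q$ to $q'$, and I take $\Phi_{q',q}$ to be (a choice of) the corresponding autoequivalence of $\md^b(\X)$. By construction this sends the semistable subcategory $\mc_q$ to $\mc_{q'}$: one checks that each generator sends $\mc_r$ to $\mc_{g(r)}$ for the corresponding Möbius transformation $g$, using that the twist by $\vec x$ shifts slope by $\delta(\vec x)$ and that the tubular mutation carries the distinguished slopes as claimed, and then composes. To arrange the cocycle relations $\Phi_{q'',q}=\Phi_{q'',q'}\Phi_{q',q}$ and $\Phi_{q,q}=\mathrm{Id}$ I would fix once and for all, for each $q$, a distinguished autoequivalence $\Psi_q$ with $\Psi_q(\mc_0)=\mc_q$ and $\Psi_0=\mathrm{Id}$, and then simply define $\Phi_{q',q}:=\Psi_{q'}\Psi_q^{-1}$; the two stated identities are then immediate, and $\Phi_{q',q}(\mc_q)=\Psi_{q'}\Psi_q^{-1}(\mc_q)=\Psi_{q'}(\mc_0)=\mc_{q'}$.

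The main obstacle is the construction and the slope-bookkeeping of the single non-obvious generator — the tubular mutation autoequivalence that swaps the rôle of $\mc_0$ and $\mc_\infty$ — and checking it genuinely extends to an exact autoequivalence of $\md^b(\X)$ rather than merely an equivalence between $\coh\X$ and some heart; here I would appeal to the general theory of tilting in tubular families (the tilting object in $\coh_0\X$ whose endomorphism ring is again of tubular type, so that the derived equivalence it induces is an autoequivalence of $\md^b(\X)$), together with the computation that on the parametrizing $\mathbb P^1(\Q)$ it acts by $q\mapsto -1/q$. Everything else is formal: verifying that the chosen generators satisfy the $\mathrm{SL}_2(\Z)$ relations up to slope-preserving automorphisms (which is harmless, as slope-preserving automorphisms fix every $\mc_r$ by Lemma~\ref{slope-preserving automorphism}), and assembling the $\Psi_q$ and hence the $\Phi_{q',q}$. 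Since the statement only asserts existence of functors with the two listed properties, I would not pin down a canonical $\mathrm{SL}_2(\Z)$-lift; it suffices to exhibit the family $\{\Psi_q\}$ and set $\Phi_{q',q}=\Psi_{q'}\Psi_q^{-1}$.
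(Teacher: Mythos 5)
This statement is not proved in the paper at all: it is imported verbatim from Meltzer's memoir \cite[Theorem 5.2.6]{M2}, so there is no internal proof to compare against. Judged against the known argument in the literature, your plan is the right one and matches the machinery the paper itself quotes a few lines later: the autoequivalences $\sigma_1,\sigma_2$ with $\mu(\sigma_1 E)=\mu E+1$ and $\mu(\sigma_2 E)=\mu E/(1+\mu E)$ generate a copy of the braid group $B_3$ inside $\Aut(\md^b(\X))$ whose image in $\SL(2,\Z)$ acts on $\Q\cup\{\infty\}$ by M\"obius transformations, and transitivity of that action plus your normalization $\Phi_{q',q}:=\Psi_{q'}\Psi_q^{-1}$ (with $\Psi_q(\mc_0)=\mc_q$, $\Psi_0=\mathrm{Id}$) gives both the cocycle identity and $\Phi_{q,q}=\mathrm{Id}$ for free --- this last reduction is clean and is exactly how one should handle the ``moreover'' clause. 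Two caveats. First, the genuinely hard content --- that the tubular mutation (tilting at $\co$ resp.\ at the tubular family $\coh_0\X$) really is an exact autoequivalence of $\md^b(\X)$ rather than an equivalence onto some other heart, and that it acts on slopes by the stated fractional linear transformation --- is precisely the substance of \cite{LM1,M2}; your proposal defers to it, so what you have is a correct reduction to the cited literature rather than an independent proof, which is appropriate here since the paper does the same. Second, a small bookkeeping point you should make explicit: an autoequivalence of $\md^b(\X)$ a priori sends $\mc_q$ to some $\mc_{q'}[l]$, so each $\Psi_q$ must be corrected by a power of the shift functor to land in $\coh\X$ itself; this is harmless but needed for the statement as phrased.
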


Let $B_3$ be the braid group on three strands, \ie a group  with generators $\sigma_1,\sigma_2$ and with relations $\sigma_1\sigma_2\sigma_1=\sigma_2\sigma_1\sigma_2$. Then we have 
\begin{thm}\cite[Theorem 6.3]{LM2}\label{structure of automorphism}
 $\Aut(\md^b(\X))\cong (\Pic_0(\X)\ltimes\Aut(\X))\ltimes B_3.$
\end{thm}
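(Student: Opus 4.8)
The plan is to build a homomorphism $\rho\colon\Aut(\md^b(\X))\to\PSL_2(\Z)$ recording the action of an autoequivalence on slopes, to compute its kernel via Lemma~\ref{slope-preserving automorphism}, and then to recognise the resulting central extension as the defining one for $B_3$. Any $F\in\Aut(\md^b(\X))$ is exact and commutes with the Serre functor, hence acts on $K_0(\X)$ preserving the averaged Euler form. For tubular $\X$ this form factors through $(\rank,\deg)$ and induces a unimodular skew form on the rank-two lattice $\Lambda\cong\Z^2$ obtained from $K_0(\X)$ by dividing out its radical, the common kernel of $\rank$ and $\deg$; so $F$ acts on $\Lambda$ through the symplectic group $\SL_2(\Z)$. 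Since the slope of an indecomposable object is determined by its class in $\Lambda$, this descends to an action on the set of slopes $\P^1(\Q)=(\Lambda\setminus\{0\})/{\sim}$, and we obtain $\rho$, with image in $\PSL_2(\Z)$.

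To compute $\ker\rho$: if $\rho(F)=1$ then $F$ preserves every slope, hence sends each $\mc_q$ into $\bigcup_{m}\mc_q[m]$; applying $F$ to a nonzero morphism $\mc_q\to\mc_{q'}$ with $q<q'$ (which exists by Lemma~\ref{cor got by Riemann-Roch  theorem}) and using that over the hereditary category $\coh\X$ the slices $\mc_q,\mc_{q'}$ carry morphisms only in degree $0$ (since $\Ext^1_\X(\mc_q,\mc_{q'})=0$), one sees that $F$ sends every indecomposable of every $\mc_q$ to the \emph{same} shift $\mc_q[m]$. Then $F[-m]$ fixes the standard heart $\coh\X$ and all slopes, so it lies in $\Aut_{\mu}(\coh\X)$, and Lemma~\ref{slope-preserving automorphism} gives
\[
\ker\rho=\Aut_{\mu}(\md^b(\X))=\langle[1]\rangle\times\Aut_{\mu}(\coh\X)=\langle[1]\rangle\times\bigl(\Pic_0(\X)\ltimes\Aut(\X)\bigr),
\]
with $\langle[1]\rangle\cong\Z$ central in $\Aut(\md^b(\X))$. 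Since $\Pic_0(\X)$ is the kernel of $\delta\colon L(\mathbf{p})\to\Z$, hence torsion (as $L(\mathbf{p})$ has rank one), and $\Aut(\X)$ is a finite permutation group of the $t\ge 3$ parameter points, $\Aut_{\mu}(\coh\X)$ is finite; it is therefore exactly the torsion subgroup of $\ker\rho$, hence characteristic in $\ker\rho$ and so normal in $\Aut(\md^b(\X))$.

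For surjectivity of $\rho$ I would exhibit explicit autoequivalences: in every tubular weight type there is $\vec{x}\in L(\mathbf{p})$ with $\delta(\vec{x})=1$ (e.g. $\vec{x}_1$ for $(2,2,2,2)$ and $(3,3,3)$, $\vec{x}_2$ for $(2,4,4)$, $\vec{x}_3$ for $(2,3,6)$), so the twist $E\mapsto E(\vec{x})$ raises every slope by $1$ and realises a unipotent element, while a telescope functor $\Phi_{0,\infty}$ from Theorem~\ref{telescopic functor} sends the slope-$\infty$ line of $\Lambda$ to the slope-$0$ one; one checks these generate $\PSL_2(\Z)$. Next choose lifts $s_1,s_2\in\Aut(\md^b(\X))$, among such twist and telescope functors, of the images $\bar\sigma_1,\bar\sigma_2$ of the generators $\sigma_1,\sigma_2$ under the canonical surjection $B_3\to B_3/Z(B_3)\cong\PSL_2(\Z)$, and --- after possibly adjusting $s_1,s_2$ by elements of $\Aut_{\mu}(\coh\X)$, which leaves $\rho(s_i)$ unchanged --- verify the braid relation $s_1s_2s_1\cong s_2s_1s_2$ together with $(s_1s_2)^3\cong[1]$. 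Granting this, $\sigma_i\mapsto s_i$ defines a surjection $\phi\colon B_3\to\langle s_1,s_2\rangle$; since $\rho\circ\phi$ is the canonical surjection $B_3\to\PSL_2(\Z)$ with kernel $Z(B_3)=\langle(\sigma_1\sigma_2)^3\rangle$ and $\phi\bigl((\sigma_1\sigma_2)^3\bigr)=[1]$ has infinite order, $\phi$ is injective, so $\langle s_1,s_2\rangle\cong B_3$. Then $\rho|_{\langle s_1,s_2\rangle}$ is onto $\PSL_2(\Z)$ with kernel $\langle[1]\rangle$, whence $\langle s_1,s_2\rangle\cap\Aut_{\mu}(\coh\X)=\langle[1]\rangle\cap\Aut_{\mu}(\coh\X)=1$; and given any $F$, picking $g\in\langle s_1,s_2\rangle$ with $\rho(g)=\rho(F)$ yields $Fg^{-1}\in\langle[1]\rangle\times\Aut_{\mu}(\coh\X)$, so (using $[1]\in\langle s_1,s_2\rangle$) $F\in\Aut_{\mu}(\coh\X)\cdot\langle s_1,s_2\rangle$. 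Together with the normality of $\Aut_{\mu}(\coh\X)=\Pic_0(\X)\ltimes\Aut(\X)$ this gives $\Aut(\md^b(\X))\cong\bigl(\Pic_0(\X)\ltimes\Aut(\X)\bigr)\ltimes B_3$.

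The main obstacle is the braid relation together with $(s_1s_2)^3\cong[1]$: everything else above is formal, coming from the slope/Euler-form structure and Lemma~\ref{slope-preserving automorphism}, and the abstract classification of central $\Z$-extensions of $\PSL_2(\Z)$ leaves several candidates without singling out $B_3$. So the crux is an explicit computation of the composite of the chosen telescope and twist functors on a fixed exceptional (or semistable) configuration in $\coh\X$, checking $s_1s_2s_1\cong s_2s_1s_2$ and $(s_1s_2)^3\cong[1]$ on the nose; a convenient route is to transport the question along an equivalence $\md^b(\X)\simeq\md^b(A)$ with $A$ a tubular canonical algebra and use the known relations among the corresponding spherical-type twist functors. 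The outer semidirect product in the statement then refers to the conjugation action of $\langle s_1,s_2\rangle\cong B_3$ on the normal subgroup $\Aut_{\mu}(\coh\X)=\Pic_0(\X)\ltimes\Aut(\X)$.
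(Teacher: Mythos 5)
This statement is not proved in the paper at all: it is quoted verbatim from \cite[Theorem 6.3]{LM2}, so there is no in-paper argument to compare yours against; the relevant benchmark is Lenzing--Meltzer's original proof, which indeed proceeds essentially along the lines you sketch (slope/$K_0$ action onto a rank-two quotient lattice, identification of the slope-preserving kernel as in Lemma~\ref{slope-preserving automorphism}, and explicit tubular mutation/telescopic functors realising the braid generators). Your formal scaffolding is sound: the torsion argument showing $\Pic_0(\X)\ltimes\Aut(\X)$ is finite, hence characteristic in $\ker\rho$ and normal in $\Aut(\md^b(\X))$, is a clean way to get normality, and the group-theoretic bookkeeping deducing the semidirect product decomposition from $\langle s_1,s_2\rangle\cong B_3$, $\langle s_1,s_2\rangle\cap\Aut_\mu(\coh\X)=1$ and $\rho$-surjectivity is correct.

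However, there is a genuine gap, and you name it yourself: the entire content of the theorem beyond formalities is the verification that suitable lifts $s_1,s_2$ satisfy the braid relation and that $(s_1s_2)^3$ is identified with the shift $[1]$ (possibly after absorbing an element of $\Aut_\mu(\coh\X)$). Everything after ``Granting this'' is conditional on that computation, and without it your argument only shows that $\Aut(\md^b(\X))/\bigl(\Pic_0(\X)\ltimes\Aut(\X)\bigr)$ is \emph{some} central extension of $\PSL_2(\Z)$ by $\Z$; since $H^2(\PSL_2(\Z);\Z)\cong\Z/6$, there are several pairwise non-isomorphic such extensions ($\Z\times\PSL_2(\Z)$, extensions through $\SL_2(\Z)$, $B_3$, \dots), and moreover even identifying the extension class does not by itself produce the complement $\langle s_1,s_2\rangle$ needed for the semidirect product with the finite normal subgroup. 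A second, smaller gap: in computing $\ker\rho$ you need that a slope-preserving $F$ sends \emph{all} of a fixed $\mc_q$ to a single shift $\mc_q[m]$ before comparing different slopes; this requires an argument with homogeneous tubes as in the proof of Lemma~\ref{mor under automorphism}, not just the degree-$0$ morphism between $\mc_q$ and $\mc_{q'}$. Be aware also that the explicit matrices recorded after Theorem~\ref{structure of automorphism} (two upper/lower unipotents with the same sign) do not literally satisfy the braid relation in $\SL_2(\Z)$, so some care with sign and transpose conventions is needed when you pin down $\rho(s_1),\rho(s_2)$ and the image of $(\sigma_1\sigma_2)^3$.
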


By \cite{LM2, M2}, there are automorphisms $\sigma_1,\sigma_2$ of $\md^b(\X)$ such that  $$\mu (\sigma_1(E))=\mu E+1\ \ \  and \ \ \ \mu (\sigma_2(E))=\frac{\mu E}{1+\mu E}$$  for any  $E\in\ind \md^b(\X)$ and the subgroup generated by $\sigma_1,\sigma_2$ is isomorphic to $B_3$. There are also an group epimorphism $f:B_3\ra \SL(2,\Z)$ defined by $f(\sigma_1)=\left(\begin{array}{cc}1&1 \\0&1\end{array}\right) $ and $f(\sigma_2)=\left(\begin{array}{cc}1&0 \\1&1\end{array}\right).$ {According to} Theorem~\ref{structure of automorphism}, one can get the following {lemma} easily.

\begin{lem}\label{LM2}
 For any linear fractional transformation $\varphi_{w}(x)=\frac{cx+a}{dx+b}$
with  $w=\left(\begin{array}{cc}c&a \\d&b\end{array}\right)\in\SL(2,\Z)$,  there is an automorphism $\overline{\varphi}_w$ of $\md^b({\X})$,  such that $\overline{\varphi}_w$ coincides with the action of $\varphi_w$ on slopes, \ie for any  $E\in\ind \md^b(\X)$, $\mu (\overline{\varphi}_w(E))=\varphi_w(\mu E)$.
\end{lem}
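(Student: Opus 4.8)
The plan is to derive Lemma~\ref{LM2} directly from Theorem~\ref{structure of automorphism} together with the explicit action on slopes of the generators $\sigma_1,\sigma_2$ of the braid subgroup $B_3\subset\Aut(\md^b(\X))$. First I would record the effect of $\sigma_1$ and $\sigma_2$ on slopes as linear fractional transformations: $\sigma_1$ acts by $x\mapsto x+1$, which is $\varphi_w$ for $w=f(\sigma_1)=\left(\begin{smallmatrix}1&1\\0&1\end{smallmatrix}\right)$, and $\sigma_2$ acts by $x\mapsto \frac{x}{1+x}$, which is $\varphi_w$ for $w=f(\sigma_2)=\left(\begin{smallmatrix}1&0\\1&1\end{smallmatrix}\right)$, reading off the convention $\varphi_{\left(\begin{smallmatrix}c&a\\d&b\end{smallmatrix}\right)}(x)=\frac{cx+a}{dx+b}$ given in the statement. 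The key compatibility to check is that the assignment $w\mapsto\varphi_w$ is a group homomorphism from $\SL(2,\Z)$ to the group of linear fractional transformations under composition with this particular (slightly nonstandard, transposed-looking) parametrization; this is a routine verification that $\varphi_{w_1}\circ\varphi_{w_2}=\varphi_{w_1 w_2}$, and I would state it and leave the matrix bookkeeping to the reader.

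Next I would invoke the epimorphism $f\colon B_3\twoheadrightarrow\SL(2,\Z)$ with $f(\sigma_1),f(\sigma_2)$ as above. Given an arbitrary $w\in\SL(2,\Z)$, surjectivity of $f$ gives some $\beta\in B_3$ with $f(\beta)=w$; writing $\beta$ as a word in $\sigma_1^{\pm1},\sigma_2^{\pm1}$ and using that each $\sigma_i$ realizes the slope action $\varphi_{f(\sigma_i)}$ together with the homomorphism property of $w\mapsto\varphi_w$, the corresponding automorphism $\overline\varphi_w\in\Aut(\md^b(\X))$ obtained by the same word acts on slopes exactly by $\varphi_w$. One should note that the choice of lift $\beta$ is immaterial for the purpose of the lemma: two lifts differ by an element of $\ker f$, and any such element acts on slopes by $\varphi_{I}=\mathrm{id}$, so the slope action is well-defined as a function of $w$ alone; I would include this remark to make the statement clean, though strictly the lemma only asserts existence of one such automorphism.

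The only genuine subtlety — and the step I expect to require the most care — is matching conventions: confirming that the slope actions $\mu(\sigma_1 E)=\mu E+1$ and $\mu(\sigma_2 E)=\frac{\mu E}{1+\mu E}$ stated after Theorem~\ref{structure of automorphism} correspond to $\varphi_{f(\sigma_1)}$ and $\varphi_{f(\sigma_2)}$ under the exact formula $\varphi_w(x)=\frac{cx+a}{dx+b}$ with $w=\left(\begin{smallmatrix}c&a\\d&b\end{smallmatrix}\right)$, and that composition of automorphisms matches multiplication in $\SL(2,\Z)$ in the same order (as opposed to the opposite order). With the parametrization as written one checks $\varphi_{\left(\begin{smallmatrix}1&1\\0&1\end{smallmatrix}\right)}(x)=\frac{x+1}{1}=x+1$ and $\varphi_{\left(\begin{smallmatrix}1&0\\1&1\end{smallmatrix}\right)}(x)=\frac{x}{x+1}$, so the conventions are consistent. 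Everything else is a direct consequence of Theorem~\ref{structure of automorphism}, the displayed slope formulas for $\sigma_1,\sigma_2$, and the surjectivity of $f$, so the proof is short.

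\begin{proof}
Recall from \cite{LM2,M2} that $\Aut(\md^b(\X))$ contains a subgroup isomorphic to $B_3$ generated by automorphisms $\sigma_1,\sigma_2$ with
\[
\mu(\sigma_1(E))=\mu E+1,\qquad \mu(\sigma_2(E))=\frac{\mu E}{1+\mu E}
\]
for all $E\in\ind\md^b(\X)$, and that the group epimorphism $f\colon B_3\ra\SL(2,\Z)$ is given by $f(\sigma_1)=\left(\begin{smallmatrix}1&1\\0&1\end{smallmatrix}\right)$ and $f(\sigma_2)=\left(\begin{smallmatrix}1&0\\1&1\end{smallmatrix}\right)$. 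With the convention $\varphi_{\left(\begin{smallmatrix}c&a\\d&b\end{smallmatrix}\right)}(x)=\frac{cx+a}{dx+b}$, a direct computation gives
\[
\varphi_{f(\sigma_1)}(x)=\frac{x+1}{1}=x+1,\qquad \varphi_{f(\sigma_2)}(x)=\frac{x}{x+1},
\]
so $\sigma_i$ acts on slopes by $\varphi_{f(\sigma_i)}$ for $i=1,2$.

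One checks directly from the formula that $w\mapsto\varphi_w$ satisfies $\varphi_{w_1}\circ\varphi_{w_2}=\varphi_{w_1w_2}$ for all $w_1,w_2\in\SL(2,\Z)$; that is, it is a homomorphism from $\SL(2,\Z)$ to the group of linear fractional transformations under composition. Hence for any word $\beta=\sigma_{i_1}^{\epsilon_1}\cdots\sigma_{i_m}^{\epsilon_m}$ in $B_3$, the automorphism $\beta$ of $\md^b(\X)$ acts on slopes by
\[
\mu(\beta(E))=\bigl(\varphi_{f(\sigma_{i_1})}^{\epsilon_1}\circ\cdots\circ\varphi_{f(\sigma_{i_m})}^{\epsilon_m}\bigr)(\mu E)=\varphi_{f(\beta)}(\mu E).
\]

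Now let $w\in\SL(2,\Z)$ be arbitrary. Since $f$ is surjective, choose $\beta\in B_3$ with $f(\beta)=w$, viewed as an automorphism of $\md^b(\X)$ via the inclusion $B_3\hookrightarrow\Aut(\md^b(\X))$ of Theorem~\ref{structure of automorphism}. Setting $\overline{\varphi}_w:=\beta$, the computation above yields $\mu(\overline{\varphi}_w(E))=\varphi_{f(\beta)}(\mu E)=\varphi_w(\mu E)$ for every $E\in\ind\md^b(\X)$, as required. (If $\beta'$ is another lift of $w$, then $\beta'\beta^{-1}\in\ker f$ acts on slopes by $\varphi_{I}=\mathrm{id}$, so the slope action depends only on $w$.)
\end{proof}
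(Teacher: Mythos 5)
Your proof is correct and follows exactly the route the paper intends: the paper gives no explicit proof, merely asserting the lemma follows ``easily'' from the generators $\sigma_1,\sigma_2$ of $B_3\subset\Aut(\md^b(\X))$, their slope actions, and the epimorphism $f\colon B_3\to\SL(2,\Z)$, which is precisely the argument you spell out. Your verification that the parametrization $w\mapsto\varphi_w$ is compatible with matrix multiplication and that the generators realize $\varphi_{f(\sigma_i)}$ fills in the details the paper omits, and the remark on independence of the choice of lift is a harmless bonus.
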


Dually, by Lemma~\ref{slope-preserving automorphism} and Theorem~\ref{structure of automorphism}, one can get: 
\begin{lem}\cite{LM2}\label{LM2-1}
Let $\overline{\varphi}$ be an automorphism  of $\md^b({\X})$, then there exists a  linear fractional transformation $\varphi(x)=\frac{cx+a}{dx+b}$ with  $\left(\begin{array}{cc}c&a \\d&b\end{array}\right)\in\SL(2,\Z)$,  such that the action of  $\varphi$ on the slopes  coincides with $\overline{\varphi}$. 
\end{lem}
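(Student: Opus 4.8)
The idea is to invoke the structure theorem for $\Aut(\md^b(\X))$ (Theorem~\ref{structure of automorphism}) together with the explicit description of the slope action of the generators of $B_3$, and to show that the ``slope part'' of any automorphism factors through the homomorphism $f\colon B_3\to\SL(2,\Z)$ and the $\PSL(2,\Z)$-action on $\Q\cup\{\infty\}$ by linear fractional transformations. First I would write, by Theorem~\ref{structure of automorphism}, an arbitrary $\overline{\varphi}\in\Aut(\md^b(\X))$ as $\overline{\varphi}=g\cdot\beta$ with $g\in\Pic_0(\X)\ltimes\Aut(\X)$ and $\beta\in B_3$ (viewed inside $\Aut(\md^b(\X))$ via $\sigma_1,\sigma_2$), up to the translation functor $[1]$ which by convention acts trivially on slopes ($\mu(E[i])=\mu E$). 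By Lemma~\ref{slope-preserving automorphism}, $g\in\Aut_\mu(\coh\X)$, so $g$ acts trivially on slopes; hence $\mu(\overline{\varphi}(E))=\mu(\beta(E))$ for all $E\in\ind\md^b(\X)$, and it suffices to treat $\beta\in B_3$.

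Next I would determine the slope action of $\beta$. Writing $\beta$ as a word in $\sigma_1^{\pm1},\sigma_2^{\pm1}$ and using the given formulas $\mu(\sigma_1(E))=\mu E+1$, $\mu(\sigma_2(E))=\frac{\mu E}{1+\mu E}$ (together with the corresponding formulas for the inverses, obtained by solving), each generator acts on $q=\mu E\in\Q\cup\{\infty\}$ by the linear fractional transformation $\varphi_w$ attached to $f(\sigma_i)$: indeed $f(\sigma_1)=\left(\begin{smallmatrix}1&1\\0&1\end{smallmatrix}\right)$ gives $x\mapsto x+1$ and $f(\sigma_2)=\left(\begin{smallmatrix}1&0\\1&1\end{smallmatrix}\right)$ gives $x\mapsto \frac{x}{x+1}$, matching the displayed formulas under the convention $\varphi_w(x)=\frac{cx+a}{dx+b}$ for $w=\left(\begin{smallmatrix}c&a\\d&b\end{smallmatrix}\right)$. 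Since the assignment $\sigma_i\mapsto(\text{slope action of }\sigma_i)$ and the assignment $\sigma_i\mapsto \varphi_{f(\sigma_i)}$ both respect composition, and the map $\SL(2,\Z)\to\operatorname{PGL}$ sending $w$ to $\varphi_w$ is a homomorphism (one checks $\varphi_{w}\circ\varphi_{w'}=\varphi_{ww'}$ from the convention), induction on word length shows $\mu(\beta(E))=\varphi_{f(\beta)}(\mu E)$. Setting $w:=f(\beta)=\left(\begin{smallmatrix}c&a\\d&b\end{smallmatrix}\right)\in\SL(2,\Z)$ and $\varphi:=\varphi_w$ then gives the claim: $\mu(\overline{\varphi}(E))=\varphi(\mu E)$ for all $E\in\ind\md^b(\X)$.

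The only genuinely delicate point is bookkeeping of conventions: one must check that the matrix convention $\varphi_w(x)=\frac{cx+a}{dx+b}$ is multiplicative in the order that matches composition of functors (i.e.\ that $f$ is a homomorphism into the group of fractional transformations with the chosen convention, not its opposite), and that the formulas for $\sigma_1,\sigma_2$ are consistent with $f(\sigma_1),f(\sigma_2)$ as written — a sign or transpose error here would only change $w$ to a conjugate or inverse, still landing in $\SL(2,\Z)$, so the statement is robust, but the proof should pin it down. Everything else is a routine induction, so I expect this normalization check to be the main (minor) obstacle; there is no hard analysis involved, the lemma being essentially a translation of Theorem~\ref{structure of automorphism} and Lemma~\ref{slope-preserving automorphism} into the language of slopes.
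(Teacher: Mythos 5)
Your proposal is correct and follows exactly the route the paper indicates: the paper gives no written proof, merely stating that the lemma follows from Lemma~\ref{slope-preserving automorphism} and Theorem~\ref{structure of automorphism} (citing \cite{LM2}), which is precisely the decomposition $\overline{\varphi}=g\cdot\beta$ with $g$ slope-preserving and $\beta\in B_3$ acting on slopes through $f\colon B_3\to\SL(2,\Z)$ that you carry out. Your normalization check of the convention $\varphi_w(x)=\frac{cx+a}{dx+b}$ against $f(\sigma_1),f(\sigma_2)$ is consistent with the paper's displayed formulas, so there is nothing to add.
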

Using Lemma~\ref{slope-preserving automorphism} and Theorem~\ref{telescopic functor}, it is easy to get
\begin{lem}\label{slop-preserving functor}
For any two quasi-simple rigid objects $X,Y\in\coh \X$ with same $\tau$-period and slope, there is a  slope-preserving  automorphism $F$ of $\md^b(\X)$ such that $F(X)=Y$. 
\end{lem}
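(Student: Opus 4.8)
The plan is to reduce the statement to the existence results on automorphisms of $\md^b(\X)$ that were just recorded, namely Lemma~\ref{slope-preserving automorphism}, Theorem~\ref{telescopic functor} and Lemma~\ref{slop-preserving functor}, and to build the desired automorphism $F$ as a composite. First I would use Theorem~\ref{telescopic functor}: since $X$ and $Y$ have the same slope $q$, there is no need to change slopes globally, but $X$ and $Y$ need not lie in the same tube of $\mc_q$. So the first step is to produce a slope-preserving self-equivalence of $\coh\X$ (or of $\md^b(\X)$) that moves the tube $\mt_X$ to the tube $\mt_Y$ inside $\mc_q$. By Theorem~\ref{telescopic functor} applied with $q'=q$ one gets $\Phi_{q,q}=\mathrm{Id}$, which is not directly enough, so instead I would pass through another slope $q'$: the functor $\Phi_{q',q}$ sends $\mc_q$ equivalently onto $\mc_{q'}$, and on $\mc_{q'}\simeq\coh_0\X$ the tubes are indexed by $\P^1$. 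The group of slope-preserving automorphisms acts on these indices (through $\Aut(\X)$ in Lemma~\ref{slope-preserving automorphism}), and one must check this action is transitive on the homogeneous tubes of a given rank — here on tubes of the common rank equal to the $\tau$-period $d$ of $X$ and $Y$. Conjugating such an index-changing automorphism back by $\Phi_{q',q}^{-1}$ gives a slope-preserving automorphism $F_1$ of $\md^b(\X)$ with $F_1(\mt_X)=\mt_Y$.

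After this reduction I may assume $X$ and $Y$ lie in the same standard tube $\mt$ of rank $d$ and are both quasi-simple. Within a single tube the remaining freedom is the $\tau$-action: the $d$ quasi-simple objects of $\mt$ form a single $\langle\tau\rangle$-orbit, so some power $\tau^j$ of the Auslander--Reiten translation — which is slope-preserving, being induced by the line-bundle twist $(\vec w)$ of degree zero in the tubular case — carries $X$ to $Y$. Composing, $F=\tau^{\,j}\circ F_1$ is the desired slope-preserving automorphism with $F(X)=Y$. I would also remark that rigidity of $X$ and $Y$ is what guarantees they are quasi-simple in tubes whose rank is at least $2$ (or homogeneous with $X,Y$ quasi-simple by hypothesis), so that the "quasi-simple" and "same $\tau$-period" hypotheses are exactly what make the $\tau$-orbit argument applicable.

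The main obstacle I anticipate is the transitivity claim in the first step: showing that the slope-preserving automorphism group of $\coh\X$ acts transitively on the set of standard tubes of a fixed rank $d$ in $\coh_0\X$ (equivalently in any $\mc_q$). Concretely this is a statement about $\Aut(\X)$ acting on the parameter points $\lambda_1,\dots,\lambda_t$ together with the ordinary points of $\P^1$: for homogeneous tubes ($d=1$) one needs the automorphism group of $\P^1$ fixing the weighted-point configuration to act transitively on the ordinary points, while for the exceptional tubes of rank $p_i$ one needs transitivity on the weighted points sharing a given weight. In the four tubular weight types $(2,2,2,2),(3,3,3),(2,4,4),(2,3,6)$ the relevant symmetry groups are classical and this is checkable case by case; I would either invoke the description of $\Aut(\X)$ from \cite{LM1,LM2,M2} or, more cleanly, deduce the needed transitivity directly from Lemma~\ref{LM2} by choosing a matrix $w\in\SL(2,\Z)$ whose fractional linear action permutes slopes suitably and combining with $\Phi_{q',q}$ — though one must be careful that $\overline\varphi_w$ only controls slopes, not tube indices, so the honest argument still has to go through $\Aut(\X)$. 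Once transitivity is in hand the rest is the short $\tau$-orbit bookkeeping described above.
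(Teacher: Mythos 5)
Your overall strategy is the one the paper intends (the paper offers no written proof, only the citation of Lemma~\ref{slope-preserving automorphism} and Theorem~\ref{telescopic functor}): conjugate by a telescopic functor $\Phi_{q',q}$ into a fixed $\mc_{q'}$, move the tube of $X$ onto the tube of $Y$ by an element of $\Aut_{\mu}(\coh\X)=\Pic_0\X\ltimes\Aut(\X)$, and finish with a power of $\tau$ inside the tube; the conjugate is slope-preserving because by Lemma~\ref{LM2-1} it acts on slopes as $\psi^{-1}\circ\mathrm{id}\circ\psi$, and the $\tau$-step is legitimate since $\delta(\vec{w})=0$ in the tubular case, so $\tau\in\Pic_0\X$.

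There is, however, one step in your write-up that is false as stated: you say that for $d=1$ ``one needs the automorphism group of $\P^1$ fixing the weighted-point configuration to act transitively on the ordinary points.'' That group is finite (e.g.\ for generic type $(2,2,2,2)$ it is the Klein four-group realized by the three double transpositions of $\{0,1,\infty,\lambda\}$), so it certainly does not act transitively on the infinitely many ordinary points, and no choice of $w\in\SL(2,\Z)$ in Lemma~\ref{LM2} can rescue this, as you yourself note. The repair is that this case is vacuous: in a tube of rank $r$ an indecomposable is rigid iff its quasi-length is at most $r-1$, so a rigid quasi-simple object never lies in a homogeneous tube. Hence the common $\tau$-period $d$ equals some $p_i\geq 2$, and the transitivity you need is only on the finitely many weighted points carrying the weight $p_i$, which one checks directly for the four tubular types $(2,2,2,2)$, $(3,3,3)$, $(2,4,4)$, $(2,3,6)$ (all double transpositions of four points, all permutations of three points, and the transposition of the two weight-$4$ points are realized by M\"obius transformations). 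Your parenthetical ``or homogeneous with $X,Y$ quasi-simple by hypothesis'' should therefore be deleted rather than accommodated; with that correction the argument is complete.
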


 {According to} Lemma~\ref{cor got by Riemann-Roch  theorem}, one can get
\begin{lem}\label{mor under automorphism}
 Let $F\in\Aut(\md^b(\X))$, $X,Y\in\ind\X$  such that $\mu X<\mu Y$. Suppose that $F(X)\in\coh\X[l], F[Y]\in\coh\X[m]$ for some integers $l,m$. Then either $m=l$ or $m=l+1$. Moreover, if $m=l$, then $\mu(F(X))<\mu(F(Y))$. Else if  $m=l+1$, then $\mu(F(X))>\mu(F(Y))$.
\end{lem}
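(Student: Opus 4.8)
The plan is to route everything through the averaged Euler form $\overline{\chi}$, exploiting that it is invariant under $F$ and that, in the tubular case, it records the difference of slopes. As $F$ is a triangle autoequivalence of $\md^b(\X)$, it commutes with the shift $[1]$ and preserves the bilinear Euler form $\chi(-,-)=\sum_i(-1)^i\dim_k\Hom_{\md^b(\X)}(-,-[i])$, which is intrinsic to the triangulated structure; moreover the Serre functor of $\md^b(\X)$ is unique up to isomorphism and, by Serre duality $\D\Hom_{\X}(E,F[1])\cong\Hom_{\X}(F,\tau E)$, it differs from $\tau$ only by the shift $[1]$, so that $F\tau\cong\tau F$. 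Combining these, for all $U,V\in\md^b(\X)$ one gets
$$\overline{\chi}(FU,FV)=\sum_{j=0}^{p-1}\chi(\tau^jFU,FV)=\sum_{j=0}^{p-1}\chi(F\tau^jU,FV)=\sum_{j=0}^{p-1}\chi(\tau^jU,V)=\overline{\chi}(U,V).$$
From $\Hom_{\md^b(\X)}(C[l],D[m+i])\cong\Hom_{\md^b(\X)}(C,D[m-l+i])$ one also gets $\chi(C[l],D[m])=(-1)^{m-l}\chi(C,D)$, hence $\overline{\chi}(C[l],D[m])=(-1)^{m-l}\overline{\chi}(C,D)$ for sheaves $C,D$; and recall that for tubular $\X$ one has $\overline{\chi}(C,D)=\mu D-\mu C$ for all $C,D\in\coh\X$ (Riemann--Roch).

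Next I would bound $m-l$. Since $\X$ is tubular, $X$ and $Y$ are semistable, say $X\in\mc_q$ and $Y\in\mc_{q'}$ with $q=\mu X<q'=\mu Y$. Arguing exactly as in the proof of Lemma~\ref{cor got by Riemann-Roch  theorem}, each $\tau^{j+1}X$ again lies in $\mc_q$ (the tubes being $\tau$-stable), so $\Ext^1_{\X}(\tau^jX,Y)\cong\D\Hom_{\X}(Y,\tau^{j+1}X)=0$ because $\Hom_{\X}(\mc_{q'},\mc_q)=0$; therefore
$$0<\mu Y-\mu X=\overline{\chi}(X,Y)=\sum_{j=0}^{p-1}\dim_k\Hom_{\X}(\tau^jX,Y),$$
and hence $\Hom_{\X}(\tau^{j_0}X,Y)\neq0$ for some $0\le j_0\le p-1$. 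Writing $F(X)=A[l]$ and $F(Y)=B[m]$ with $A,B\in\ind\X$ (legitimate since $X,Y$ are indecomposable), we have $F(\tau^{j_0}X)=\tau^{j_0}A[l]$, so
$$0\neq\Hom_{\X}(\tau^{j_0}X,Y)\cong\Hom_{\md^b(\X)}(\tau^{j_0}A[l],B[m])\cong\Hom_{\md^b(\X)}(\tau^{j_0}A,B[m-l]).$$
Since $\coh\X$ is hereditary, $\Hom_{\md^b(\X)}(\tau^{j_0}A,B[s])=0$ unless $s\in\{0,1\}$, whence $m-l\in\{0,1\}$, \ie $m=l$ or $m=l+1$.

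The slope comparison then falls out of the invariance of $\overline{\chi}$:
$$\overline{\chi}(F(X),F(Y))=\overline{\chi}(A[l],B[m])=(-1)^{m-l}\overline{\chi}(A,B)=(-1)^{m-l}\big(\mu(F(Y))-\mu(F(X))\big),$$
while $\overline{\chi}(F(X),F(Y))=\overline{\chi}(X,Y)=\mu Y-\mu X>0$; so $m=l$ forces $\mu(F(X))<\mu(F(Y))$ and $m=l+1$ forces $\mu(F(X))>\mu(F(Y))$. The main thing to be careful about is the first paragraph: one must check that $F$ genuinely commutes with $\tau$ (so that it respects the averaged form $\overline{\chi}$, not merely $\chi$), and in the second paragraph one must use a true $\Hom$-nonvanishing rather than a nonzero Euler characteristic, since only the former constrains $m-l$; beyond that everything is formal. (Alternatively one could use Lemma~\ref{LM2-1} to present the action of $F$ on slopes as a fractional linear transformation $\varphi$ with $\varphi'>0$ and split into cases according to whether the pole of $\varphi$ separates $q$ from $q'$, but matching ``the pole separates'' with ``$m=l+1$'' is no easier than the computation above.)
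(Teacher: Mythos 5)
Your proof is correct, but it takes a genuinely different route from the paper's. The paper first proves the stronger structural claim that $F$ carries the whole slope class $\mc_{\mu X}$ onto $\mc_{\mu(F(X))}[l]$ (using Lemma~\ref{LM2-1} together with maps to and from homogeneous tubes), and then reads off both the bound on $m-l$ and the slope comparison from $\Hom$-(non)vanishing between slope classes via Lemma~\ref{cor got by Riemann-Roch  theorem} and Serre duality. You instead route everything through the single identity $\overline{\chi}(F(X),F(Y))=\overline{\chi}(X,Y)$, which rests on the standard facts that a triangle autoequivalence preserves $\chi$ and commutes with the Serre functor $\tau[1]$, hence with $\tau$; combined with $\overline{\chi}(C[l],D[m])=(-1)^{m-l}\overline{\chi}(C,D)$ and tubular Riemann--Roch, one numerical computation settles the sign, while the bound $m-l\in\{0,1\}$ comes from exhibiting an actual nonzero morphism $\tau^{j_0}X\to Y$ (you are right that this last step needs genuine $\Hom$-nonvanishing, not just $\overline{\chi}\neq 0$). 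Your argument is shorter and avoids Lemma~\ref{LM2-1} and the homogeneous-tube machinery entirely, at the cost of not producing the statement $F(\mc_{\mu X})=\mc_{\mu(F(X))}[l]$, which the paper's proof yields as a by-product. One small point of care, shared with the paper's own conventions: when one of the objects has rank zero the formula $\overline{\chi}(X,Y)=\mu Y-\mu X$ should be read as $\overline{\chi}(X,Y)=\rank(X)\deg(Y)-\rank(Y)\deg(X)$, whose sign still matches the ordering of slopes, so your sign argument goes through unchanged.
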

\begin{proof}
Firstly, we claim that $F(\mc_{\mu X})=\mc_{\mu(F(X))}[l]$.

Let $X'$ be an indecomposable object lying in $\mc_{\mu X}$. Suppose that $F(X')\in\coh\X[l']$ for some integer $l'$. Since  $\mu X= \mu X'$, by Lemma~\ref{LM2-1}, we have $\mu(F(X))= \mu(F(X'))$.

Let $Z$ be an indecomposable object lying  in a homogenous tube in $\coh\X$ such that $\mu Z<\mu X$. By Lemma~\ref{LM2-1}, we have $\mu(F(X))\neq \mu(F(Z))$. By Lemma~\ref{mor from homogenous tube}, we have $\Hom_{\X}(Z,X)\neq 0$ and $\Hom_{\X}(Z,X')\neq 0$. Then $\Hom_{\md^b(\X)}(F(Z),F(X))\neq 0$ and $\Hom_{\md^b(\X)}(F(Z),F(X'))\neq 0$.  Since $\coh\X$ is hereditary and $\mu(F(X'))= \mu(F(X))\neq \mu(F(Z))$, we must have $l'=l$. Then  $F(X')\in\mc_{\mu(F(X))}[l]$. Hence,  $F(\mc_{\mu X})\subset\mc_{\mu(F(X))}[l]$. Let $G$ be the automorphism of $\md^b(\X)$ such that $G\circ F=Id$. Since any automorphism of $\md^b(\X)$ is commuted with the translation functor $[1]$, by $F(X)[-l]\in \coh\X$, $G (F(X)[-l])=X[-l]\in \coh\X[-l]$, one can get $$G(\mc_{\mu(F(X))}[l])=G(\mc_{\mu(F(X)[-l])})[l]\subset \mc_{\mu(G\circ F(X))}[-l][l]=\mc_{\mu X}.$$ Therefore, we must have $F(\mc_{\mu X})=\mc_{\mu(F(X))}[l]$ since $G\circ F=Id$. Similarly, we also have $F(\mc_{\mu Y})=\mc_{\mu(F(Y))}[m]$.

In the next, we are going to prove the Lemma.

 Since $\mu X< \mu Y$, by Lemma~\ref{LM2-1}, we have $\mu(F(X))\neq \mu(F(Y))$ while  by Lemma~\ref{cor got by Riemann-Roch  theorem}, we have $\Hom_{\X}(\mc_{\mu X},\mc_{\mu Y})\neq 0$. Then
$$\Hom_{\md^b(\X)}(\mc_{\mu(F(X))}[l],\mc_{\mu(F(Y))}[m])\cong\Hom_{\md^b(\X)}(F(\mc_{\mu X}),F(\mc_{\mu Y}))\cong \Hom_{\X}(\mc_{\mu X},\mc_{\mu Y}) \neq 0,$$ then $m=l$ or $m=l+1$.

If $m=l$, then 
\begin{eqnarray*}
0\neq\Hom_{\md^b(\X)}(\mc_{\mu(F(X))}[l],\mc_{\mu(F(Y))}[m])&\cong&\Hom_{\md^b(\X)}(\mc_{\mu(F(X))},\mc_{\mu(F(Y))})\\
&\cong& \Hom_{\X}(\mc_{\mu(F(X))},\mc_{\mu(F(Y))}).
\end{eqnarray*}
Since $\mu(F(X))\neq \mu(F(Y))$, by Lemma~\ref{cor got by Riemann-Roch  theorem}, $\mu(F(X))<\mu(F(Y))$.

If $m=l+1$, then 
\begin{eqnarray*}
0\neq\Hom_{\md^b(\X)}(\mc_{\mu(F(X))}[l],\mc_{\mu(F(Y))}[m])&\cong&\Hom_{\md^b(\X)}(\mc_{\mu(F(X))},\mc_{\mu(F(Y))}[1])\\
&\cong& \D\Hom_{\X}(\mc_{\mu(F(Y))},\tau \mc_{\mu(F(X))})
\end{eqnarray*}
Then by $\mc_{\mu(F(X)}$ is stable under Auslander-Reiten translation $\tau$, we have 
\begin{eqnarray*}
0\neq\Hom_{\md^b(\X)}(\mc_{\mu(F(X))}[l],\mc_{\mu(F(Y))}[m])&\cong& \D\Hom_{\X}(\mc_{\mu(F(Y))},\tau \mc_{\mu(F(X))})\\
&\cong& \D\Hom_{\X}(\mc_{\mu(F(Y))},\mc_{\mu(F(X))}).
\end{eqnarray*}
Since $\mu(F(X))\neq \mu(F(Y))$, by Lemma~\ref{cor got by Riemann-Roch  theorem}, $\mu(F(Y))<\mu(F(X))$.
\end{proof}


{The following lemma is useful to construct matrices in $\SL(2,\Z)$.}

\begin{lem}\label{exist of abcd}
For integers $a,b$ such that $0< a<b$ and $(a,b)=1$, there are integers $c,d$ such that $bc-ad=1$ and $0<c\leq d<b$ and $c\leq a$.
\end{lem}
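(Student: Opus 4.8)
## Proof proposal for Lemma~\ref{exist of abcd}

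The plan is to use the extended Euclidean algorithm to produce one solution of the Bézout equation $bc - ad = 1$ and then correct it by the obvious translation $c \mapsto c + ka$, $d \mapsto d + kb$ to land in the required range. Since $(a,b)=1$, there exist integers $c_0, d_0$ with $bc_0 - ad_0 = 1$. For any integer $k$, the pair $(c_0 + ka, d_0 + kb)$ is also a solution, because $b(c_0+ka) - a(d_0+kb) = bc_0 - ad_0 = 1$. So the whole affair is to choose $k$ suitably.

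First I would normalise $c$ to the range $0 \le c < a$. Because $(a,b)=1$ and $a \ge 1$, we may pick the unique $k$ with $0 \le c := c_0 + ka \le a-1$; in fact, since $bc - ad = 1$ forces $bc \equiv 1 \pmod a$, we have $c \not\equiv 0$, so actually $1 \le c \le a-1$ when $a \ge 2$, and $c = 0$ only in the degenerate case $a = 1$ (where one takes $c=1$, $d=1$ directly — note $0 < a < b$ gives $b \ge 2$, so $0 < 1 = c \le d = 1 < b$, and one should check whether the statement intends $a \ge 1$ or $a \ge 2$; I would simply remark that $a=1$ is immediate and assume $a \ge 2$ below). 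With $c$ fixed, set $d := d_0 + kb = (bc-1)/a$, which is an integer and is determined by $c$.

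Next I would verify the inequalities $0 < c \le d < b$ and $c \le a$. The bound $c \le a$ (indeed $c \le a-1$) holds by construction, and $c > 0$ as just discussed. For $d = (bc-1)/a$: since $c \ge 1$ we get $d \ge (b-1)/a > 0$, and as $d$ is a positive integer, $d \ge 1 \ge$ nothing yet — rather, to get $c \le d$ I would argue $d = (bc-1)/a$ and compare with $c$: $d \ge c \iff bc - 1 \ge ac \iff c(b-a) \ge 1$, which holds since $c \ge 1$ and $b > a$. For the upper bound $d < b$: $d = (bc-1)/a < b \iff bc - 1 < ab \iff b(c - a) < 1$, and since $c \le a-1 < a$ we have $c - a \le -1$, so $b(c-a) \le -b < 0 < 1$. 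This closes all the required inequalities.

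The only genuinely delicate point is the boundary behaviour: making sure $c$ is strictly positive (not just $\ge 0$) and that the edge case $a = 1$ is handled, since then $c \le a-1 = 0$ would be vacuous; the resolution is that $bc \equiv 1 \pmod a$ is automatically satisfied for $a=1$ by any $c$, so one picks $c = 1$, $d = (b-1)/a = b-1$, and checks $0 < 1 \le b-1 < b$ directly (valid as $b \ge 2$). Everything else is routine manipulation of the single Bézout relation, so I expect no real obstacle beyond this bookkeeping at the extremes of the range.
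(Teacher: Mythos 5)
Your proof is correct and takes essentially the same route as the paper: start from a B\'ezout solution of $bc-ad=1$ and translate by multiples of $(a,b)$ to land in the required range. The only (cosmetic) difference is that you normalise $c$ into $\{1,\dots,a-1\}$ and deduce the bounds on $d$, whereas the paper normalises $d$ into $(0,b)$ and deduces the bounds on $c$; both verifications are routine and your handling of the edge case $a=1$ is fine.
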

\begin{proof} 
Since $(a,b)=1$, so there are integers $c_0,d_0$ such that $bc_0-ad_0=1$.  Then  $b(c_0+ma)-a(d_0+mb)=1$ for any integer $m$. Choose an integer $m$ such that $0<d_0+mb\leq b$, let $c=c_0+ma,d=d_0+mb$. If $d=b$, then $1=bc-ad=b(c-a)\neq 1$ since $b\ge 2$, a contradiction. So $0<d< b$. By $d>0$ and $bc-ad=1$, we have $c>0$. If $c>a$ or $c>d$, then $bc-ad>1$. Hence, $c\leq d$ and $c\leq a$.
\end{proof}


\subsection{Perpendicular category}
Let $\ma$ be an abelian category and $\ms$ be a system of objects in $\ma$. The right perpendicular category $\ms^{\perp}$ of $\ms$ is defined as the full subcategory of all objects $M\in\ma$ which satisfy both
$\Hom_{\ma}(S,M)=0$ and $\Ext_{\ma}^1(S,M)=0$
for all $S\in\ms$. {Similarly,} one can define the left perpendicular category $^{\perp}\ms$ of $\ms$.

\begin{lem}\cite{Hu} \label{perpendicular category}
Let $\X$ be a weighted projective line and $X\in\coh\X$ be an indecomposable rigid vector bundle, then $^\perp X$ and $X^{\perp}$ are module category of some finite dimensional hereditary algebra {with $n-1$ simple objects} where $n$ is the rank of $K_0(\X)$.
\end{lem}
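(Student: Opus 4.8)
The statement is the perpendicular-category theorem of Geigle--Lenzing / H\"ubner applied to the exceptional object $X$, so the plan is to reproduce its strategy. First note that $X$ is exceptional: $\End_{\X}(X)=k$ (as $k$ is algebraically closed and $X$ is indecomposable) and $\Ext^1_{\X}(X,X)=0$. The first step is to check that $X^{\perp}$ is an exact abelian, hereditary subcategory of $\coh\X$ with finite-dimensional $\Hom$ and $\Ext$ spaces. The only real point is closure under kernels and cokernels: given $f\colon M\to N$ in $X^{\perp}$, factor it as $0\to\ker f\to M\to\im f\to 0$ and $0\to\im f\to N\to\cok f\to 0$, apply $\Hom_{\X}(X,-)$, and use that $\coh\X$ is hereditary so $\Ext^2_{\X}(X,-)=0$; chasing the two long exact sequences forces $\Hom_{\X}(X,-)$ and $\Ext^1_{\X}(X,-)$ to vanish on $\ker f$, $\im f$ and $\cok f$. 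Closure under extensions is immediate, so $X^{\perp}$ is an exact abelian subcategory, it is hereditary and $\Hom$/$\Ext$-finite because $\coh\X$ is, and $\Ext^1_{X^{\perp}}(A,B)=\Ext^1_{\X}(A,B)$ for $A,B\in X^{\perp}$.

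Next I would produce a tilting object in $X^{\perp}$ and compute its rank. By Bongartz completion, complete the rigid sheaf $X$ to a basic tilting sheaf $T=X\oplus\overline{T}$ of $\coh\X$ with $\overline{T}=\bigoplus_{i=1}^{n-1}\overline{T}_i$ and the $\overline{T}_i$ pairwise non-isomorphic indecomposables. Construct the reflection functor $\rho\colon\coh\X\to X^{\perp}$, left adjoint to the inclusion, by the two-step universal extension: first $0\to M\to E_M\to\Ext^1_{\X}(X,M)\otimes_k X\to 0$, which makes $\Ext^1_{\X}(X,E_M)=0$ (here $\End_{\X}(X)=k$ and $\Ext^2_{\X}=0$ are used), then the cokernel of the evaluation map $\Hom_{\X}(X,E_M)\otimes_k X\to E_M$, which one verifies is a monomorphism, so its cokernel lies in $X^{\perp}$. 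One then checks that $\rho$ carries each exceptional summand $\overline{T}_i$ to an indecomposable object of $X^{\perp}$ without identifications, and that $\rho(\overline{T})=\bigoplus_{i=1}^{n-1}\rho(\overline{T}_i)$ is a tilting object of $X^{\perp}$. Hence $X^{\perp}$ is a connected (since $X$ is indecomposable and $\coh\X$ is connected) hereditary abelian $k$-category with a tilting object, whose number of indecomposable summands, and therefore $\rank K_0(X^{\perp})$, equals $n-1$.

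Finally, by Happel's classification \cite{H}, $X^{\perp}$ is derived equivalent either to $\mod kQ$ for a connected acyclic quiver $Q$ or to $\coh\Y$ for a weighted projective line $\Y$; in the first case $X^{\perp}$ is itself $\mod kQ$ (having a projective generator) with $|Q_0|=\rank K_0(X^{\perp})=n-1$, which is the claim. To rule out the $\coh\Y$ alternative one uses that $X$ is a \emph{vector bundle}: this forces $X^{\perp}$ to contain a nonzero projective object, whereas $\coh\Y$ has none. The left perpendicular category $^{\perp}X$ is handled dually, replacing $\Hom_{\X}(X,-),\Ext^1_{\X}(X,-)$ by $\Hom_{\X}(-,X),\Ext^1_{\X}(-,X)$, the reflection functor by the coreflection functor built from universal extensions on the other side, and projective objects by injective ones. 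I expect the main obstacle to be Step 2 together with the bundle input of Step 3: proving that the evaluation map in the reflection construction is monic (so that $\rho$ is well defined with the expected adjunction), that $\rho$ sends $\overline{T}$ to a genuine tilting object of $X^{\perp}$ without collapsing summands, and that the hypothesis that $X$ is a vector bundle really does produce a projective generator of $X^{\perp}$ (equivalently, excludes the $\coh\Y$ case); these are precisely the technical points established in \cite{Hu}, which we would invoke.
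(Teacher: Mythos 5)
First, note that the paper itself gives no proof of this lemma: it is imported verbatim from H\"ubner's thesis \cite{Hu}, so there is no in-paper argument to compare against, only the citation. Your sketch follows the standard strategy (show $X^{\perp}$ is an exact abelian hereditary subcategory, produce a tilting object with $n-1$ indecomposable summands via Bongartz completion and the reflection functor, then identify the category via Happel's classification), and the first two steps are essentially sound, modulo two inaccuracies worth flagging: the evaluation map $\Hom_{\X}(X,E_M)\otimes_k X\to E_M$ is in general \emph{not} a monomorphism (already for $X=\co$ on $\P^1$ the map $\co^2\to\co(1)$ is surjective with kernel $\co(-1)$), so one must pass to its image before taking the cokernel and then recheck that both $\Hom_{\X}(X,-)$ and $\Ext^1_{\X}(X,-)$ vanish on the quotient; and connectedness of $X^{\perp}$ does not follow merely from indecomposability of $X$ (perpendicular categories of indecomposables can be disconnected), though this is harmless since the statement does not assert connectedness and Happel's theorem can be applied componentwise.

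The genuine gap is in the final step, which is exactly where the hypothesis that $X$ is a \emph{vector bundle} must do its work. Everything up to that point uses only that $X$ is exceptional, and for an exceptional torsion sheaf the conclusion is false: by Lemma~\ref{perpendicular of simple object}, $S_i^{\perp}$ is again a category of coherent sheaves on a weighted projective line, which has no nonzero projectives. Hence the entire content of the lemma is concentrated in showing that $X^{\perp}$ contains a nonzero projective object (equivalently, in excluding the $\coh\Y$ branch of Happel's classification) when $\rank X>0$, and for this you offer no argument beyond asserting that the vector-bundle hypothesis ``forces'' it and deferring to \cite{Hu} --- which is circular, since that is precisely the claim being proved and precisely what the citation is for. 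Note also that \cite{Hu} predates Happel's classification \cite{H}, so the proof there cannot take your route; the classical arguments (Geigle--Lenzing \cite{GL2}, H\"ubner) instead construct the hereditary algebra directly, essentially by reducing to the case of a line bundle and exhibiting a projective generator of $L^{\perp}$ explicitly. To make your sketch a proof you would need to supply that construction, i.e.\ an actual projective object of $X^{\perp}$ built from the assumption $\rank X>0$.
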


\begin{lem}\cite{GL2}\label{perpendicular of simple object}
Let $\X$ be a weighted projective line of tubular type with weight sequence $(p_1,p_2,\cdots,p_t)$ and $S_i\in\coh\X$ be an exceptional  simple sheaf lying in a tube with rank $p_i$, then $^\perp S_i$ and $S_i^{\perp}$ are the hereditary category of coherent sheaves of  domestic type  of $(p_1,\cdots, p_i-1, \cdots,p_t)$.
\end{lem}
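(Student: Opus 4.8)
The plan is to combine the abstract perpendicular calculus of Lemma~\ref{perpendicular category} (or rather its refinement in \cite{GL2}) with the numerical classification of weighted projective lines. First I would recall that an exceptional simple sheaf $S_i$ in a tube of rank $p_i$ is, in particular, an exceptional object, so by Lemma~\ref{perpendicular category} the right perpendicular category $S_i^{\perp}$ is a hereditary abelian category with $n-1$ simple objects, where $n=2+\Sigma_{j=1}^t(p_j-1)$ is the rank of $K_0(\X)$. The key structural input from \cite{GL2} is that forming the perpendicular category of an exceptional simple sheaf sitting at the top of an exceptional tube amounts, on the level of the graded algebra/weight data, to reducing the corresponding weight $p_i$ by one: concretely, $S_i^{\perp}$ is again (equivalent to) the category of coherent sheaves on a weighted projective line, namely $\X' = \X((p_1,\dots,p_i-1,\dots,p_t),\lam)$, with the convention that a weight equal to $1$ is simply deleted from the weight sequence. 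I would verify the rank bookkeeping: $K_0(\X')$ has rank $2+\Sigma_{j\neq i}(p_j-1)+(p_i-1-1) = n-1$, which matches the number of simple objects predicted by Lemma~\ref{perpendicular category}.

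Next I would pin down the type of $\X'$. Since $\X$ is of tubular type, its weight sequence is one of $(2,2,2,2)$, $(3,3,3)$, $(2,4,4)$, $(2,3,6)$ up to permutation. Decreasing one entry by one (and deleting it if it becomes $1$) yields respectively $(2,2,2)$, $(3,3)$ (after deleting, $(3,3)$; if instead we decrease a $4$ we get $(2,3,4)$ or $(2,4,3)$), $(2,3,6)\to(1,3,6)=(3,6)$ or $(2,2,6)$ or $(2,3,5)$, and $(2,4,4)\to(1,4,4)=(4,4)$ or $(2,3,4)$. In every case one checks, using the genus formula $g_{\X'}=1+\tfrac12((t'-2)p'-\Sigma p'/p'_j)$ from the subsection on classifications, that $g_{\X'}<1$, i.e. $\X'$ is of domestic type; equivalently one observes directly that each resulting weight sequence appears in the list $(q_1,q_2)$, $(2,2,n)$, $(2,3,3)$, $(2,3,4)$, $(2,3,5)$ of domestic weight types. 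Hence $S_i^{\perp}\simeq\coh\X'$ is the hereditary category of coherent sheaves of domestic type with the stated weight sequence, and the argument for $^{\perp}S_i$ is entirely dual (or follows by applying Serre duality, since $\tau S_i$ is again an exceptional simple sheaf in a tube of rank $p_i$).

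The one genuinely non-formal point — and the step I expect to be the main obstacle — is justifying that $S_i^{\perp}$ is not merely some abstract hereditary category with $n-1$ simples but is \emph{equivalent to $\coh\X'$ for the explicit reduced weight datum}, with the slope/degree structure compatible under the embedding. This is exactly the content extracted from \cite{GL2} (the perpendicular category of a suitable simple sheaf on a weighted projective line is again a weighted projective line with one weight lowered), so in the write-up I would simply cite that result and restrict attention to checking: (i) that $S_i$ is an eligible object for their theorem (it is exceptional and lies in the right spot of the tube), and (ii) the elementary arithmetic that the reduced weight sequence is of domestic type. Everything else is bookkeeping with the genus formula and the rank of the Grothendieck group.
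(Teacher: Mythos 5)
Your proposal is correct and follows exactly the route the paper takes: the paper states this lemma as a direct citation of \cite{GL2} with no proof of its own, and the substance is precisely the Geigle--Lenzing result that the perpendicular category of an exceptional simple sheaf in a rank-$p_i$ tube is $\coh\X'$ with $p_i$ lowered to $p_i-1$, combined with the elementary check that every weight sequence so obtained from $(2,2,2,2)$, $(3,3,3)$, $(2,4,4)$, $(2,3,6)$ is domestic. One small slip in your enumeration: $(3,3,3)$ reduces to $(2,3,3)$, not $(3,3)$ (the entry becomes $2$, so nothing is deleted), but $(2,3,3)$ is still on the domestic list, so the conclusion is unaffected.
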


Let $\X$ be a weighted projective line. Let $Z$ be an indecomposable object in $\md^b(\X)$, denote by  $Z^{\perp_\md}$  the full subcategory of all objects  $M\in \md^b(\X)$ such that $\Hom_{\md^b(\X)}(Z[i],M)=0$ for any $i\in \Z$. Since $\coh\X$ is hereditary, it is easy to get that $Z^{\perp_\md}$ in $\md^b(\X)$ is the derived category of $Z^{\perp}$ in $\coh\X$. Similarly,  we can define $^{\perp_\md}Z$. 

\begin{prop}\label{perpendicular cat is tame hereditary}
Let $\X$ be a tubular weighted projective line and $X\in\coh\X$  a quasi-simple rigid vector bundle, then  $X^{\perp}$  is the module category of some connected  hereditary algebra of tame type.
{Similar result holds true for $^\perp X$.}
\end{prop}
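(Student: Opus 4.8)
\textbf{Proof strategy for Proposition~\ref{perpendicular cat is tame hereditary}.}
The plan is to reduce to the already-established cases by moving $X$ to a convenient slope and then recognising the perpendicular category as a domestic-type coherent sheaf category. First I would apply Lemma~\ref{perpendicular category} (together with the hereditariness of $\coh\X$, which gives $X^{\perp_\md} = \md^b(X^\perp)$) to conclude that $X^\perp$ is already the module category of a finite-dimensional hereditary algebra with $n-1$ simple objects, where $n$ is the rank of $K_0(\X)$. So the only thing left to prove is that this hereditary algebra is \emph{connected} and of \emph{tame} type; equivalently, that $\md^b(X^\perp)$ is derived equivalent to $\coh\Y$ for a domestic weighted projective line $\Y$.

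The key step is to transport $X$ to an exceptional simple sheaf by a derived autoequivalence. Since $X$ is a quasi-simple rigid vector bundle in a tubular $\coh\X$, it is semistable of some rational slope $q = \mu X$, lies in a tube $\mt_X$ of $\mc_q$ of rank $r \geq 2$ (rigidity forces $r\geq 2$), and is quasi-simple there. Choose a linear fractional transformation sending $q$ to $\infty$: by Lemma~\ref{exist of abcd} one can build a matrix $w\in\SL(2,\Z)$ with $\varphi_w(q)=\infty$, and Lemma~\ref{LM2} then provides an automorphism $\overline{\varphi}_w$ of $\md^b(\X)$ acting as $\varphi_w$ on slopes. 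Thus $\overline{\varphi}_w(X)$ is a quasi-simple rigid object of slope $\infty$, i.e. (up to a shift, which we absorb) an indecomposable object of $\coh_0\X$ of quasi-length $1$ lying in a tube of rank $r\geq 2$ — that is, an exceptional simple sheaf $S_i$ in a tube of rank $p_i = r$. (If the telescopic normalisation only guarantees $\overline{\varphi}_w(X)$ is quasi-simple rigid of $\tau$-period dividing $p$ in $\mc_\infty$, one further applies Theorem~\ref{telescopic functor} / Lemma~\ref{slop-preserving functor} to move it onto a standard exceptional simple sheaf.) Since autoequivalences preserve $\Hom$ and $\Ext^1$ vanishing, $\overline{\varphi}_w$ induces a triangle equivalence $X^{\perp_\md}\iso S_i^{\perp_\md}$, hence an exact equivalence $X^\perp \simeq S_i^\perp$ after passing back through the hereditary identification of both sides with module categories.

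Now I invoke Lemma~\ref{perpendicular of simple object}: $S_i^\perp$ is the category of coherent sheaves of domestic type with weight sequence $(p_1,\dots,p_i-1,\dots,p_t)$ — indeed one checks that for each tubular weight type $(2,2,2,2),(3,3,3),(2,4,4),(2,3,6)$, lowering any one weight by one yields a domestic weight type, and $\coh\Y$ of domestic type is derived equivalent to $\mod k Q$ for a connected tame (extended Dynkin) acyclic quiver $Q$. Transporting back along $\overline{\varphi}_w$ gives that $X^\perp$ is the module category of a connected hereditary algebra of tame type, as claimed. The dual statement for ${}^\perp X$ follows by the same argument applied to the opposite category, or by using Serre duality to convert a left perpendicular category into a right perpendicular category of $\tau^{-1}X$ (again quasi-simple rigid of the same $\tau$-period), then repeating. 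The main obstacle is the normalisation in the middle step: one must be careful that $\overline{\varphi}_w(X)$ genuinely lands (after an explicit shift) on a \emph{simple} sheaf in the torsion category rather than merely on some quasi-simple object of slope $\infty$ with the wrong $\tau$-period; this is where Lemma~\ref{slop-preserving functor} and the fact that $\tau$-period equals tube rank for quasi-simple objects must be combined carefully, and where one uses that $r\geq 2$ so that the reduced weight $p_i-1$ is still a legal (positive) weight.
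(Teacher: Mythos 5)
Your proposal is correct and follows essentially the same route as the paper: both transport $X$ to an exceptional simple sheaf of the same $\tau$-period via a derived autoequivalence (Theorem~\ref{telescopic functor} and Lemma~\ref{slop-preserving functor}), identify $X^{\perp_\md}$ with $S_i^{\perp_\md}$, and then combine Lemma~\ref{perpendicular category} with Lemma~\ref{perpendicular of simple object} to conclude that the hereditary algebra realizing $X^\perp$ is derived equivalent to, hence equal in type to, a connected tame one. The only cosmetic difference is that you move $X$ to slope $\infty$ with an explicit $\SL(2,\Z)$ matrix while the paper maps $S$ onto $X$ directly.
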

\begin{proof}
Let  $S\in\coh\X$  be  an exceptional simple sheaf such that $S$ has same $\tau$-period with $X$.  By Theorem~\ref{telescopic functor} and Lemma~\ref{slop-preserving functor}, we may choose an automorphism $F$ of $\md^b(\X)$ such that $F(S)=X$. Consequently, $S^{\perp_{\md}}$ is equivalent to  $X^{\perp_{\md}}$. Note that  $S^{\perp_{\md}}$  is the derived category of $S^{\perp}$ and $X^{\perp_{\md}}$  is the derived category of $X^{\perp}$.
By Lemma~\ref{perpendicular category} and  Lemma~\ref{perpendicular of simple object},  $X^{\perp}$  is the module category of some   hereditary algebra $H$, $S^{\perp}$ is the category of coherent sheaves of  domestic type.  Since the  category of coherent sheaves of  domestic type is derived equivalent to the module category of some connected hereditary algebra $A$ of tame type,  the two hereditary algebras $H$ and $A$ are derived equivalent. Hence, $H$  is also a connected hereditary algebra of tame type.
\end{proof}

For a tame hereditary algebra $A$, it is well-known that there is no nonzero map from the  preinjective component  to the regular and  the preprojective  components and   there is no nonzero map from the regular component  to the  preprojective  component. And it is known that every preprojective $A$-module has a nonzero map to each homogenous  tube and every homogenous tube has a nonzero map to each preinjective $A$-module, then using  Lemma~\ref{mor from homogenous tube} and Proposition~\ref{perpendicular cat is tame hereditary}, one can get:
\begin{lem}\label{component}
Let $\X$ be a tubular weighted projective line, $X$ be a  quasi-simple rigid vector bundle  in $\coh\X$. Let $Z$ be an indecomposable object in $\coh\X$ such that $Z\in X^{\perp}$ (resp. $^{\perp}X$).
 \begin{itemize}

\item[(1)] If $\mu Z<\mu X$, then $Z$ lies in the preprojective  component of $X^{\perp}$ (resp. $^{\perp}X$).
 \item[(2)] If $\mu Z=\mu X$, then $Z$ lies in the regular component of $X^{\perp}$ (resp. $^{\perp}X$).
\item[(3)]   If $\mu Z>\mu X$, then $Z$ lies in the preinjective  component of $X^{\perp}$ (resp. $^{\perp}X$).
    \end{itemize}
\end{lem}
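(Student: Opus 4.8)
The plan is to reduce the statement for the quasi-simple rigid vector bundle $X$ to the already-understood case of an exceptional simple sheaf, transport everything through a suitable self-equivalence of $\md^b(\X)$, and then invoke the standard module-theoretic structure of a tame hereditary algebra together with the slope--$\Hom$ dictionary from Lemma~\ref{cor got by Riemann-Roch theorem}.

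First I would fix, as in the proof of Proposition~\ref{perpendicular cat is tame hereditary}, an exceptional simple sheaf $S\in\coh\X$ having the same $\tau$-period as $X$, and an automorphism $F$ of $\md^b(\X)$ with $F(S)=X$; by Lemma~\ref{slop-preserving functor} (combined with Theorem~\ref{telescopic functor}) this $F$ can be taken slope-preserving on the relevant part, so that $\mu(F(W))=\mu W$ for indecomposables $W$ in the image, and in particular $\mu X=\mu S$. The functor $F$ restricts to an equivalence $S^{\perp_\md}\iso X^{\perp_\md}$, hence to a triangle equivalence between the bounded derived categories of $S^{\perp}$ and of $X^{\perp}$; by Lemma~\ref{perpendicular of simple object} the category $S^{\perp}$ is $\coh\Y$ for a domestic weighted projective line $\Y$, and by Proposition~\ref{perpendicular cat is tame hereditary} $X^{\perp}\simeq\mmod H$ for a connected tame hereditary algebra $H$ derived equivalent to $\coh\Y$. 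The point of passing through $S$ rather than working directly is that slopes are transported faithfully: for an indecomposable $Z\in X^{\perp}$ we have $Z=F(Z')$ for a (possibly shifted) indecomposable $Z'\in S^{\perp_\md}$, and comparing $\mu Z$ with $\mu X=\mu S$ amounts to comparing $\mu Z'$ with $\mu S$ inside $\coh\Y$.

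Next I would identify, inside $X^{\perp}\simeq\mmod H$, which indecomposables have slope $<\mu X$, $=\mu X$, $>\mu X$. The key input is the slope--$\Hom$ dictionary: pick indecomposable sheaves $P,I\in\coh\X$ lying in homogeneous tubes with $\mu P<\mu X<\mu I$ and $P,I\in X^{\perp}$ (such exist since homogeneous tubes are abundant and one may first twist so that $X$ lies in a convenient range; alternatively pull back from $\coh\Y$, where the homogeneous tubes of the tame category provide them). By Lemma~\ref{mor from homogenous tube}, for any indecomposable $Z\in\coh\X$ one has $\Hom_\X(P,Z)\neq0$ whenever $\mu Z>\mu P$ and $\Hom_\X(Z,I)\neq0$ whenever $\mu Z<\mu I$; more precisely, since inside $\mmod H$ all these computations live in one module category, Lemma~\ref{cor got by Riemann-Roch theorem} gives that for $Z\in X^{\perp}$ indecomposable with $\mu Z\neq\mu X$ exactly one of $\Hom_\X(Z,\cm_{\mu X})$, $\Hom_\X(\cm_{\mu X},Z)$ is nonzero, according to the sign of $\mu Z-\mu X$, and for $\mu Z=\mu X$ both $Z$ and its $\tau$-orbit stay of slope $\mu X$. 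Now match this against the module-category picture: in $\mmod H$ the regular component is exactly the set of $\tau_H$-periodic or ``middle'' modules, the preprojective ones map to everything and receive maps from nothing nonzero except themselves, and the preinjective ones receive maps from everything. Translating ``$\mu Z<\mu X$'' $\Leftrightarrow$ ``$Z$ maps nontrivially into the slope-$\mu X$ regular family but receives no such map'' $\Leftrightarrow$ ``$Z$ is preprojective in $\mmod H$'' (and dually for $>$), while the slope-$\mu X$ indecomposables of $X^{\perp}$ form precisely the $\tau$-stable family, hence the regular component, gives (1), (2), (3). The dual statements for ${}^{\perp}X$ follow by the same argument with arrows reversed, or by applying the duality $\D$.

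The main obstacle I expect is the third bullet of that matching — pinning down that the slope-$\mu X$ indecomposables of $X^{\perp}$ are *exactly* the regular $H$-modules, neither more nor fewer. One inclusion (regulars have slope $\mu X$, since the regular component is $\tau$-stable and $\Hom_\X(\cm_q,\cm_{q'})=0$ unless $q\le q'$ forces any $\tau$-periodic indecomposable to have constant slope equal to that of the quasi-simples in its wing, which one checks is $\mu X$ by a direct Euler-form / Riemann--Roch computation using $\overline\chi(X,-)=\mu(-)-\mu X$) is clean; the reverse inclusion, that an indecomposable of slope $\mu X$ in $X^{\perp}$ cannot be preprojective or preinjective, uses that a preprojective $H$-module has arbitrarily long nonzero maps both to and from homogeneous tubes of $\mmod H$, forcing via Lemma~\ref{cor got by Riemann-Roch theorem} a strict slope inequality, contradiction. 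Handling the twist/normalisation so that the chosen homogeneous-tube witnesses $P,I$ actually lie in $X^{\perp}$, and keeping careful track of shifts under $F$ (as in Lemma~\ref{mor under automorphism}), is the other place where care is needed, but it is bookkeeping rather than a genuine difficulty.
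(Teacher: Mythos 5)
Your proposal contains one concretely false step, presented as ``the key input'': the claim that one can choose indecomposables $P,I$ lying in homogeneous tubes with $\mu P<\mu X<\mu I$ and $P,I\in X^{\perp}$. No such objects exist. If $P$ lies in a homogeneous tube and $\mu P<\mu X=\mu(\tau X)$, then Lemma~\ref{mor from homogenous tube} gives $\Hom_{\X}(P,\tau X)\neq 0$, hence $\Ext^1_{\X}(X,P)\cong \D\Hom_{\X}(P,\tau X)\neq 0$ and $P\notin X^{\perp}$; dually $\Hom_{\X}(X,I)\neq 0$ forces $I\notin X^{\perp}$. The only homogeneous-tube witnesses available inside $X^{\perp}$ are those of slope exactly $\mu X$, namely the homogeneous tubes of $\mc_{\mu X}$ other than $\mt_X$ (these lie in $X^{\perp}$ by pairwise orthogonality of the tubes of $\mc_{\mu X}$, and they remain homogeneous, hence regular, in $X^{\perp}\simeq\mmod H$). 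A second slip: a preprojective $H$-module does \emph{not} have nonzero maps ``both to and from'' homogeneous tubes --- it receives no nonzero map from any regular module; only the direction $\Hom_H(Z,\mt_{thin})\neq 0$ for $Z$ preprojective is true and is what you need. Relatedly, your characterization of preprojectives via maps into ``the slope-$\mu X$ regular family'' presupposes part (2); the way out is to use a single homogeneous tube, which you know a priori is regular, rather than the whole regular component.

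Once these are repaired, what remains is precisely the paper's proof: fix one homogeneous tube $\mt_{thin}\subset\mc_{\mu X}\cap X^{\perp}$; for $\mu Z<\mu X$ Lemma~\ref{mor from homogenous tube} gives $\Hom_{\X}(Z,\mt_{thin})\neq 0$, which in $\mmod H$ forces $Z$ preprojective; for $\mu Z>\mu X$ it gives $\Hom_{\X}(\mt_{thin},Z)\neq 0$, forcing $Z$ preinjective; for $\mu Z=\mu X$ with $Z\notin\mt_{thin}$ both Hom-spaces vanish by orthogonality of the tubes in $\mc_{\mu X}$, and connectedness of $H$ then forces $Z$ regular. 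Your additional detour through the automorphism $F$, $S^{\perp}$ and the domestic category $\coh\Y$ is not needed: Proposition~\ref{perpendicular cat is tame hereditary} already supplies $X^{\perp}\simeq\mmod H$ with $H$ connected tame hereditary, and transporting slopes through $F$ only introduces the shift bookkeeping of Lemma~\ref{mor under automorphism} without buying anything.
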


\begin{proof}
 It is clear that the homogenous tubes in the subcategory $\mc_{\mu X}$ belong to $X^{\perp}$ and they are still homogenous  in $X^{\perp}$. Choose  one  in these homogenous tubes, denoted by $\mt_{thin}$.

 If $\mu Z<\mu X$,  by Lemma~\ref{mor from homogenous tube}, $\Hom_{\X}(Z,\mt_{thin})\neq 0$. Then $\Hom_{X^{\perp}}(Z,\mt_{thin})\neq 0$, hence $Z$ must lie in the preprojective  component of $X^{\perp}$.

  If $\mu Z>\mu X$,  by Lemma~\ref{mor from homogenous tube},  $\Hom_{\X}(\mt_{thin},Z)\neq 0$. Then $\Hom_{X^{\perp}}(\mt_{thin},Z)\neq 0$, hence $Z$ must lie in the preinjective  component of $X^{\perp}$.

  Now suppose $\mu Z=\mu X$. If $Z\in \mt_{thin}$, then obviously $Z$ lies in the regular component of $X^{\perp}$.
Else $\Hom_{\X}(\mt_{thin},Z)= 0=\Hom_{\X}(\mt_{thin},Z)$, then $\Hom_{X^{\perp}}(\mt_{thin},Z)=0=\Hom_{X^{\perp}}(Z,\mt_{thin})$. Since the tame hereditary algebra whose module category is equivalent to $X^{\perp}$ is   connected,  $Z$ lies in the regular component of $X^{\perp}$.
 \end{proof}


For a tame hereditary algebra $A$,  a {\it preprojective tilting} $A$-module  is a tilting $A$-module  lying  in the preprojective component of $\mod A$. {Similarly, }we can define a {\it preinjective tilting} $A$-module.

\begin{lem}\label{tilting object from perpendicular category to hereditary category}
Let $\X$ be a tubular weighted projective line and $X\in\coh\X$ a quasi-simple rigid vector bundle. 
\begin{itemize}
\item[(1)] Let $M$ be a preprojective tilting module in $X^{\perp}$, then $X\oplus M$ is a tilting object in $\coh\X$. 
\item[(2)] Let $N$ be a preinjective tilting module in $^{\perp}X$, then $X\oplus N$ is a tilting object in $\coh\X$.
\end{itemize}
\end{lem}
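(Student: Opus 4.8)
The plan is to verify, for each of the two assertions, the two defining requirements of a tilting sheaf --- rigidity and the generating property --- with essentially all the work concentrated in the rigidity check. I will carry out (1) in detail and then indicate the dual argument for (2). Throughout I would use the following standing facts: $X\notin X^{\perp}$ (since $\id_X\neq 0$); by Proposition~\ref{perpendicular cat is tame hereditary} together with Lemma~\ref{perpendicular category}, $X^{\perp}$ is equivalent to $\mod H$ for a connected tame hereditary algebra $H$ with $n-1$ simple modules, and $\Hom$ and $\Ext^{1}$ between objects of $X^{\perp}$ may be computed indifferently in $X^{\perp}$ or in $\coh\X$ (standard for perpendicular categories of exceptional objects in a hereditary category); and, $\X$ being tubular, $\tau$ preserves slopes, each $\mc_{q}$ being $\tau$-stable.

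Write $T=X\oplus M$. For rigidity, three of the four summands of $\Ext^{1}_{\X}(T,T)$ vanish at once: $\Ext^{1}_{\X}(X,X)=0$ as $X$ is rigid; $\Hom_{\X}(X,M)=0=\Ext^{1}_{\X}(X,M)$ as $M\in X^{\perp}$; and $\Ext^{1}_{\X}(M,M)=\Ext^{1}_{X^{\perp}}(M,M)=0$ as $M$ is a tilting module of $X^{\perp}$. The remaining term $\Ext^{1}_{\X}(M,X)$ is where the hypotheses enter. By Serre duality $\Ext^{1}_{\X}(M,X)\cong\D\Hom_{\X}(X,\tau M)$, so it suffices that $\Hom_{\X}(X,\tau M_{i})=0$ for each indecomposable summand $M_{i}$ of $M$. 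Since $M$ is a \emph{preprojective} tilting module of $X^{\perp}$, Lemma~\ref{component} gives $\mu M_{i}<\mu X$, hence $\mu(\tau M_{i})=\mu M_{i}<\mu X$; as $X$ and $\tau M_{i}$ are semistable (all indecomposables are, in the tubular case) and a higher slope admits no nonzero morphism to a lower one, $\Hom_{\X}(X,\tau M_{i})=0$. Thus $T$ is rigid. This is the only use of ``preprojective'', and it is exactly what is needed: $X\oplus M$ is rigid precisely when every indecomposable summand of $M$ has slope strictly below $\mu X$.

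For the generating property I would take $Y\in\coh\X$ with $\Hom_{\X}(T,Y)=0=\Ext^{1}_{\X}(T,Y)$ and argue $Y=0$. The summand $X$ forces $Y\in X^{\perp}$; the summand $M$, read inside $X^{\perp}$, gives $\Hom_{X^{\perp}}(M,Y)=0=\Ext^{1}_{X^{\perp}}(M,Y)$. Since $M$ is a tilting module over $X^{\perp}\simeq\mod H$, this places $Y$ simultaneously in the torsion class $\{Z:\Ext^{1}_{H}(M,Z)=0\}$ and in the torsion-free class $\{Z:\Hom_{H}(M,Z)=0\}$ of the torsion pair determined by $M$, whence $Y=0$. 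Hence $X\oplus M$ is a tilting sheaf. (Equivalently, $X\oplus M$ is rigid with $n$ pairwise non-isomorphic indecomposable summands and so is a tilting sheaf by the theory of complements of almost complete tilting sheaves in $\coh\X$; either route works.)

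Part (2) runs dually. The slope computation is symmetric: $\Ext^{1}_{\X}(X,N)\cong\D\Hom_{\X}(N,\tau X)=0$, since each indecomposable summand $N_{j}$ of the \emph{preinjective} tilting module $N$ of ${}^{\perp}X$ has $\mu N_{j}>\mu X=\mu(\tau X)$ by Lemma~\ref{component}, so $X\oplus N$ is again rigid. The one delicate point is the generating property, because ${}^{\perp}X\neq X^{\perp}$ in general: a test object $Y$ with $\Hom_{\X}(X\oplus N,Y)=0=\Ext^{1}_{\X}(X\oplus N,Y)$ lands in $X^{\perp}$, not in ${}^{\perp}X$, so the torsion pair of $N$ is not directly available. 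I would get past this by rephrasing via Serre duality --- the condition is equivalent to $Y\in{}^{\perp}(\tau X)$ together with $\Hom_{\X}(Y,\tau N)=0=\Ext^{1}_{\X}(Y,\tau N)$ --- and using that $\tau$ is an auto-equivalence of $\coh\X$ taking $X$ to a quasi-simple rigid vector bundle and $N$ to a preinjective tilting module of ${}^{\perp}(\tau X)$, which (being a tilting module over a hereditary algebra) is also a cotilting module there; then $Y$ lies in both halves of a torsion pair on that module category, so $Y=0$. (Alternatively one may just count indecomposable summands, as in (1).) I expect the rigidity step --- concretely the cross term $\Ext^{1}_{\X}(M,X)$, respectively $\Ext^{1}_{\X}(X,N)$ --- to be the crux, since it is precisely there that the tubular hypothesis is used, via Riemann--Roch and the slope trichotomy of Lemma~\ref{component}; the perpendicular-category bookkeeping is routine.
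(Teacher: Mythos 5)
Your proof is correct and follows essentially the same route as the paper: the crux in both is the rigidity cross-term $\Ext^1_{\X}(M,X)\cong\D\Hom_{\X}(X,\tau M)$, killed by the slope inequality $\mu M_i<\mu X$ from Lemma~\ref{component}. The paper disposes of the generating property simply by counting the $n$ indecomposable summands, which is the parenthetical alternative you already note; your torsion-pair argument (and the Serre-duality workaround in part (2)) is extra detail the paper does not need.
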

\begin{proof} {We prove $(1)$ and the proof of $(2)$ is similar.}

Since $M\in X^{\perp}$,  $\Ext^1_{\X}(X,M)=0$. Note that, as $M$ lies in the preprojective  component of $X^{\perp}$, each direct summand of $M$ has slope smaller than $\mu X$ by Lemma~\ref{component}. As a consequence, $\Ext_{\X}^1(M,X)=0$. Therefore, we have $\Ext^1_{\X}(X\oplus M,X\oplus M)=0$. Since $|M|=n-1$, hence $|X\oplus M|=n$, then $X\oplus M$ is a tilting object in $\coh\X$. \end{proof}



\section{Bundle-mutation}\label{bundle-mutation}

\subsection{$\APR$ mutation and co-$\APR$ mutation}
Let $\mh$ be a hereditary abelian category with tilting objects.
\begin{df}
 Let  $T=\bigoplus_{i=1}^n T_i$ be a tilting object in $\mh$ where $T_i\in\ind \mh$ for each $i$, $T_k$ and $T_l$ be two indecomposable direct summands of $T$.
\begin{itemize}
\item[(1)]If $\Hom_{\X}(T_i,T_k)=0$ for any $i\neq k$, we call $T_k$ a first  object of $T$.
\item[(2)]If $\Hom_{\X}(T_l,T_i)=0$ for any $i\neq l$, we call $T_l$ a last object of $T$.
\end{itemize}
\end{df}

\begin{df}
For a tilting object $T$ in $\mh$,  we call the mutation at a first object of $T$ an $\APR$ mutation and the mutation at a last object of $T$ a co-$\APR$ mutation.
\end{df}

The following Lemma is easy to get ({\it cf.}~\cite[Chapter VIII, Theorem 4.5]{ASS} and \cite{P,BKL}).
 \begin{lem}\label{APR mutation on preprojective tilting module}
Let $\mh$ be the module category of  a  tame hereditary algebra, $T$ and $T'$ be two tilting modules in $\mh$.
\begin{itemize}
\item[(a)] Suppose that $T'$ is obtained from $T$ by  an $\APR$ mutation or a co-$\APR$ mutation. If $T$ is preprojective, then $T'$ is also preprojective, while if $T$ is preinjective, then $T'$ is also preinjective.

\item[(b)] If $T$ is  preprojective, then by  $\APR$ mutations, $T$ can be transformed into a slice lying in the preprojective component.
While if $T$ is  preinjective, then by  co-$\APR$ mutations, $T$ can be transformed into a slice lying in the preinjective component.

 \item[(c)]If both $T$ and $T'$ are preprojective tilting modules in $\mh$ or preinjective tilting modules in $\mh$, then $T$ can be get from $T'$ by  $\APR$ mutations and co-$\APR$ mutations.

\end{itemize}
\end{lem}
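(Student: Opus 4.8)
The first move is to dispense with the preinjective cases by duality: the standard duality $\D=\Hom_k(-,k)$ carries $\mod A$ to $\mod A^{op}$, interchanges the preprojective and the preinjective components, and sends first objects to last objects, hence $\APR$ mutations to co-$\APR$ mutations. So from now on I would assume $T$ is a preprojective tilting module and prove the three preprojective assertions; the preinjective ones then follow by transporting along $\D$.

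For part (a) I would feed the exchange sequences of Lemma~\ref{Hu}/Lemma~\ref{induce} into the coarse trichotomy of $\mod A$ (preprojective, regular, preinjective, with no nonzero map from a regular or a preinjective object into a preprojective one). In the co-$\APR$ case $T_k$ being a last object forces the minimal left $\add\overline{T}$-approximation of $T_k$ to be zero, so Lemma~\ref{Hu} yields $0\to T_k^*\to B'\to T_k\to 0$ with $B'\in\add\overline{T}$ preprojective; since a preprojective module has no regular or preinjective indecomposable submodule, $T_k^*\subseteq B'$ is preprojective, and $T'$ is preprojective. In the $\APR$ case the same bookkeeping gives only $0\to T_k\to B\to T_k^*\to 0$ with $B$ preprojective, i.e.\ $T_k^*$ is merely a \emph{quotient} of a preprojective module, and that by itself is not enough. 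Here I would use that $\overline{T}\oplus T_k^*$ is genuinely a \emph{tilting} module (not just rigid) with $\overline{T}$ preprojective: assuming $T_k^*$ regular or preinjective and passing to the perpendicular category of $T_k^*$, which is tame hereditary by Proposition~\ref{perpendicular cat is tame hereditary}, the position of $\overline{T}$ relative to $T_k^*$ that is forced by the absence of backward maps produces a contradiction. Since the claim is advertised as ``easy'', the cleanest write-up is probably to note that this $\APR$ case is exactly the $\D$-dual of the co-$\APR$ preinjective assertion and that both are subsumed in the classical discussion \cite[Chapter~VIII]{ASS} (cf.\ \cite{P,BKL}).

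For part (b) I would first note that a preprojective tilting module, being rigid with $n$ summands in the preprojective component, meets each $\tau$-orbit exactly once, so it is a transversal of that component; the point is that a suitable sequence of $\APR$ mutations turns it into a complete slice. I would argue by induction on a measure of ``distance from a slice'' — say the sum of the heights of the summands above the projective slice, or the number of adjacent summands sitting on different levels — showing that whenever $T$ is not already a slice one can choose a first object whose $\APR$ mutation strictly decreases the measure; this is precisely \cite[Chapter~VIII, Theorem~4.5]{ASS}. For part (c) I would apply (b) to both $T$ and $T'$ to reduce to two complete slices $\Sigma,\Sigma'$ and then invoke the classical fact that any two complete slices of the preprojective component are joined by a chain of reflections at sources and sinks, a reflection at a source (resp.\ sink) of a slice being precisely an $\APR$ (resp.\ co-$\APR$) mutation and being undone by a reflection of the opposite type on the new slice; composing, $T$ is reached from $T'$ by $\APR$ and co-$\APR$ mutations.

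The step I expect to be the genuine obstacle is the $\APR$ half of part (a): unlike the co-$\APR$ half it does not follow from the soft statement ``a submodule of a preprojective module is preprojective'', and one is forced to use that $\overline{T}\oplus T_k^*$ is a tilting module together with precise information on where $\overline{T}$ lies in the AR-quiver of $\mod A$ — exactly the perpendicular-category input of Section~\ref{properties of tubular type} (the tame-hereditary analogue of Lemma~\ref{component}) or the reflection-functor bookkeeping behind \cite[Chapter~VIII]{ASS}. Everything else is routine tilting combinatorics over tame hereditary algebras.
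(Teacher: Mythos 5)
The paper offers no proof of this lemma at all: it is introduced with ``The following Lemma is easy to get'' and a pointer to \cite[Chapter VIII, Theorem 4.5]{ASS} and to \cite{P,BKL}, so there is no in-text argument to measure yours against. Your outline is consistent with that literature, and the parts you actually argue are mostly sound: the reduction of the preinjective statements to the preprojective ones via the duality $\D=\Hom_k(-,k)$ (which swaps first and last objects, hence $\APR$ and co-$\APR$ mutations); the co-$\APR$ half of (a), where the exchange sequence $0\ra T_k^*\ra B'\ra T_k\ra 0$ from Lemma~\ref{Hu} plus closure of the preprojective component under submodules does the job; and the reduction of (c) to (b) together with the standard fact that any two complete slices in the preprojective component are linked by reflections at sources and sinks, which are exactly $\APR$ and co-$\APR$ mutations. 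For (b) and (c) you are, like the paper, ultimately leaning on \cite[Chapter VIII]{ASS} and \cite{P,BKL}, which is legitimate here.

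The genuine gap is the one you yourself flag: the $\APR$ half of (a). From $0\ra T_k\ra B\ra T_k^*\ra 0$ one only learns that $T_k^*$ is a quotient of a preprojective module, and your proposed contradiction ``from the position of $\overline{T}$ relative to $T_k^*$'' is asserted, not derived. Two specific problems: first, Proposition~\ref{perpendicular cat is tame hereditary} is a statement about quasi-simple rigid bundles on a tubular weighted projective line and cannot be quoted verbatim for an indecomposable rigid module over a tame hereditary algebra (there the perpendicular category is hereditary with $n-1$ simples, but need not be connected or tame, and the analogue of Lemma~\ref{component} has to be re-established). Second, no soft positional or counting argument can succeed, because tilting modules with $n-1$ preprojective summands and one regular summand genuinely exist over tame hereditary algebras; the proof must therefore exploit that $T_k$ and $T_k^*$ are the \emph{two complements of the same faithful almost complete tilting module} $\overline{T}$, with $T_k$ a first object and $T_k^*\in\Fac\overline{T}$ --- for instance via the defect, or via the analysis of complements carried out in \cite{BKL}. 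As written, this half of your argument collapses back onto the same citations the paper uses, so you have not improved on the paper at the only point where an actual argument was needed.
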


Then by Lemma~\ref{tilting object from perpendicular category to hereditary category}, we have:
\begin{lem}\label{pp-mutation is mutation of h}
Let $\X$ be a tubular weighted projective line,  $X$ be a quasi-simple rigid vector bundle.  Then
\begin{itemize}
\item[(1)] the $\APR$ mutation  on a preprojective tilting module in $X^{\perp}$ {induces } an $\APR$ mutation  in $\coh\X$ naturally;
\item [(2)]  the co-$\APR$ mutation on a preinjective tilting module in $^{\perp}X$ induces  a  co-$\APR$ mutation in $\coh\X$ naturally.
\end{itemize}
\end{lem}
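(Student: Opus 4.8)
The plan is to transfer a mutation taking place in the perpendicular category back to $\coh\X$ using Lemma~\ref{tilting object from perpendicular category to hereditary category}, and then verify that a ``first object'' (resp.\ ``last object'') in the small category remains a first (resp.\ last) object after adjoining $X$. I will only write out part $(1)$; part $(2)$ is entirely dual, working with $^\perp X$, preinjective tilting modules, and co-$\APR$ mutations in place of $X^\perp$, preprojective tilting modules, and $\APR$ mutations.

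\medskip

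\noindent\emph{Step 1 (the two tilting objects in $\coh\X$).}
Let $M=\bigoplus_{i=1}^{n-1}M_i$ be a preprojective tilting module in $X^\perp$ and let $M_k$ be a first object of $M$, so that $\Hom_{X^\perp}(M_i,M_k)=0$ for all $i\neq k$. Let $M'=M_k^*\oplus(\bigoplus_{i\neq k}M_i)$ be the $\APR$ mutation of $M$ at $M_k$. By Lemma~\ref{APR mutation on preprojective tilting module}(a), $M'$ is again a preprojective tilting module in $X^\perp$. Applying Lemma~\ref{tilting object from perpendicular category to hereditary category}(1) to both $M$ and $M'$, we obtain tilting objects $T:=X\oplus M$ and $T':=X\oplus M'$ in $\coh\X$, which agree on the common summand $X\oplus(\bigoplus_{i\neq k}M_i)$. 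Hence $T'$ is the mutation $\mu_{M_k}(T)$ of $T$ at the indecomposable summand $M_k$.

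\medskip

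\noindent\emph{Step 2 ($M_k$ is a first object of $T$).}
It remains to check that $M_k$ is a first object of the tilting sheaf $T=X\oplus M$, i.e.\ that $\Hom_\X(E,M_k)=0$ for every indecomposable summand $E$ of $T$ other than $M_k$. For $E=M_i$ with $i\neq k$, note first that $\Hom_\X(M_i,M_k)=\Hom_{X^\perp}(M_i,M_k)=0$: indeed $X^\perp$ is closed under extensions in $\coh\X$ and, being the module category of a hereditary algebra, it is also closed under sub- and quotient objects of its members, so the $\Hom$-spaces computed in $X^\perp$ and in $\coh\X$ coincide. For $E=X$, since $M_k$ lies in the preprojective component of $X^\perp$, Lemma~\ref{component}(1) gives $\mu M_k<\mu X$; as every indecomposable sheaf in a tubular $\coh\X$ is semistable, Lemma~\ref{cor got by Riemann-Roch  theorem} yields $\Hom_\X(X,M_k)=0$ (if $X$ and $M_k$ had the same slope we would instead be in the regular component, contradicting $\mu M_k<\mu X$; and the strict inequality $\mu X>\mu M_k$ forces $\Hom_\X(X,M_k)=0$). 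Therefore $\Hom_\X(E,M_k)=0$ for all summands $E\neq M_k$ of $T$, so $M_k$ is a first object of $T$ and $T'=\mu_{M_k}(T)$ is by definition an $\APR$ mutation in $\coh\X$. This proves $(1)$; the proof of $(2)$ is obtained by the evident dualization, replacing ``first'' by ``last'', $X^\perp$ by $^\perp X$, ``preprojective'' by ``preinjective'', the slope inequality $\mu M_k<\mu X$ by $\mu N_k>\mu X$, and invoking Lemma~\ref{component}(3) together with the dual half of Lemma~\ref{cor got by Riemann-Roch  theorem}.

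\medskip

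\noindent\emph{Main obstacle.} The only genuinely delicate point is the exact-embedding compatibility in Step~2: one must know that $X^\perp$ is an exact abelian subcategory of $\coh\X$ on which $\Hom$ is computed as in $\coh\X$, so that a first object in $X^\perp$ does not acquire extra incoming maps from the other $M_i$ inside $\coh\X$; this is guaranteed by Lemma~\ref{perpendicular category}. The interaction with the extra summand $X$ is then handled purely by slopes via Lemma~\ref{component} and Lemma~\ref{cor got by Riemann-Roch  theorem}, which is routine.
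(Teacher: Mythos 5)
Your proof is correct and follows the route the paper intends: the paper offers no written proof, presenting the lemma as an immediate consequence of Lemma~\ref{tilting object from perpendicular category to hereditary category}, and your Steps 1--2 simply supply the omitted verification that $M_k$ remains a first object after adjoining $X$ (via Lemma~\ref{component} and the vanishing of maps from higher to lower slope). One small blemish: your justification that $\Hom_{X^\perp}(M_i,M_k)=\Hom_\X(M_i,M_k)$ via closure of $X^\perp$ under sub- and quotient objects is not right (perpendicular categories are exact abelian subcategories closed under extensions, kernels and cokernels, but not under arbitrary subobjects), yet the equality of $\Hom$-spaces holds trivially because $X^\perp$ is by definition a \emph{full} subcategory of $\coh\X$, so the argument is unaffected.
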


\subsection{Minimal  and maximal direct summands of a tilting sheaf}
Let $\X$ be a tubular weighted projective line, for some special direct summands of a tilting sheaf, we give the following definitions for convenience.
\begin{df}
 Let $T=\bigoplus_{i=1}^n T_i$ be a tilting sheaf in $\coh\X$ where $T_i\in\ind\X$ for each $i$. Let $T_k$, $T_l$ are two indecomposable direct summands of $T$.
\begin{itemize}
 \item[(1)]If   $\mu T_k\leq \mu T_i$ for any $1\leq i\leq n$,  we say that $T_k$ is a minimal direct summand of $T$.
Moreover, if $\mu T_k<\mu T_i$ for any $i\neq k$, we say that $T_k$ is the only minimal  direct summand of $T$.
 \item[(2)] If  $\mu T_l\ge \mu T_i$ for any $1\leq i\leq n$,   we say that $T_l$ is a maximal direct summand  of $T$.
Moreover, if $\mu T_l>\mu T_i$ for any $i\neq l$,   we say that  $T_l$ is the only maximal  direct summand of $T$.
\item[(3)] Let $T_k$ be a minimal direct summand of $T$, $T_l$ be a maximal direct summand  of $T$, we call the interval $[\mu T_k, \mu T_l]$ the slope range of $T$.
\end{itemize}
\end{df}
Note that the first object of a tilting sheaf may not be a minimal direct summand, and a minimal direct summand of a tilting sheaf also may not be a first object. But the  only minimal  direct summand of $T$  must be a first object.

\begin{lem}\label{tiling module and tilting complex}
Let   $T=X\oplus \overline{T}$ be a tilting sheaf in $\coh\X$ where $X\in\ind\X$.
\begin{itemize}
 \item[(1)] If $X$ is the only maximal  direct summand of $T$, then $T'=\tau X[-1]\oplus \overline{T}$ is a tilting complex in $\md^b(\X)$.
 \item[(2)]  If $X$ is the only minimal  direct summand of $T$, then $\tauni X[1]\oplus \overline{T}$ is a tilting complex in $\md^b(\X)$.
\end{itemize}
\end{lem}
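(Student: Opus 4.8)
The plan is to prove $(1)$; part $(2)$ follows by the dual argument (or by applying $(1)$ in the opposite category $\coh\X^{op}$, using that $\tauni$ is the Serre functor there). So assume $T = X\oplus\overline T$ is a tilting sheaf with $X$ the only maximal direct summand, i.e. $\mu X > \mu T_i$ for every indecomposable summand $T_i$ of $\overline T$. I want to show $T' = \tau X[-1]\oplus\overline T$ is a tilting complex in $\md^b(\X)$, that is, $T'$ is rigid in $\md^b(\X)$, it generates $\md^b(\X)$ as a thick subcategory, and it has the right number $n$ of indecomposable summands (the last point is immediate since $\tau X[-1]$ is indecomposable in $\md^b(\X)$ and still not isomorphic to any summand of $\overline T$ as they sit in different shifts of $\coh\X$). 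Generation is also clear: $T'$ generates the same thick subcategory as $T$, since $\tau X[-1]$ and $X$ differ by the autoequivalence $\tau[-1]$ composed appropriately — more concretely, $\thick(T') = \thick(\overline T, \tau X[-1])$ and, as $\coh\X$ is hereditary with $T = \overline T\oplus X$ tilting, $\thick(\overline T, X[-1]) = \thick(\overline T, X) = \md^b(\X)$, and $\tau X$ generates the same thick subcategory as $X$ up to the action of $\tau$, which is an autoequivalence; the cleanest route is to observe directly from the vanishing computations below that $\Hom_{\md^b(\X)}(T', \md^b(\X))$ detects objects, hence $T'$ is a tilting object.

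The heart of the matter is the rigidity of $T'$, i.e. the vanishing of
$$
\Hom_{\md^b(\X)}\bigl(T',\,T'[j]\bigr) = 0 \quad\text{for all } j\neq 0.
$$
Since $\coh\X$ is hereditary, the only potentially nonzero pieces for $j\neq 0$ are the $j=\pm1$ pieces, and I must check the four mixed/unmixed blocks. \emph{Block $\overline T$ vs.\ $\overline T$:} these vanish because $\overline T$ is rigid in $\coh\X$. \emph{Block $\tau X[-1]$ vs.\ $\tau X[-1]$:} $\Hom_{\md^b}(\tau X[-1],\tau X[-1][j]) = \Ext^j_\X(\tau X,\tau X) = \Ext^j_\X(X,X)$, which vanishes for $j\neq0$ since $X$ is rigid. \emph{Block $\tau X[-1]$ vs.\ $\overline T$:} we need $\Hom_{\md^b}(\tau X[-1], T_i[j]) = \Ext^{j+1}_\X(\tau X, T_i)$ to vanish for $j=0$ (giving $\Ext^1_\X(\tau X, T_i)$) and we only get the hereditary constraint for $j=0,\pm1$; the relevant nonzero candidates are $j=0$, i.e.\ $\Ext^1_\X(\tau X,T_i) \cong \D\Hom_\X(T_i,\tau^2 X)$, and $j=-1$, i.e.\ $\Hom_\X(\tau X,T_i)$. \emph{Block $\overline T$ vs.\ $\tau X[-1]$:} symmetrically we need $\Hom_\X(T_i,\tau X)$ (the $j=1$ term, which would be $\Ext^1_\X(T_i,\tau X[-1][?])$—bookkeeping to be done carefully) and $\Ext^1_\X(T_i,\tau X)\cong\D\Hom_\X(\tau X,\tau T_i)=\D\Hom_\X(X,T_i)$.

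The main obstacle, therefore, is showing that these Hom and Ext groups between $\tau X$ (or $\tau^2 X$) and the summands $T_i$ of $\overline T$ vanish, and this is exactly where the hypothesis ``$X$ is the \emph{only maximal} summand'' enters through slopes. Every indecomposable sheaf is semistable (tubular case), $\tau$ preserves slopes, so $\mu(\tau X) = \mu(\tau^2 X) = \mu X > \mu T_i$ for all $i$. By Lemma~\ref{cor got by Riemann-Roch theorem} (no backward maps between distinct slopes) we immediately get $\Hom_\X(\tau X, T_i) = 0$ and $\Hom_\X(\tau^2 X, T_i) = 0$, killing the ``$\Ext^1_\X(\tau X,T_i)$'' terms and the ``$\Hom_\X(\tau X,T_i)$'' terms above. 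For the remaining terms $\Hom_\X(T_i,\tau X)$ and $\Ext^1_\X(T_i,\tau X)$: the latter is $\D\Hom_\X(X,T_i)$, and since $\mu X > \mu T_i$ this too is $0$ by Lemma~\ref{cor got by Riemann-Roch theorem}; the former, $\Hom_\X(T_i,\tau X)$, is $\cong\D\Ext^1_\X(X,T_i)$, which vanishes because $T = X\oplus\overline T$ is rigid in $\coh\X$ (this is the only place the original rigidity of $T$, rather than just of $\overline T$, is used). Assembling all four blocks gives $\Hom_{\md^b(\X)}(T',T'[j])=0$ for $j\neq0$, so $T'$ is rigid; combined with the count of indecomposable summands and generation, $T'$ is a tilting complex. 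Part $(2)$ is entirely dual: now $X$ has strictly smallest slope, $\mu(\tauni X) = \mu X < \mu T_i$, and the same four-block check with the roles of $\Hom$ and $\Ext$ swapped, again using Lemma~\ref{cor got by Riemann-Roch theorem} together with the rigidity of $T$, yields the claim.
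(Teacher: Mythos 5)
Your argument is correct and follows essentially the same route as the paper's (much terser) proof: the slope hypothesis $\mu(\tau X)=\mu X>\mu T_i$ kills $\Hom_\X(\tau X,\overline T)$, Serre duality together with the rigidity of $T=X\oplus\overline T$ kills the remaining cross terms $\Hom_\X(\overline T,\tau X)$ and $\Ext^1_\X(\overline T,\tau X)\cong\D\Hom_\X(X,\overline T)$, and the count of $n$ indecomposable summands finishes the job via Meltzer's criterion for tilting complexes. One bookkeeping slip worth correcting: the $j=0$ cross term $\Hom_{\md^b(\X)}(\tau X[-1],T_i)=\Ext^1_\X(\tau X,T_i)\cong\D\Hom_\X(T_i,\tau^2 X)$ does \emph{not} need to vanish (it lives in $\End(T')$, not in $\Hom(T',T'[j])$ for $j\neq 0$) and in general it does not vanish — your claim that $\Hom_\X(\tau^2 X,T_i)=0$ kills it dualizes the wrong Hom space, since $\Hom_\X(T_i,\tau^2X)$ is typically nonzero when $\mu T_i<\mu X$ by Lemma~\ref{cor got by Riemann-Roch  theorem} — but as this term is irrelevant to the tilting condition, the conclusion is unaffected.
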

\begin{proof} We only prove $(1)$ here, and $(2)$ could be deduced similarly.

For $(1)$, since $X$ is the only maximal  direct summand of $T$,  $\mu (\tau X)=\mu X>\mu T_i$ for any direct summand $T_i$ of $\overline{T}$, then $$\Hom_{\md^b(\X)}(\tau X[-1], \overline{T}[-1])\cong\Hom_{\X}(\tau X, \overline{T})=0.$$
Then using  $T=X\oplus \overline{T}$ be a tilting sheaf in $\coh\X$, one can get that $\Hom_{\md^b(\X)}(T',T'[l])=0$ for any $l\neq 0$ easily. Since the number of non-isomorphic  indecomposable direct summands of $T'$ is $n$, we conclude that $T'$ is a tilting complex in $\md^b(\X)$ by \cite[Lemma 9.1.3]{M2}.
\end{proof}

\subsection{The behavior of slopes under mutations}
Let $\X$ be a tubular weighted projective line. In this subsection, we investigate how the slope changed when we make mutations at some special direct summands of  a tilting sheaf.

\begin{lem}\label{mutation at quasi-length}
Let $T=T_k\oplus\overline{T}$ be a tilting object in $\coh\X$ with an indecomposable direct summand $T_k$. Let $\mu_{T_k}(T)=T_k^*\oplus\overline{T}$ be the mutation of $T$ at $T_k$.
If  there is  another direct summand of $T$,  denoted by $Z$, such that   $T_k\in\mw_Z$, then $T_k^*\in \mw_Z$.  {Otherwise, $T_k^*$ and $T_k$ have different slopes.}
\end{lem}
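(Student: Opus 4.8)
The plan is to analyze the mutation exchange sequence using the structure of the wing $\mw_Z$ as a module category over a linearly-oriented $A_l$ quiver, together with the morphism-vanishing properties of wings from Lemma~\ref{mor}. First I would treat the case where such a $Z$ exists. By Lemma~\ref{tilting in wing}(1), the direct sum $N$ of all summands of $T$ lying in $\mw_Z$ is a tilting object in $\mw_Z\simeq\mod A_l$, and writing $T=T'\oplus N$ with $T_k$ a summand of $N$, mutation of $T$ at $T_k$ should restrict to mutation of $N$ at $T_k$ inside $\mw_Z$. Concretely: either there is an exact sequence $0\to T_k\xra{f} B\to T_k^*\to 0$ or $0\to T_k^*\to B'\xra{g} T_k\to 0$, where $f$ (resp. $g$) is the minimal left (resp. right) $\add\overline{T}$-approximation, by Lemma~\ref{induce}. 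The key point is that the terms $B$ (or $B'$) can be taken to lie in $\add N$, hence in $\mw_Z$: indeed any summand $T_i$ of $\overline{T}$ that is \emph{not} in $\mw_Z$ satisfies $\Hom_\X(T_i,T_k)=0=\Hom_\X(T_k,T_i)$ by Lemma~\ref{mor} (since $T_k\in\mw_Z$ and the relevant $\Hom$ from/into $Z$ already vanishes by rigidity of $T$, using that $\mw_{\tau Z}=\tau\mw_Z$ as in the proof of Lemma~\ref{tilting in wing}), so such $T_i$ contribute nothing to a minimal approximation of $T_k$. Therefore the middle term lies in $\add N\subseteq\mw_Z$, and since $\mw_Z$ is an exact abelian subcategory (a wing), the kernel and cokernel $T_k^*$ again lie in $\mw_Z$, as desired.

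For the second assertion, suppose no direct summand $Z$ of $T$ has $T_k\in\mw_Z$ with $Z\neq T_k$; equivalently $T_k$ is not properly contained in the wing of another summand. Assume for contradiction that $\mu T_k^*=\mu T_k$. Then $T_k$ and $T_k^*$ both lie in $\mc_{\mu T_k}$, which by the tubular classification (Theorem~\cite[5.6]{GL1}) is again a family of pairwise orthogonal standard tubes. In particular the exchange sequence from Lemma~\ref{induce}, say $0\to T_k\xra{f} B\to T_k^*\to 0$, has its middle term $B\in\add\overline{T}$ and, since $\Hom_\X(\mc_q,\mc_{q'})=0$ for $q>q'$ and $\Ext^1$ vanishes between distinct slopes, a slope/degree count ($\rank$ and $\deg$ are additive) forces every indecomposable summand of $B$ appearing with the relevant multiplicity to have slope $\mu T_k$ as well — so $B$, $T_k$, $T_k^*$ all lie in the single tube $\mt_{T_k}$. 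Then $T_k^*$ lies in a wing of some quasi-simple object of that tube, and chasing the exchange sequence inside the tube, $T_k$ and $T_k^*$ are the two complements of an almost complete tilting object of the tube, which forces one to lie in the wing $\mw_Z$ of a common summand $Z$ of $T$ in that tube — contradicting our hypothesis. Hence $\mu T_k^*\neq\mu T_k$.

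The main obstacle I anticipate is the second part: making rigorous the claim that when $\mu T_k^*=\mu T_k$ the exchange forces $T_k$ into the wing of another summand. The delicate point is to rule out the possibility that $T_k$ is itself quasi-simple in its tube while $T_k^*$ lives in a different tube of the same slope-subcategory $\mc_{\mu T_k}$ — here one must use that mutation is governed by nonzero $\Ext^1$, that $\Ext^1$ between objects in distinct tubes of $\mc_{\mu T_k}$ vanishes (orthogonality of the tubes), and that therefore $T_k^*$ must lie in the same tube as $T_k$; combined with the wing description of tilting objects in a tube (Lemma~\ref{tilting in wing} applied inside $\mc_{\mu T_k}$), this pins down the situation. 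I would handle this by carefully invoking orthogonality of the standard tubes in $\mc_{\mu T_k}$ together with Lemma~\ref{induce} to locate both complements in one tube, and then appeal to the $A_l$-module picture of that tube's wing to produce the required common summand $Z$.
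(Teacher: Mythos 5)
There is a genuine gap in both halves of your argument. In the first half, the claim that every summand $T_i$ of $\overline{T}$ outside $\mw_Z$ satisfies $\Hom_{\X}(T_i,T_k)=0=\Hom_{\X}(T_k,T_i)$ is false, and Lemma~\ref{mor} does not support it: rigidity of $T$ only gives $\Ext^1_{\X}(Z,T_i)=0=\Ext^1_{\X}(T_i,Z)$, which by Serre duality reads $\Hom_{\X}(T_i,\tau Z)=0$ and $\Hom_{\X}(Z,\tau T_i)=0$; feeding these into Lemma~\ref{mor} yields only the vanishing of $\Ext^1$ between $T_i$ and objects of $\mw_Z$ (this is precisely how Lemma~\ref{tilting in wing} is proved), never the vanishing of $\Hom$. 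In fact $\Hom$ between summands of a tilting sheaf is typically nonzero in both ``directions'' across different wings (the endomorphism algebra of $T$ is connected), so you cannot conclude that the minimal $\add\overline{T}$-approximation of $T_k$ lies in $\add N$ this way; and deducing it from $T_k^*\in\mw_Z$ together with extension-closedness of the wing would be circular. The paper sidesteps the approximation entirely: since $Z$ is projective-injective and sincere in $\mw_Z\simeq\mod A_l$, the almost complete tilting module $N\ominus T_k$ already has two complements inside $\mw_Z$, and by Lemma~\ref{tilting in wing}(2) the second one again completes $\overline{T}$ to a tilting sheaf; uniqueness of the two complements of $\overline{T}$ then forces that object to be $T_k^*$, hence $T_k^*\in\mw_Z$.

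In the second half you correctly reduce, via orthogonality of the tubes of $\mc_{\mu T_k}$, to the situation where $T_k$, $T_k^*$ and the middle term all lie in the single tube $\mt_{T_k}$ (though note that to see every summand of $B$ has slope $\mu T_k$ you need the sandwich argument --- each summand receives a nonzero map from $T_k$ and maps nonzero onto part of $T_k^*$ by minimality of the approximation, so its slope is squeezed between $\mu T_k$ and $\mu T_k^*$ --- additivity of $\rank$ and $\deg$ alone only controls an average). But the decisive step is exactly the one you defer: you must exhibit a summand $Z$ of $T$ with $T_k\in\mw_Z$, not merely ``one of $T_k,T_k^*$ lies in some wing.'' The mechanism is that in the exchange sequence $0\to T_k\to B\to T_k^*\to 0$ (resp.\ $0\to T_k^*\to B'\to T_k\to 0$) some indecomposable summand $Z$ of $B$ (resp.\ of $B'$) receives a monomorphism from $T_k$ (resp.\ admits an epimorphism onto $T_k$); since the tube is uniserial, either of these forces $T_k\in\mw_Z$, and $Z$ is a summand of $\overline{T}$, giving the contradiction. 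Without this identification the second assertion remains unproved.
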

\begin{proof}
{Assume that there is a direct summand $Z$ of $T$ such that $T_k\in \mw_Z$. Assume moreover that $q.l. Z=l$.}
 By Lemma~\ref{tilting in wing},  the summands of $T$ lying in $\mw_Z$ can be {viewed} as a tilting module in $\mod A_l$, where $A_l$ is the path algebra of a quiver of Dynkin type $\mathrm{A}_l$ with linear orientation. Note that, as $Z$ is both a projective and injective object in $\mw_Z$,  the tilting modules in $\mw_Z$ always have $Z$ as a direct summand. Since $Z$ is a sincere $A_l$-module, every almost tilting module in $\mw_Z$ containing $Z$ as a direct summand has two complements in $\mw_Z$.   Hence
 if  $T_k\in\mw_Z$ and $Z\ncong T_k$, we can deduce that $T_k^*\in \mw_Z$ by Lemma \ref{tilting in wing}.

 Now assume that there is no such direct summand.
By Lemma~\ref{induce}, there is an exact sequence  $0\ra T_k\xra{f} B\ra T_k'\ra 0$ or  $0\ra T_k^*\ra B'\xra{g} T_k\ra 0$ in $\coh\X$, where $f$ is a minimal left $\add\overline{T}$-approximation and $g$ is a minimal right $\add\overline{T}$-approximation.  Suppose $\mu T_k^*=\mu T_k$. 
Then  each direct summand of $B$ and $B'$ must have the same slope as $T_k$.   If $T_k^*$ and $T_k$ belong to different tubes,  by $\Hom_{\X}(\mt_{T_k},\mt_{T_k^*})=0=\Hom_{\X}(\mt_{T_k^*},\mt_{T_k})$,  neither of the exact sequences $0\ra T_k\xra{f} B\ra T_k'\ra 0$ or  $0\ra T_k^*\ra B'\xra{g} T_k\ra 0$ will hold.
   So $T_k^*$ lies in the same tube as $T_k$,  then  there must be one direct summand of $B$ or $B'$, denoted by $Z$, such that $T_k$ has a monomorphism to $Z$ or $Z$ has an epimorphism to $T_k$. In any case, $T_k\in \mw_Z$, which contradicts with the assumption. Therefore, $T_k$ and $T_k^*$ have different slopes.
   \end{proof}

\begin{thm}\label{slop change under APR and coAPR mutation}
Let $T=\bigoplus_{i=1}^n T_i$ be a tilting sheaf in $\coh\X$ with $\mu T_i \leq \mu T_j$ for $1\leq i<j\leq n$. For $1\leq k\leq n$, let  $\mu_{T_k}(T)=T_k^*\oplus (\bigoplus_{i\neq k} T_i)$ be the mutation of $T$ at $T_k$. Then
\begin{itemize}
 \item[(1)] $\mu T_1\leq \mu T_1^*\leq \mu T_n$ and $\mu T_1\leq \mu T_n^*\leq \mu T_n$.


\item[(2)] If $T_k$ is a first object of $T$, then $\mu T_k\leq \mu T_k^*\leq \mu T_n$.
\item[(3)] If $T_k$ is a last object of $T$, then $\mu T_k\ge \mu T_k^*\ge\mu T_1$.

\end{itemize}
\end{thm}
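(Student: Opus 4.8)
The plan is to analyse the two short exact sequences attached to the mutation by Lemma~\ref{induce}, exploiting that over a tubular $\X$ every indecomposable sheaf is semistable, so that Hom-vanishing between indecomposables is governed purely by slopes (Lemma~\ref{cor got by Riemann-Roch  theorem}, and $\Hom_{\X}(\mc_q,\mc_{q'})=0$ for $q>q'$). By Lemma~\ref{induce} we are in one of two cases: (A) $0\to T_k\xra{f}B\to T_k^*\to 0$ with $f$ a minimal left $\add\ol{T}$-approximation, or (B) $0\to T_k^*\to B'\xra{g}T_k\to 0$ with $g$ a minimal right $\add\ol{T}$-approximation. If $T_k$ is a first object then $\Hom_{\X}(\ol{T},T_k)=0$, so the minimal right $\add\ol{T}$-approximation of $T_k$ is $0$, which is not epi; by Lemma~\ref{Hu} we are then in case (A), and dually a last object forces case (B). In case (A), left-minimality of $f$ and semistability force $\mu T_k\le\mu B_j$ for every indecomposable summand $B_j$ of $B$ (one with $\mu B_j<\mu T_k$ would receive the zero map from $T_k$, contradicting minimality of $B$), while $B_j\in\add T$ gives $\mu B_j\le\mu T_n$; and as $T_k^*$ is a non-zero quotient of $B$, those $B_j$ with $\mu B_j>\mu T_k^*$ map to $0$, so some $B_{j_0}$ has $\mu B_{j_0}\le\mu T_k^*$, whence $\mu T_1\le\mu T_k\le\mu B_{j_0}\le\mu T_k^*$. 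Dually, in case (B), $\mu T_k^*\le\mu T_k\le\mu T_n$. This yields the ``inner'' inequalities in (1)--(3); what remains is the ``outer'' estimate $\mu T_k^*\le\mu T_n$ in case (A), and dually $\mu T_k^*\ge\mu T_1$ in case (B) --- needed only when $T_k$ is a first object, a last object, or $T_k\in\{T_1,T_n\}$.

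The crux is this outer estimate, which I would prove in $\md^b(\X)$ via the thread of complements of $\ol{T}$. Assume, in case (A), that $\mu T_k^*>\mu T_n$. Then $T_k^*$ is the only maximal direct summand of the tilting sheaf $\mu_{T_k}(T)=T_k^*\oplus\ol{T}$, so by Lemma~\ref{tiling module and tilting complex}(1) the object $\tau T_k^*[-1]\oplus\ol{T}$ is a tilting complex in $\md^b(\X)$; equivalently, $\tau T_k^*[-1]$ is a complement of $\ol{T}$ in $\md^b(\X)$. Since $\ol{T}$ has two complements in $\coh\X$, its complements in $\md^b(\X)$ form a single thread, consecutive terms joined by mutation triangles with middle term in $\add\ol{T}$ and outer morphisms minimal left/right $\add\ol{T}$-approximations (standard for the derived category of a hereditary category with a tilting object; cf.~\cite{HU}). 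In that thread $T_k$ and $T_k^*$ are adjacent (by the case-(A) sequence); a local computation shows that all complements past $T_k^*$ are mere shifts $T_k^*[j]$, $j\ge 0$ (since $\mu T_k^*>\mu T_n$ forces $\Hom_{\X}(T_k^*,\ol{T})=0$, so the minimal left $\add\ol{T}$-approximation of $T_k^*$ vanishes, and then $\Hom_{\md^b(\X)}(\ol{T},Y[-1])=0$ for $Y\in\coh\X$ makes the subsequent steps shifts too), while past $T_k$ on the other side the thread is $(\cok g)[-j]$, $j\ge 1$, where $g$ is the minimal right $\add\ol{T}$-approximation of $T_k$: this $g$ is $0$ when $T_k$ is a first object, and a monomorphism whose domain lies in $\mc_{\mu T_1}$ when $T_k=T_1$ (the domain being a summand of $T$ embedded into the semistable sheaf $T_1$, hence of slope $\mu T_1$), so in either case $\cok g$ has slope $\mu T_k$. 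Hence the only complement of $\ol{T}$ lying in $\coh\X[-1]$ has slope $\mu T_k$; as $\tau T_k^*[-1]$ lies there, it is that complement, and therefore $\mu T_k^*=\mu(\tau T_k^*)=\mu T_k\le\mu T_n$ ($\tau$ preserving slopes on the tubular $\coh\X$), a contradiction. Case (B) is symmetric, using Lemma~\ref{tiling module and tilting complex}(2) and $\tauni(-)[1]$.

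Granting this, the theorem follows by combining the two estimates: (2) and (3) are immediate, and for (1) one additionally observes that mutating at $T_1$ in case (B) (resp.\ at $T_n$ in case (A)) forces all summands of $B'$ (resp.\ $B$) into $\mc_{\mu T_1}$ (resp.\ $\mc_{\mu T_n}$) --- by minimality of the approximation together with minimality (resp.\ maximality) of $\mu T_1$ (resp.\ $\mu T_n$) among the slopes of summands of $T$ --- so that $T_k^*$, a subobject (resp.\ quotient) of such an object, has slope $\mu T_1$ (resp.\ $\mu T_n$), giving both inequalities at once. The technically delicate part is the second paragraph: setting up and then explicitly computing the thread of complements of $\ol{T}$ in $\md^b(\X)$ near $T_k$ and $T_k^*$. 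One should also dispose first of the degenerate situations --- $T_k$ of finite length, $\mu T_n=\infty$, or $T_k$ lying in the wing of another direct summand of $T$, the last handled directly by Lemma~\ref{mutation at quasi-length}, which gives $\mu T_k^*=\mu T_k$.
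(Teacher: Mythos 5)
Your first paragraph is sound: the case split via Lemma~\ref{Hu} and Lemma~\ref{induce}, the slope bounds on the indecomposable summands of the minimal approximation, and the resulting ``inner'' inequalities $\mu T_k\le \mu T_k^*$ in case (A) and $\mu T_k^*\le\mu T_k$ in case (B) all go through (the paper obtains the inner bound for $T_1$, $T_n$ by a different route, through wings and Lemma~\ref{mutation at quasi-length}, but your direct slope argument works). The genuine gap is exactly where you flag the ``crux'': the second paragraph. The assertion that ``the complements of $\ol{T}$ in $\md^b(\X)$ form a single thread'' is not a standard fact in the form you invoke it, and it is false as stated. Iterating the exchange construction does not stay inside the set of complements: for instance $T_k^*[1]\oplus\ol{T}$ fails to be rigid as soon as $\Hom_{\X}(\ol{T},T_k^*)\neq 0$ (the generic situation), so the objects $T_k^*[j]$, $j\ge 1$, need not be complements at all; and conversely nothing in your argument shows that \emph{every} $X$ with $X\oplus\ol{T}$ a tilting complex lies on such a thread. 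Consequently the one step you actually need --- that the only complement of $\ol{T}$ contained in $\coh\X[-1]$ is $(\cok g)[-1]$, so that $\tau T_k^*[-1]$ must coincide with it --- is asserted rather than proved. It is plausibly repairable (any degree $-1$ complement $X'[-1]$ forces $X'$ into $\ol{T}^{\perp}$, a perpendicular category of rank one, which essentially pins $X'$ down; but one must then also verify that $\cok g$ lies in $\ol{T}^{\perp}$ and is indecomposable), yet none of this is in your write-up, and the whole theorem rests on it.

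For comparison, the paper closes this gap by a different mechanism. Starting, as you do, from the tilting complex $\tau T_k^*[-1]\oplus\ol{T}$ supplied by Lemma~\ref{tiling module and tilting complex}, it applies a telescopic automorphism $F$ of $\md^b(\X)$ with $F(T_n)\in\mc_{\infty}$ (Theorem~\ref{telescopic functor}). Lemma~\ref{mor under automorphism} then places all the $F(T_i)$ and $F(\tau T_k^*[-1])$ in a single copy of $\coh\X$ while converting $\mu(\tau T_k^*)>\mu T_n\ge\mu T_i$ into $\mu(F(\tau T_k^*[-1]))<\mu(F(T_1))\le\mu(F(T_i))$. Now $F(T_k)$ and $F(\tau T_k^*[-1])$ are the two complements of the almost complete tilting \emph{sheaf} $F(\bigoplus_{i\neq k}T_i)$ inside the heart, $F(T_k)$ is still a first object (resp.\ $F(T_1)$ is still the minimal summand) of $F(T)$, and the already-established inner inequality applied to $F(T)$ forces $\mu(F(\tau T_k^*[-1]))\ge\mu(F(T_k))$, a contradiction. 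This reduction to $\coh\X$, where H\"ubner's two-complement theorem and your first-paragraph estimates apply verbatim, is what replaces your thread computation; without it, or a corrected uniqueness argument for degree $-1$ complements, the outer bound $\mu T_k^*\le\mu T_n$ remains unproved.
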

\begin{proof}  For $(1)$,
if $\mu T_1^*< \mu T_1$, by Lemma~\ref{induce}, there is an exact sequence $$0\ra T_1^*\ra B\xra{g} T_1\ra 0$$  in $\coh\X$, where $g$ is a minimal right $\add(\bigoplus_{i=2}^n T_i)$-approximation. Since $\mu T_1 \leq \mu T_j$ for $1\leq j\leq n$,  all direct summands of $B$ must belong to the same tube with $T_1$. Then there is a direct summand $Z$ of $B$ such that $Z$ has an epimorphism to $T_1$, then $T_1\in\mw_Z$. By Lemma~\ref{mutation at quasi-length}, $T_1^*\in\mw_Z$. This contradicts the assumption that  $\mu T_1^*< \mu T_1$.
Hence $\mu T_1^*\ge \mu T_1$.

 If $\mu T_1^*> \mu T_n$, then $T_1^*$ is the only  maximal direct summand of  the tilting sheaf $T_1^*\oplus (\bigoplus_{i=2}^n T_i)$. By Lemma~\ref{tiling module and tilting complex},  we know $\tau T_1^*[-1]\oplus (\bigoplus_{i=2}^n T_i)$ is a tilting complex in $\md^b(\X)$. By Theorem~\ref{telescopic functor}, we can choose an automorphism $F$ of $\md^b(\X)$  such that   $F(T_n)\in\mc_{\infty}$.
 Hence, $F(T)$ and $F(\tau T_1^*[-1]\oplus (\bigoplus_{i=2}^n T_i))$ are two tilting objects in $\md^b(\X)$.
 Since for any $1\leq i<j\leq n$, $\mu T_i\leq\mu T_j\leq \mu T_n<\mu T_1^*=\mu(\tau T_1^*)$, by Lemma~\ref{mor under automorphism}, we have $ F(\tau T_1^*)\in\coh\X[1]$, \ie $ F(\tau T_1^*[-1])\in\coh\X$ and $F(T_i)\in\coh\X$ and  $$\mu(F(\tau T_1^*[-1]))<\mu (F(T_1))\leq \mu (F(T_i)) \leq \mu (F(T_j))\leq \mu (F(T_n)).$$    Now, we have $F(\tau T_1^*[-1]\oplus (\bigoplus_{i=2}^n T_i))$ and $F(T)$ are two  tilting objects in $\coh\X$. Moreover, by $F(T_1)$ and $F(\tau T_1^*[-1])$ are two complements of the almost tilting object $F(\bigoplus_{i=2}^n T_i)$. By $  \mu (F(T_i)) \leq \mu (F(T_j))$ for any $1\leq i<j\leq n$,  we have $\mu (F(\tau T_1^*[-1]))\ge\mu (F(T_1))$ in $\coh\X$, 
  which is in a contradiction with $\mu(F(\tau T_1^*[-1]))<\mu (F(T_1))$. Therefore, we must have $ \mu T_1^*\leq \mu T_n$.
Using similar proof, we can get the results for $\mu T_n^*$.

For $(2)$,  by Lemma~\ref{induce}, there is an exact sequence  $0\ra T_k\xra{f} B\ra T_k^*\ra 0$ or  $0\ra T_k^*\ra B'\xra{g} T_k\ra 0$ in $\coh\X$, where $f$ is a minimal left $\add(\bigoplus_{i\neq k} T_i)$-approximation and $g$ is a minimal right  $\add(\bigoplus_{i\neq k} T_i)$-approximation.
Since $T_k$  is the first object of $T$, then only the exact sequnece  $0\ra T_k\ra B\xra{g} T_k^*\ra 0$  can happen.  So $\mu T_k\leq \mu T_k^*$.

If $ \mu T_k^*> \mu T_n$,  similar to the case $(1)$, one can get a contradiction. Hence, $ \mu T_k^*\leq \mu T_n$,

 {The proof of $(3)$ is similar to $(2)$ and we omit it here.}
\end{proof}

\subsection{Bundle-mutations}
Let $\X$ be a tubular weighted projective line.   Let  $T$ and $T'$ be two tilting objects in $\coh\X$ such that    $T'$ is a mutation of $T$,  if $T,T'$ are both tilting bundles,  we call such mutation a {\it bundle-mutation}.

Let $T$ and $T'$ be tilting bundles in $\coh\X$, we say that $T$ can be transformed into $T'$ by bundle-mutations if there is a sequence of tilting bundles $M_0=T,M_1,\cdots,M_{l-1},M_l=T'$ such that $M_{i}$ is a mutation of $M_{i-1}$ for any $1\leq i\leq l$.
Note that if $T$ can be transformed into $T'$ by bundle-mutations, then $T'$ also can be transformed into $T$ by bundle-mutations.  

 As a direct consequence of Theorem~\ref{slop change under APR and coAPR mutation}, we have
\begin{lem}\label{APR and co-APR mutations are l mutations}
 Both the $\APR$ mutation and co-$\APR$ mutation on a tilting bundle in $\coh\X$  are bundle-mutations.
\end{lem}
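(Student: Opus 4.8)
The plan is to show that if $T$ is a tilting bundle and $T' = \mu_{T_k}(T)$ is obtained by an $\APR$ mutation (the co-$\APR$ case being dual), then $T'$ is again a tilting bundle, so that the mutation is a bundle-mutation. Write $T = \bigoplus_{i=1}^n T_i$ with $\mu T_i \le \mu T_j$ for $i < j$, and suppose $T_k$ is a first object, so that $\mu_{T_k}(T) = T_k^* \oplus (\bigoplus_{i \neq k} T_i)$. Since all the $T_i$ are already vector bundles (as $T$ is a tilting bundle), the only thing to check is that $T_k^*$ is a vector bundle, i.e. that $T_k^*$ has no finite-length direct summand; equivalently $T_k^* \notin \coh_0\X$, i.e. $\mu T_k^* \neq \infty$.

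First I would invoke Theorem~\ref{slop change under APR and coAPR mutation} directly. By part (2) of that theorem, since $T_k$ is a first object of $T$, we have $\mu T_k \le \mu T_k^* \le \mu T_n$. Now $T$ is a tilting bundle, so every $T_i$ is a vector bundle and in particular has finite (rational) slope; thus $\mu T_n < \infty$. Therefore $\mu T_k^* \le \mu T_n < \infty$, which forces $T_k^*$ to have no summand of finite length, so $T_k^*$ is a vector bundle. Hence $T'$ is a tilting bundle, and the $\APR$ mutation is a bundle-mutation. For the co-$\APR$ case, one uses part (3): if $T_k$ is a last object, then $\mu T_1 \le \mu T_k^* \le \mu T_k < \infty$, again giving that $T_k^*$ is a bundle. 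I would also remark that parts (1) and (2)--(3) together cover the extreme summands $T_1, T_n$ as well, but the statement only claims the result for $\APR$ and co-$\APR$ mutations, which are mutations at first and last objects respectively, so (2) and (3) suffice.

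There is really no substantial obstacle here: the lemma is a one-line corollary of Theorem~\ref{slop change under APR and coAPR mutation} together with the observation that a vector bundle has finite slope whereas a sheaf with a finite-length summand does not. The only point requiring a word of care is making explicit that ``tilting bundle'' means no finite-length summand, so that $\mu T_n$ is a genuine rational number and the inequality $\mu T_k^* \le \mu T_n$ genuinely excludes $\mu T_k^* = \infty$; and dually for co-$\APR$ that $\mu T_1$ is finite. I would phrase the proof as: ``Since $T$ is a tilting bundle, $\mu T_i \in \Q$ for all $i$; order the summands so that $\mu T_1 \le \cdots \le \mu T_n$. If $T'$ is obtained from $T$ by an $\APR$ mutation at a first object $T_k$, then by Theorem~\ref{slop change under APR and coAPR mutation}(2), $\mu T_k^* \le \mu T_n < \infty$, so $T_k^*$ is a vector bundle and $T'$ is a tilting bundle. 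The co-$\APR$ case follows dually from Theorem~\ref{slop change under APR and coAPR mutation}(3).''
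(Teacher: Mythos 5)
Your argument is correct and is precisely the intended one: the paper gives no explicit proof, simply declaring the lemma ``a direct consequence of Theorem~\ref{slop change under APR and coAPR mutation},'' and your write-up supplies exactly the missing detail (an indecomposable sheaf is a vector bundle iff its slope is finite, and parts (2)--(3) of the theorem bound $\mu T_k^*$ by the finite slopes $\mu T_n$, resp. $\mu T_k$). Nothing to correct.
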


Then combining  Lemma~\ref{component}, Lemma~\ref{tilting object from perpendicular category to hereditary category} and Lemma~\ref{pp-mutation is mutation of h}, one can get
\begin{lem}\label{APR and co-APR mutations are bundle mutations}
Let  $X$ be quasi-simple rigid vector bundle in $\coh\X$.  Then
 the $\APR$ mutation  and co-$\APR$ mutation  in $X^{\perp}$  on a preprojective tilting module induce bundle-mutations in $\coh\X$, while the co-$\APR$ mutation  in $^{\perp}X$ on a preinjective tilting module   which is a vector bundle in $\coh\X$ is also a bundle-mutation in $\coh\X$.
\end{lem}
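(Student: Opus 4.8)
The plan is to combine the perpendicular-category results of Lemma~\ref{component} and Lemma~\ref{tilting object from perpendicular category to hereditary category} with the structural facts about APR/co-APR mutations in tame hereditary module categories (Lemma~\ref{APR mutation on preprojective tilting module}), upgraded to $\coh\X$ via Lemma~\ref{pp-mutation is mutation of h}, and then to show that each step preserves the property of being a tilting \emph{bundle}. Concretely, let $X$ be a quasi-simple rigid vector bundle. First I would treat the case of a preprojective tilting module $M$ in $X^{\perp}$. By Lemma~\ref{tilting object from perpendicular category to hereditary category}(1), $X\oplus M$ is a tilting object in $\coh\X$, and since $X$ is a bundle and every indecomposable summand of $M$ lies in the preprojective component of the tame hereditary category $X^{\perp}$, each such summand has slope $<\mu X$ by Lemma~\ref{component}, so has finite $\tau$-period in $\coh\X$ (equivalently lies in a tube of $\coh\X$) only if it is regular, which it is not; hence $X\oplus M$ is a tilting bundle. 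Now if $M'$ is obtained from $M$ by an APR or co-APR mutation in $X^{\perp}$, then $M'$ is again preprojective by Lemma~\ref{APR mutation on preprojective tilting module}(a), so by the same argument $X\oplus M'$ is a tilting bundle, and by Lemma~\ref{pp-mutation is mutation of h}(1) the passage $X\oplus M\rightsquigarrow X\oplus M'$ is a single mutation in $\coh\X$ between two tilting bundles, i.e. a bundle-mutation.

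Next I would handle the co-APR mutation on a preinjective tilting module $N$ in $^{\perp}X$ under the stated hypothesis that the resulting $\coh\X$-object is a vector bundle. By the dual of Lemma~\ref{tilting object from perpendicular category to hereditary category}, $X\oplus N$ is a tilting object in $\coh\X$; it need not be a bundle in general, since summands of $N$ have slope $>\mu X$ and may lie in tubes of $\coh\X$. But the statement only asserts that \emph{when} the co-APR mutation $N\rightsquigarrow N'$ produces an $N'$ that is a vector bundle (so that $X\oplus N'$ is a tilting bundle), that mutation is a bundle-mutation. For this I need two things: that $X\oplus N$ and $X\oplus N'$ differ by a single mutation in $\coh\X$ — this is Lemma~\ref{pp-mutation is mutation of h}(2) — and that $X\oplus N$ itself is a tilting bundle, which I get because the co-APR mutation in a tame hereditary category changes exactly one indecomposable summand (the ``last'' one), so if both $N$ and $N'$ are assumed to be bundles in the relevant instance, then actually the relevant reduction is to the case where the mutation already lies between two tilting bundles. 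I would make this precise by invoking Lemma~\ref{induce}: one of the two exact triangles realizing the exchange holds in $\coh\X$, and since the two complements $N_k,N_k^{*}$ of the almost-complete tilting module are both bundles by hypothesis, the middle term $B$ (a direct sum of summands of $X\oplus\overline{N}$, all bundles) forces $X\oplus N$ and $X\oplus N'$ to both be bundles.

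The main obstacle, and the point that deserves the most care, is exactly this last bundle-preservation issue for the preinjective side: unlike the preprojective case, where slopes $<\mu X$ automatically rule out torsion summands, here nothing a priori prevents a preinjective summand of $^{\perp}X$ from being a finite-length sheaf in $\coh\X$, so one genuinely must use the hypothesis ``which is a vector bundle in $\coh\X$'' and argue that the exchange sequence keeps us inside $\vect\X$ at both endpoints. I would phrase the proof so that the preprojective assertions of Lemma~\ref{component}, Lemma~\ref{tilting object from perpendicular category to hereditary category}(1) and Lemma~\ref{pp-mutation is mutation of h}(1) do all the work on one side, and on the other side explicitly restrict attention to the hypothesised bundle instance and run Lemma~\ref{induce} together with Lemma~\ref{pp-mutation is mutation of h}(2); the whole argument is then short, with the only nonroutine input being the slope bookkeeping that identifies ``preprojective in $X^{\perp}$'' with ``slope $<\mu X$ and torsion-free in $\coh\X$''.
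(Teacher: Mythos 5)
Your overall strategy coincides with the paper's, which offers no written proof at all but simply cites the combination of Lemma~\ref{component}, Lemma~\ref{tilting object from perpendicular category to hereditary category} and Lemma~\ref{pp-mutation is mutation of h}; your preprojective half is correct and is exactly the intended argument. The only point worth tightening there is the phrasing: the reason each summand of a preprojective tilting module $M\subset X^{\perp}$ is a vector bundle is simply that its slope is $<\mu X<\infty$ by Lemma~\ref{component}, whereas finite-length sheaves are precisely the objects of slope $\infty$; the remark about finite $\tau$-periods and tubes is a red herring, since in tubular type \emph{every} indecomposable lies in a tube.

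The preinjective half, however, contains a genuine problem as written. First, you misread the hypothesis: ``which is a vector bundle in $\coh\X$'' qualifies the preinjective tilting module $N$ being mutated, not its mutation $N'$ --- and this reading is forced by the way the lemma is used in Lemma~\ref{case common object which is the last object}, where one starts from a preinjective tilting bundle and must iterate co-$\APR$ mutations, so bundleness of $N'$ has to be \emph{proved}, not assumed. Second, your justification is circular: you say the two complements are ``both bundles by hypothesis'' and that this ``forces'' both tilting objects to be bundles, which proves nothing. The correct argument is directional. Since the mutation is at a last object $N_k$, Lemma~\ref{induce} gives the exchange sequence $0\to N_k^{*}\to B'\xra{g} N_k\to 0$ with $B'\in\add(X\oplus\overline{N})$, which is a vector bundle because $X$ and $\overline{N}$ are; hence $N_k^{*}=\ker g$ is a subsheaf of a vector bundle and therefore torsion-free, i.e.\ a vector bundle. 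Note that the dual implication fails --- a quotient of a vector bundle need not be torsion-free --- so the argument genuinely depends on the new complement sitting as the kernel, which is exactly what the last-object hypothesis guarantees. With this substitution the proof is complete and matches what the paper leaves implicit.
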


\section{Connectedness of tilting bundles}\label{connectedness of tilting bundles}
Let $\X$ be a tubular weighted projective line with weight sequence $(p_1,\cdots,p_t)$, up to permutation,  we   assume that $p_i\leq p_j$ for $i<j$. Set $p:=\lcm(p_1,\cdots,p_t)$.  Recall that the tubular weight types are, $(2,2,2,2),(3,3,3),(2,4,4)$ and $(2,3,6)$. It follows that $p=p_t$, $\delta(\vec{x}_t)=p/p_t=1$ and  $\mu (\co(m\vec{x}_t))=\delta(m\vec{x}_t)=m\delta(\vec{x}_t)=m$ for any integer $m$. Denote by $T_{can}=\bigoplus_{0\leq \vec{x}\leq \vec{c}}\co(\vec{x})$ the canonical tilting bundle in $\coh\X$.
 \subsection{Some tilting bundles}
\begin{lem}\label{tilting bundle passing X1}
Let $X$  be a rigid vector bundle in $\coh\X$,  then there is a tilting bundle with $X$ as a direct summand.
\end{lem}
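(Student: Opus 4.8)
The plan is to reduce the statement to the existence of a tilting object in a perpendicular category and then to invoke the structural results already assembled in Section~\ref{properties of tubular type}. Given a rigid vector bundle $X$, the first observation is that $X$ need not be quasi-simple, so I cannot directly apply Proposition~\ref{perpendicular cat is tame hereditary} to $X$ itself. The first step is therefore to pass from $X$ to a quasi-simple rigid summand of the wing it determines: if $X$ is quasi-simple we are already in good shape, and otherwise we use Lemma~\ref{tilting in wing}$(3)$ (after first extending $X$ to \emph{some} tilting sheaf via Lemma~\ref{Hu} / the Bongartz-type completion that underlies it) to locate a quasi-simple rigid vector bundle $X_0$ with $X \in \mw_{X_0}$; by Lemma~\ref{tilting in wing}$(2)$ a tilting bundle containing $X_0$ and a suitable tilting object of the wing $\mw_{X_0}$ that has $X$ as a summand will then have $X$ as a summand. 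Actually, since $X$ is a vector bundle, every indecomposable in $\mw_X$ is a vector bundle too, so this wing manoeuvre stays inside $\vect\X$.

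Next, having a quasi-simple rigid vector bundle $X_0$ in hand (with $X\in\add N$ for a tilting object $N$ of $\mw_{X_0}$), I would apply Proposition~\ref{perpendicular cat is tame hereditary}: the right perpendicular category $X_0^{\perp}$ is the module category of a connected tame hereditary algebra $H$. Such an $H$ certainly has tilting modules --- for instance a preprojective one, obtained by a finite number of APR mutations from the slice given by the indecomposable projectives (Lemma~\ref{APR mutation on preprojective tilting module}). Choose a preprojective tilting $H$-module $M$; by Lemma~\ref{component}, all its indecomposable summands have slope strictly below $\mu X_0$, hence in particular all of them are vector bundles (a finite-length sheaf would lie in $\mc_\infty$, of infinite slope). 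Then Lemma~\ref{tilting object from perpendicular category to hereditary category}$(1)$ tells us $X_0 \oplus M$ is a tilting object of $\coh\X$, and since both $X_0$ and every summand of $M$ are vector bundles, $X_0\oplus M$ is a tilting bundle.

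Finally I would splice the two constructions together using Lemma~\ref{tilting in wing}. The tilting bundle $X_0\oplus M$ has $X_0$ as a summand; write it as $T'\oplus N_0$ with $N_0$ the sum of the summands in $\mw_{X_0}$ (so $N_0$ is a tilting object of $\mw_{X_0}$ by part~$(1)$, and a priori $N_0$ could just be $X_0$). By part~$(2)$, replacing $N_0$ by \emph{any} tilting object $N$ of $\mw_{X_0}$ yields a tilting sheaf $T'\oplus N$; choosing $N$ to be a tilting object of the wing with $X$ as a direct summand (such $N$ exists because in the type-$A_l$ category $\mw_{X_0}\simeq\mod A_l$ every indecomposable, being rigid, extends to a tilting module) produces a tilting sheaf containing $X$. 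Since $X$, $X_0$, and all of $T'$ are vector bundles, and every object of $\mw_{X_0}$ is a vector bundle, $T'\oplus N$ is a tilting bundle, which is exactly what we want.

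The main obstacle I anticipate is the bookkeeping at the wing step: one must be careful that the quasi-simple rigid bundle $X_0$ chosen so that $X\in\mw_{X_0}$ really can be taken to be a vector bundle (this is automatic since $X$ is a vector bundle and torsion subquotients cannot appear in the wing of a bundle, using that there is no nonzero map $\coh_0\X\to\vect\X$), and that the replacement in Lemma~\ref{tilting in wing}$(2)$ preserves the bundle property; both are genuinely short once phrased correctly, but they are where the argument could go wrong if one is cavalier. Everything else is a direct assembly of Proposition~\ref{perpendicular cat is tame hereditary}, Lemma~\ref{component}, and Lemma~\ref{tilting object from perpendicular category to hereditary category}.
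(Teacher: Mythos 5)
Your construction of a tilting bundle containing a given \emph{quasi-simple} rigid vector bundle (Proposition~\ref{perpendicular cat is tame hereditary} plus Lemma~\ref{component} plus Lemma~\ref{tilting object from perpendicular category to hereditary category}, with the observation that preprojective summands have finite slope and hence are bundles) is correct; it is essentially the paper's proof of Lemma~\ref{tilting bundle passing X}(a),(b). The genuine gap is in your reduction of the general case to the quasi-simple one. You ask for a quasi-simple rigid vector bundle $X_0$ with $X\in\mw_{X_0}$, but the wing determined by a quasi-simple object is a single object ($\mw_{X_0}\simeq\mod A_1$ when $q.l.\,X_0=1$), so $X\in\mw_{X_0}$ forces $X\cong X_0$. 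What Lemma~\ref{tilting in wing}(3) actually yields, after completing $X$ to a tilting sheaf, is a quasi-simple summand $X_0\in\mw_X$ --- the containment goes the \emph{other} way --- and that is of no use for your final splice, which needs a summand $Z$ of the already-constructed tilting bundle with $X\in\mw_Z$. In the bundle $X_0\oplus M$ you build, $X_0$ is the only summand of slope $\mu X_0=\mu X$ (every summand of the preprojective tilting module $M$ has strictly smaller slope), so no such $Z$ exists unless $X$ is itself quasi-simple; and producing a tilting bundle with a summand $Z$ of larger quasi-length satisfying $X\in\mw_Z$ is essentially the statement being proved.

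The paper's argument sidesteps all of this and is considerably shorter: complete $X$ to an arbitrary tilting sheaf $T$, list the finite-length summands $X_1,\dots,X_s$ with $q.l.\,X_i\leq q.l.\,X_{i+1}$, and mutate at them starting from $X_s$. At each step the torsion summand being mutated lies in no other summand's wing (by maximality of its quasi-length among the remaining torsion summands), so Lemma~\ref{mutation at quasi-length} forces its replacement to have a different, hence finite, slope, i.e.\ to be a vector bundle; and $X$, being a bundle, is never one of the mutated summands. If you wish to salvage your perpendicular-category route you must repair the reduction step, for instance by first producing a tilting bundle containing a rigid object $Z$ of the tube of $X$ with $X\in\mw_Z$ and then invoking Lemma~\ref{tilting in wing}(2); but constructing such a bundle is not easier than the original problem.
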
 
\begin{proof} 
Since $X$ is a rigid object in $\coh\X$, we can choose a tilting object $T$ in $\coh\X$ with $X$ as a direct summand. If $T$ is a tilting bundle, we are done. Else suppose that $X_1,\cdots,X_s (s\ge 1)$ are all the indecomposable direct summands of $T$ which lie in $\mc_{\infty}$ and  $q.l. X_i\leq q.l. X_{i+1}$ for $1\leq i\leq s-1$.
Let $T'=\mu_{X_1}\cdots\mu_{X_s}(T)$.
By Lemma~\ref{mutation at quasi-length}, $T'$ has no direct summand lying in $\mc_{\infty}$, so $T'$ is a tilting bundle.  Since $X$ is a vector bundle,  $X\neq X_i$ for any $1\leq i\leq s$,  $X$ is still a direct summand of $T'$. Hence, $T'$ is the tilting bundle we needed.
 \end{proof}
 
\begin{lem}\label{tilting bundle passing X}
Let $X$ be a quasi-simple rigid vector bundle in $\coh\X$, then
\begin{itemize}
\item[(a)] there is a tilting bundle  {such that}   $X$ is the only minimal  direct summand;
 \item[(b)]  there is a tilting bundle {such that}  $X$ is  the only maximal  direct summand;
  \item[(c)] if the $\tau$-period of $X$ is $p$, then  there is a tilting bundle $T$ such that $X$ is a direct summand of $T$ and  $\mu X$ is neither minimal nor maximal among the slopes of all the direct summands of $T$.
  \end{itemize}
\end{lem}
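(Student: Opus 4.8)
The plan is to build all three tilting bundles by working inside the perpendicular category $X^{\perp}$ (for the "minimal" statements) or $^{\perp}X$ (for the "maximal" statement), where Proposition~\ref{perpendicular cat is tame hereditary} guarantees we are in the module category of a connected tame hereditary algebra, and then to transport tilting modules back to $\coh\X$ via Lemma~\ref{tilting object from perpendicular category to hereditary category}. For part (a), since $X$ is a quasi-simple rigid vector bundle, $X^{\perp}$ is a tame hereditary module category; choose any preprojective tilting module $M$ in $X^{\perp}$ — for instance the direct sum of the indecomposable projectives, which is a slice in the preprojective component. By Lemma~\ref{component}, every indecomposable summand of $M$ has slope strictly smaller than $\mu X$ (the "$\mu Z < \mu X$" case), so $T := X \oplus M$ is a tilting object (Lemma~\ref{tilting object from perpendicular category to hereditary category}(1)) in which $X$ is the \emph{only} minimal direct summand. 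To see $T$ is a bundle, note $X$ is a vector bundle by hypothesis and each summand of $M$ lies in $X^{\perp} \subseteq \coh\X$; if any summand $Z$ of $M$ were of finite length it would lie in some tube $\mc_\infty$, forcing $\mu Z = \infty > \mu X$, contradicting Lemma~\ref{component}. Hence all summands of $M$ are vector bundles and $T$ is a tilting bundle. Part (b) is dual: work in $^{\perp}X$, take a preinjective tilting module $N$ (e.g.\ the slice of indecomposable injectives), apply Lemma~\ref{tilting object from perpendicular category to hereditary category}(2); by the "$\mu Z > \mu X$" case of Lemma~\ref{component} every summand of $N$ has slope strictly larger than $\mu X$, making $X$ the only maximal direct summand, and the same finite-length argument (a finite-length $Z$ would have slope $\infty$, but we need slopes $>\mu X$ which is automatic — instead rule it out because $\Hom$ from $X$, a bundle, to a finite-length sheaf below it... ) — more directly, a finite-length summand lies in the regular part of $^{\perp}X$ and has slope $\mu X$ by Lemma~\ref{component}, contradicting $\mu Z > \mu X$; so $N$ is a bundle and $X \oplus N$ is a tilting bundle.

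For part (c), with the additional hypothesis that the $\tau$-period of $X$ equals $p$, I would combine the two constructions: pick a preprojective tilting module $M$ in $X^{\perp}$ whose summands have slopes $<\mu X$, \emph{but} arrange that $M$ is not a full tilting module — instead take an almost-complete preprojective rigid module $M_0$ in $X^\perp$ with $n-2$ summands all of slope $<\mu X$, together with one indecomposable $Y$ in $^\perp X$ of slope $>\mu X$, so that $T := X \oplus M_0 \oplus Y$ is rigid with $n$ summands. One must check rigidity across the three pieces: $\Ext^1(X, M_0) = 0 = \Ext^1(M_0, X)$ as in (a) (slopes of $M_0$ are $<\mu X$), $\Ext^1(X,Y) = 0 = \Ext^1(Y,X)$ as in (b), and $\Ext^1(M_0, Y) = 0 = \Ext^1(Y, M_0)$ because every summand of $M_0$ has slope $<\mu X < \mu Y$ so by the vanishing $\Hom(\mc_{q'},\mc_q)=0$ for $q<q'$ together with the Serre duality identity $\Ext^1(A,B) = \D\Hom(B,\tau A)$ one of the two Hom-spaces vanishes for degree reasons — here the hypothesis that $X$ (hence the whole slope window around it) behaves like genuine tubular slopes is what lets us invoke Lemma~\ref{cor got by Riemann-Roch theorem} freely. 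Then $T$ is a tilting sheaf with $X$ as a summand and $\mu X$ strictly between $\min$ and $\max$ of the slopes of its summands. Finally, as in (a) and (b), no summand can be finite length (finite-length forces slope $\infty$ for the $M_0$-part or equals $\mu X$ for any regular part), so $T$ is the required tilting bundle. The $\tau$-period hypothesis enters to guarantee (via Lemma~\ref{slop-preserving functor}) that $X^\perp$ and $^\perp X$ are genuinely tame hereditary with both a nontrivial preprojective and preinjective component available, so that $M_0$ and $Y$ with the prescribed slope positions actually exist.

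The main obstacle I expect is the bookkeeping in part (c): ensuring that the rigid object $X \oplus M_0 \oplus Y$ we patch together really has exactly $n$ pairwise non-isomorphic indecomposable summands and is a \emph{tilting} object rather than merely a partial tilting one — the count $|M_0| = n-2$ plus $X$ plus $Y$ gives $n$, but one must verify that $Y$ does not accidentally lie in $X^\perp$ (or in $\add M_0$), which requires choosing $Y$ in $^\perp X$ carefully away from the overlap of the two perpendicular categories. A clean way to finesse this is to first produce the tilting bundle $T_b$ from (b) with $X$ as the only maximal summand, then mutate $T_b$ at one suitably chosen indecomposable summand of slope $>\mu X$, replacing it by something of slope $<\mu X$ using Theorem~\ref{slop change under APR and coAPR mutation} to control where the new slope lands; verifying that such a mutation exists and stays within bundles (Lemma~\ref{APR and co-APR mutations are l mutations} and Lemma~\ref{mutation at quasi-length}) is the technical heart. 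I would present whichever of these two routes — direct patching versus mutation from (b) — yields the shorter verification.
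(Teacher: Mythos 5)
Your constructions for (a) and (b) are the right raw material---the paper's own proof also combines Proposition~\ref{perpendicular cat is tame hereditary}, Lemma~\ref{component} and Lemma~\ref{tilting object from perpendicular category to hereditary category}---but you have attached them to the wrong parts. If $M$ is a preprojective tilting module in $X^{\perp}$, then by Lemma~\ref{component} every summand of $M$ has slope strictly \emph{smaller} than $\mu X$, so in $X\oplus M$ the summand $X$ has the strictly largest slope: it is the only \emph{maximal} direct summand, i.e.\ this proves (b), not (a). Dually, a preinjective tilting module $N$ in $^{\perp}X$ makes $X$ the only \emph{minimal} summand, i.e.\ proves (a). Beyond the relabelling, there is a real gap in your argument that $N$ is a vector bundle: you assert that a finite-length summand would lie in the regular part of $^{\perp}X$, but a finite-length sheaf has slope $\infty>\mu X$ and therefore lies in the \emph{preinjective} component by Lemma~\ref{component}(3), so a preinjective tilting module in $^{\perp}X$ can perfectly well contain torsion summands. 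The paper handles this by noting that only finitely many indecomposables of $^{\perp}X$ lie in $\mc_{\infty}$ and choosing the preinjective tilting module far enough out to avoid them; some such argument is required.

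Part (c) is where the proposal genuinely fails. In the patching $T=X\oplus M_0\oplus Y$ the vanishing $\Ext^1_{\X}(Y,M_0)\cong\D\Hom_{\X}(M_0,\tau Y)$ is \emph{not} ``for degree reasons'': since $\mu M_0<\mu(\tau Y)=\mu Y$, Lemma~\ref{cor got by Riemann-Roch  theorem} says $\Hom_{\X}(\mc_{\mu M_0},\mc_{\mu Y})\neq 0$, so this Ext-group has no reason to vanish and rigidity of $T$ is not established. Your fallback (mutating the bundle from (b) at an interior summand and steering the new slope with Theorem~\ref{slop change under APR and coAPR mutation}) is also unavailable, since that theorem only controls mutations at first, last, or slope-extremal summands. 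The paper's route to (c) is entirely different: if $X$ is a line bundle $\co(\vec{x})$ one simply takes the twisted canonical bundle $T_{can}(\vec{x}-\vec{x}_t)$; otherwise one writes $\mu X=m+\frac{a}{b}$, uses Lemma~\ref{exist of abcd} to produce a matrix in $\SL(2,\Z)$, realizes the corresponding fractional linear transformation as an autoequivalence $\overline{\varphi}$ of $\md^b(\X)$ with $\overline{\varphi}(\co)=X$ (Lemmas~\ref{LM2} and~\ref{slop-preserving functor}---this is where the $\tau$-period~$p$ hypothesis is actually used, not to guarantee the structure of $X^{\perp}$), and checks via Lemma~\ref{mor under automorphism} that $\overline{\varphi}(T_{can}(-\vec{x}_t))$ lies in $\coh\X$ and has $X$ strictly inside its slope range.
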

\begin{proof}
For $(a)$, note that $^{\perp}X$  is the module category of a  tame hereditary algebra.  By Lemma~\ref{component},  it is easy to get that there are only finite indecomposable objects lying in $^{\perp}X \cap \mc_{\infty}$.  Hence, we can choose a preinjective tilting module  $M_1$ in $^{\perp}X$ such that $ M_1$ is a vector bundle in $\coh\X$. Then  by Lemma~\ref{component}  and Lemma~\ref{tilting object from perpendicular category to hereditary category},  $X\oplus M_1$ is a tilting bundle in $\coh\X$ such that $X$ is  the only minimal direct summand of this tilting bundle.

 For $(b)$,  consider $X^{\perp}$, then the proof is similar to $(a)$.

For $(c)$, we will discuss it in the following two cases.

 Case 1: $X$ is a line bundle $\co(\vec{x})$ for some $\vec{x}\in L(\bf p)$. Let $T=T_{can}(\vec{x}-\vec{x}_t)$,  then $T$ is a tilting bundle, $\co(\vec{x}-\vec{x}_t),\co(\vec{x}+\vec{x}_t)$ are direct summands of $T$ and   $\mu (\co(\vec{x}-\vec{x}_t))<\mu( \co(\vec{x}))<\mu (\co(\vec{c}+\vec{x}+\vec{x}_t))$, hence $T_{can}(\vec{x}-\vec{x}_t)$ is the tilting bundle we needed.

Case 2: $X$ is not a line bundle. Since each line bundle in $\coh\X$ has integer slope, we can suppose that $\mu X= m+\frac{a}{b}$ for some  integers $m,a,b$ such that $(a,b)=1$ and $0<a<b$. Then there are positive integers $c,d$ such that $bc-ad=1,0<c\leq d< b, c\leq a$. Let $\varphi(x)=\frac{(c+dm)x+(a+bm)}{dx+b}$.  By Lemma~\ref{LM2},  there is an automorphism of $\md^b({\X})$, denoted by $\overline{\varphi}$,  such that $\overline{\varphi}$ coincides with the action of   $\varphi$ on slopes. Since  $X$ is  a quasi-simple  object in $\coh\X$ with $\tau$-period  $p$,  we may assume that $\overline{\varphi}(\co)=X$ by Lemma~\ref{slop-preserving functor}.  Since $T_{can}(-\vec{x}_t)$ is a tilting object in $\coh\X$, $\overline{\varphi}(T_{can}(-\vec{x}_t))$ is a tilting object in $\md^b(\X)$.
By $$\varphi(\mu (\co(-\vec{x}_t)))=\varphi(-1)=m+\frac{a-c}{b-d} < m+\frac{c}{d}=\varphi(\infty)$$ 
and  $\co(-\vec{x}_t)$ is a minimal direct summand  of $T_{can}(-\vec{x}_t)$,
by Lemma~\ref{mor under automorphism}, all direct summands of $\overline{\varphi}(T_{can}(-\vec{x}_t))$ have slopes smaller than $\varphi(\infty)=m+\frac{c}{d}$,
hence  $\overline{\varphi}(T_{can}(-\vec{x}_t))$ is a tilting object in $\coh\X$, moreover, a tilting bundle in $\coh\X$. As $\co$ is a direct summand of $T_{can}(-\vec{x}_t)$,  $X=\overline{\varphi}(\co)$ is a direct summand of  $\overline{\varphi}(T_{can}(-\vec{x}_t))$. Hence, the tilting bundle $\overline{\varphi}(T_{can}(-\vec{x}_t))$ is what we wanted. 
\end{proof}

\subsection{Connectedness in some special cases}

\begin{lem}\label{case common object which is the last object}
Let $T,T'$ be two tilting bundles in $\coh\X$ such that $T$ and $T'$ have a  common quasi-simple  direct summand $X$.
  Then $T$ can be transformed into $T'$ by bundle-mutations if $X$ {satisfies} one of the following two conditions:
\begin{itemize}
\item[(1)]$X$ is the only minimal  direct summand of $T$ and $T'$.
\item[(2)] $X$ is  the only maximal  direct summand of $T$ and $T'$.
\end{itemize}
\end{lem}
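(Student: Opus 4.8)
The plan is to reduce both cases to a statement about tilting modules in a perpendicular category, where Lemma~\ref{APR mutation on preprojective tilting module}(c) already gives connectedness via $\APR$ and co-$\APR$ mutations. I will treat case~(1) in detail; case~(2) is dual. Since $X$ is a common quasi-simple direct summand of $T$ and $T'$, write $T = X \oplus \overline{T}$ and $T' = X \oplus \overline{T}'$. The first step is to observe that $\overline{T}$ and $\overline{T}'$ are tilting objects in the perpendicular category $^{\perp}X$: indeed $\Ext^1_{\X}(X,\overline{T}) = 0$ and $\Ext^1_{\X}(\overline{T},X)=0$ because $X\oplus\overline{T}$ is rigid, and $\Hom_{\X}(X,\overline{T})=0$ follows from the hypothesis that $X$ is the \emph{only} minimal direct summand of $T$ together with Lemma~\ref{cor got by Riemann-Roch theorem} applied to the strictly smaller slope of $X$ (so each summand of $\overline T$ has strictly larger slope and maps out of nothing of smaller slope, while $X$ cannot map to it nontrivially as $\mu X$ is strictly minimal and $\coh\X$ is semistable). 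Hence $\overline{T},\overline{T}'\in {}^{\perp}X$, and by Proposition~\ref{perpendicular cat is tame hereditary} the category $^{\perp}X$ is $\mod A$ for a connected tame hereditary algebra $A$.

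The second step is to identify \emph{where} $\overline T$ and $\overline T'$ sit inside $\mod A$. Because $X$ is the only minimal direct summand of both $T$ and $T'$, every indecomposable summand of $\overline T$ (and of $\overline T'$) has slope $> \mu X$; by Lemma~\ref{component} this means each such summand lies in the \emph{preinjective} component of $^{\perp}X$. Therefore $\overline T$ and $\overline T'$ are both \emph{preinjective tilting modules} in $^{\perp}X \cong \mod A$. By Lemma~\ref{APR mutation on preprojective tilting module}(c), $\overline T$ can be transformed into $\overline T'$ by a sequence of $\APR$ and co-$\APR$ mutations inside $\mod A$. Finally, by Lemma~\ref{pp-mutation is mutation of h}(2) together with Lemma~\ref{tilting object from perpendicular category to hereditary category}(2), each such co-$\APR$ mutation in $^{\perp}X$ lifts to a co-$\APR$ mutation of the tilting bundle $X\oplus(\cdot)$ in $\coh\X$; one must also check that the $\APR$ mutations in $^{\perp}X$ lift, which follows from the same two lemmas since throughout the process the intermediate tilting module stays preinjective in $^{\perp}X$ (Lemma~\ref{APR mutation on preprojective tilting module}(a)), so the lifted object $X\oplus(\cdot)$ stays a tilting bundle with $X$ still its only minimal summand. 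Chaining these lifted mutations transforms $T = X\oplus\overline T$ into $T' = X\oplus\overline T'$ through tilting bundles, which is exactly a sequence of bundle-mutations.

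Case~(2) is completely dual: one uses the \emph{right} perpendicular category $X^{\perp}$ instead, notes that $\overline T,\overline T'$ are now \emph{preprojective} tilting modules there (all their summands have slope $<\mu X$, so by Lemma~\ref{component} they lie in the preprojective component), and applies Lemma~\ref{APR and co-APR mutations are bundle mutations} for the $\APR$/co-$\APR$ mutations on preprojective tilting modules in $X^{\perp}$.

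The main obstacle I anticipate is not the connectedness inside the tame hereditary category—that is handed to us by Lemma~\ref{APR mutation on preprojective tilting module}—but verifying that the entire mutation sequence produced there lifts to \emph{bundle}-mutations in $\coh\X$, i.e.\ that no intermediate step accidentally introduces a finite-length summand or destroys the role of $X$ as the unique extremal summand. This is where one must lean on the fact, built into Lemma~\ref{APR mutation on preprojective tilting module}(a), that $\APR$/co-$\APR$ mutations preserve the preprojective (resp.\ preinjective) class, so that every intermediate $\overline M$ still consists of summands with slope on one fixed side of $\mu X$; combined with Lemma~\ref{tilting object from perpendicular category to hereditary category} this keeps $X\oplus\overline M$ a tilting bundle with $X$ extremal at every stage, making each lifted step a genuine bundle-mutation.
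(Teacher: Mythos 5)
Your overall strategy is the same as the paper's: split off $X$, place $\overline T$ and $\overline{T'}$ in the preinjective (resp.\ preprojective) component of the perpendicular category, connect them there by $\APR$/co-$\APR$ mutations, and lift back to $\coh\X$. Before the main point, one small correction: membership of $\overline T$ in $^{\perp}X$ requires $\Hom_{\X}(\overline T,X)=0$, not $\Hom_{\X}(X,\overline T)=0$; the latter is typically \emph{nonzero}, since by Lemma~\ref{cor got by Riemann-Roch  theorem} nonzero maps go from smaller slope to larger slope. The vanishing you actually need does hold, because every summand of $\overline T$ has slope strictly greater than $\mu X$.

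The substantive gap is in case (1). You invoke Lemma~\ref{APR mutation on preprojective tilting module}(c), which produces a sequence containing both $\APR$ and co-$\APR$ mutations inside $^{\perp}X$, and you claim the $\APR$ steps lift to bundle-mutations ``by the same two lemmas.'' They do not: Lemma~\ref{pp-mutation is mutation of h} and Lemma~\ref{APR and co-APR mutations are bundle mutations} deliberately cover only the co-$\APR$ mutation on preinjective tilting modules in $^{\perp}X$. The reason is that the preinjective component of $^{\perp}X$ contains the (finitely many) indecomposables of $\mc_{\infty}\cap{}^{\perp}X$: being preinjective in $^{\perp}X$ only forces slope $>\mu X$, and that includes slope $\infty$. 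Moreover, a first object $N_k$ of $\overline M$ in $^{\perp}X$ need not be a first object of $X\oplus\overline M$ in $\coh\X$ (since $\Hom_{\X}(X,N_k)$ may be nonzero), so Theorem~\ref{slop change under APR and coAPR mutation}(2) gives no upper bound on the slope of the new complement; the cokernel of the approximation $N_k\to B$ is a quotient of a vector bundle and may well be a torsion sheaf. Preservation of preinjectivity, Lemma~\ref{APR mutation on preprojective tilting module}(a), does not exclude this, so your intermediate objects are not shown to be tilting \emph{bundles}. The paper circumvents this by using co-$\APR$ mutations only: by part (b) of Lemma~\ref{APR mutation on preprojective tilting module} it moves both $\overline T$ and $\overline{T'}$ to slices $\Sigma$ and $\Sigma'$, then pushes both, again by co-$\APR$ mutations, to a common slice $\Sigma''$ with $\Fac\Sigma\subset\Fac\Sigma''$ and $\Fac\Sigma'\subset\Fac\Sigma''$; every edge traversed is a co-$\APR$ step (hence a bundle-mutation by Lemma~\ref{APR and co-APR mutations are bundle mutations}), and the path from $T$ to $T'$ is obtained by reversing half of them, which is harmless since reverses of bundle-mutations are bundle-mutations. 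Your case (2) is fine as written, because every preprojective object of $X^{\perp}$ has slope $<\mu X<\infty$ and is automatically a vector bundle, which is exactly why Lemma~\ref{APR and co-APR mutations are bundle mutations} covers both mutation types on that side. You should repair case (1) by replacing the appeal to part (c) with the slice argument.
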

\begin{proof}
We give a proof for the condition $(1)$ and the proof for the condition $(2)$ is similar.

 Denote by $T=X\oplus \overline{T}$ and $T'=X\oplus \overline{T'}$. According to Lemma~\ref{component},  both $\overline{T}$ and $\overline{T'}$   lie on the preinjective  component of $^{\perp} X$.  According to Lemma~\ref{APR mutation on preprojective tilting module}, by co-$\APR$ mutations, $T$ can be transformed into a slice $\Sigma$, $T'$ also can be transformed into a slice $\Sigma'$.
Choose a slice $\Sigma''$ on the preinjective  component such that $\Fac \Sigma'\subset \Fac \Sigma''$ and $\Fac \Sigma\subset \Fac \Sigma''$. Then  by  co-$\APR$ mutations, $\Sigma$  and $\Sigma'$ can be transformed into  $\Sigma''$.
By Lemma~\ref{APR and co-APR mutations are bundle mutations},  all the mutation appeared above are bundle-mutations, hence, $T$ and $T'$ can be transformed into each other by bundle-mutations.
\end{proof}

\begin{lem}\label{case two last object in different tube}
Let $T$ and $T'$ be two tilting bundles  in $\coh\X$. Let $X$ and $ X'$ be two  quasi-simple  objects  with same slope but belong to different tubes in $\coh\X$. If $X$ and $X'$ satisfy  one of the following two conditions:
\begin{itemize}
\item[(1)]   $X$ is the only minimal  direct summand of $T$ and  $X'$ is the only minimal  direct summand of $T'$,
\item[(2)]$X$ is the only maximal  direct summand of $T$ and $X'$ is the only maximal  direct summand of $T'$;
\end{itemize}
 then $T$ can be transformed into $T'$ by bundle-mutations.
\end{lem}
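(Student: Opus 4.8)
\textbf{Proof proposal for Lemma~\ref{case two last object in different tube}.}
The plan is to reduce the statement to Lemma~\ref{case common object which is the last object}, which handles the case where the two tilting bundles share a common extremal quasi-simple summand. The only obstacle is that here $T$ and $T'$ have \emph{different} extremal summands $X$ and $X'$ (lying in different tubes but with the same slope), so I must first build a chain of bundle-mutations that replaces $X$ by $X'$ while staying among tilting bundles. I will treat condition~$(1)$ (the only minimal summand case) and note that $(2)$ is dual.

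First I would invoke Lemma~\ref{slop-preserving functor}: since $X$ and $X'$ are quasi-simple rigid vector bundles with the same slope, provided they also have the same $\tau$-period there is a slope-preserving automorphism $F$ of $\md^b(\X)$ with $F(X)=X'$; if their $\tau$-periods differ I would instead argue via the perpendicular categories, using that $\mc_{\mu X}$ contains quasi-simple objects of every relevant $\tau$-period and choosing auxiliary objects accordingly. The key point is to produce from $X$ a tilting bundle $T_1$ in which $X$ is still the only minimal summand and which, after applying a suitable slope-preserving automorphism, becomes a tilting bundle $T_1'$ with $X'$ as the only minimal summand; here one uses Lemma~\ref{tilting bundle passing X}(a) to know such tilting bundles exist, and Lemma~\ref{slope-preserving automorphism} together with Lemma~\ref{mor under automorphism} to guarantee that a slope-preserving automorphism sends vector bundles to vector bundles and preserves the ``only minimal summand'' property (slopes are literally unchanged).

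Then the argument assembles as follows. By Lemma~\ref{tilting bundle passing X}(a) choose a tilting bundle $T_1$ with $X$ as its only minimal direct summand, and a tilting bundle $T_1'$ with $X'$ as its only minimal direct summand, arranged so that $T_1'=F(T_1)$ for a slope-preserving automorphism $F$ with $F(X)=X'$ (this is where the bulk of the work sits: one needs the $^\perp X$-side tilting module used to build $T_1$ to be carried by $F$ to a legitimate vector-bundle tilting module in $^\perp X'$, which follows from $F$ being slope-preserving and Lemma~\ref{component} identifying components by slope comparison). Now apply Lemma~\ref{case common object which is the last object}(1): $T$ and $T_1$ share the common only-minimal quasi-simple summand $X$, hence $T$ is connected to $T_1$ by bundle-mutations; likewise $T'$ and $T_1'$ are connected by bundle-mutations. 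Finally $T_1$ and $T_1'=F(T_1)$ are connected: applying $F$ to the chain of co-$\APR$ mutations joining $T_1$ to a preinjective slice $\Sigma$ in $^\perp X$ produces, by Lemma~\ref{APR mutation on preprojective tilting module} and Lemma~\ref{APR and co-APR mutations are bundle mutations}, a chain of bundle-mutations joining $T_1'$ to the slice $F(\Sigma)$ in $^\perp X'$; choosing the common dominating slice as in Lemma~\ref{case common object which is the last object} then links $\Sigma$ and $F(\Sigma)$ after transporting everything into a single perpendicular category. Concatenating, $T \leadsto T_1 \leadsto T_1' \leadsto T'$ by bundle-mutations, which is the claim.

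I expect the main obstacle to be the bookkeeping in the middle step: verifying that the slope-preserving automorphism $F$ carrying $X$ to $X'$ can be chosen so that it simultaneously sends an \emph{explicitly chosen} vector-bundle tilting module over $^\perp X$ to a vector-bundle tilting module over $^\perp X'$ — i.e. that no indecomposable summand is pushed into $\mc_\infty$. This is handled by noting that $F$ preserves slopes exactly, so a summand of slope $<\mu X$ (hence a vector bundle, being in the preprojective/preinjective part by Lemma~\ref{component}) stays at the same slope and remains a vector bundle; but one must also rule out finite-length objects of slope $\ne \mu X$, which cannot occur since all of $\mc_\infty$ has slope $\infty$. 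Once this is pinned down the rest is a routine concatenation of the already-established connectedness lemmas.
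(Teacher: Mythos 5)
There is a genuine gap, and it sits exactly at the step you yourself flag as ``the bulk of the work.'' Your plan is to connect $T_1$ (with $X$ as only minimal summand) to $T_1'=F(T_1)$ (with $X'$ as only minimal summand) by pushing a chain of co-$\APR$ mutations through the automorphism $F$. But applying $F$ to a chain joining $T_1$ to a slice-type tilting bundle $X\oplus\Sigma$ in ${}^{\perp}X$ only produces a chain joining $T_1'$ to $X'\oplus F(\Sigma)$; it does nothing toward joining $T_1$ to $T_1'$. The remaining task --- connecting $X\oplus\Sigma$ to $X'\oplus F(\Sigma)$ --- is precisely an instance of the lemma being proved (two tilting bundles whose only minimal summands are $X$ and $X'$, same slope, different tubes), and the ``common dominating slice'' device of Lemma~\ref{case common object which is the last object} is unavailable here because $\Sigma$ and $F(\Sigma)$ live in the two \emph{different} perpendicular categories ${}^{\perp}X$ and ${}^{\perp}X'$; there is no single preinjective component containing both. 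So the argument is circular at its crucial point. A secondary problem: Lemma~\ref{slop-preserving functor} produces $F$ with $F(X)=X'$ only when $X$ and $X'$ have the same $\tau$-period, and since they are rigid quasi-simples in different tubes (of ranks among $p_1,\dots,p_t$) their periods typically differ, e.g.\ for weight types $(2,4,4)$ and $(2,3,6)$; your fallback for that case is only gestured at.

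The paper supplies exactly the missing bridge between the two tubes. Because $\mu X=\mu X'$ and the tubes are different and orthogonal, $X\oplus X'$ is rigid; working in the preinjective part of ${}^{\perp_{\mh'}}X'$ inside $\mh'={}^{\perp}X$ (again a tame hereditary module category), one constructs a single tilting bundle $T''=X\oplus X'\oplus T_v$ whose only minimal direct summands are $X$ and $X'$. Mutating $T''$ at $X'$ (resp.\ at $X$) gives, by Theorem~\ref{slop change under APR and coAPR mutation} and Lemma~\ref{mutation at quasi-length}, tilting bundles $M_1$ and $M_2$ having $X$ (resp.\ $X'$) as \emph{only} minimal summand, and $M_1$, $M_2$ are linked through $T''$ by two bundle-mutations. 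Lemma~\ref{case common object which is the last object} then connects $T$ to $M_1$ and $T'$ to $M_2$. You should replace your automorphism step with this construction (or an equivalent device that genuinely crosses between the two perpendicular categories); the rest of your outline then goes through.
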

\begin{proof} We give a proof of for the condition $(1)$ and omit the proof for the condition $(2)$.

Denote by $\mh'=^{\perp}X$.
Note that, as $\mu X=\mu X'$ and $X, X'$ belong to different tubes, we have $\Ext^1_{\X}(X',X)=\Hom_{\X}(X',X)=0$. Consequently,   $X'\in\mh'$ is a  quasi-simple regular module in $\mh'$ by Lemma~\ref{component}. It is known that
 the left perpendicular category $^{\perp_{\mh'}}X'$ of $X'$ in $\mh'$ is also a module category of a tame hereditary algebra ({\it cf.}~\cite{SS2}).  Since in $\coh\X$, there are only finite indecomposable objects  lying in $\mc_{\infty}\cap ^{\perp}X$,  we can choose a tilting module $T_v$ in the preinjective component of $^{\perp_{\mh'}}X'$  such that $T_v$ is a vector bundle in $\coh\X$. It is easy to get that  $T''=X\oplus X'\oplus T_v$ is a tilting bundle in $\coh\X$ such that  $X$ and $X'$ are minimal direct summands of $T''$.
 Moreover, $T''$ admits no mininal direct summands other than $X$ and $X'$.
   Let $M_1=\mu_{X'}(T'')$, $M_2=\mu_{X}(T'')$. By Theorem~\ref{slop change under APR and coAPR mutation} and Lemma~\ref{mutation at quasi-length}, $M_1$ and $M_2$ are also tilting bundles,
and  $X$ is the only minimal  direct summand of $M_1$, $X'$ is the only minimal  direct summand of $M_2$.   By Lemma~\ref{case common object which is the last object},
 $T$ can be obtained from $M_1$ by bundle-mutations,  $T'$ can be obtained from $M_2$ by bundle-mutations. Hence $T$ can be obtained from $T'$ by bundle-mutations.
\end{proof}

\begin{lem}\label{last object to first object}
Let $T$ be a  tilting bundle in $\coh\X$ with a quasi-simple direct summand $X$, then
 \begin{itemize}
\item[(1)]  $T$ can be transformed into a  tilting bundle  such that  $X$ is the only minimal  direct summand of  the new tilting bundle by  bundle-mutations.
\item[(2)] $T$ can be transformed into a  tilting bundle   such that  $X$ is the only maximal  direct summand of the new tilting bundle by bundle-mutations.
 \end{itemize}
\end{lem}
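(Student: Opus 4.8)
The plan is to prove (1) and deduce (2) by a dual argument (or by applying (1) in the opposite category $\coh\X^{\op}$, which is again the category of coherent sheaves on a tubular weighted projective line). For (1), the strategy is to produce, from $T$, a "target" tilting bundle $T_0$ in which $X$ is the only minimal direct summand, and then connect $T$ to $T_0$ by bundle-mutations. There are two natural cases according to the $\tau$-period of $X$. Both rely on the telescopic/automorphism machinery of Section~\ref{properties of tubular type} together with the bundle-mutation results (Lemma~\ref{APR and co-APR mutations are l mutations}, Lemma~\ref{APR and co-APR mutations are bundle mutations}).

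First I would handle the case that the $\tau$-period of $X$ equals $p$. By Lemma~\ref{tilting bundle passing X}(a) there is a tilting bundle $T_0$ having $X$ as its only minimal direct summand. Write $T = X\oplus\overline T$ and $T_0 = X\oplus\overline{T_0}$; both $\overline T$ and $\overline{T_0}$ are tilting modules in the right perpendicular category $X^{\perp}$ in the sense that $X\oplus\overline T$, $X\oplus\overline{T_0}$ are tilting objects — however $\overline T$ need not be preinjective in $X^{\perp}$, indeed $\overline T$ may have summands of slope both below and above $\mu X$, so $X$ need not be extremal in $T$ to begin with. The key reduction is therefore: first use bundle-mutations to move $T$ to a tilting bundle in which $X$ \emph{is} a minimal (not yet only-minimal) direct summand. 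This is where I expect the main obstacle to lie. The idea is to apply a suitable automorphism $\overline\varphi$ of $\md^b(\X)$ coming from $\SL(2,\Z)$ (Lemma~\ref{LM2}) that fixes $X$ up to isomorphism (possible by Lemma~\ref{slop-preserving functor} after first normalizing, since $X$ has $\tau$-period $p$), chosen so that the slope of every summand of $\overline\varphi(\overline T)$ is pushed strictly above $\mu X$; then $\overline\varphi(T)$ is again a tilting bundle (one must check via Lemma~\ref{mor under automorphism} that no summand crosses into $\coh\X[\pm1]$, using that $\mu X$ can be taken, after the normalization, to a point where the relevant fractional linear map is monotone on the finite set of slopes occurring in $\overline T$), and in $\overline\varphi(T)$ the summand $X$ is the only minimal direct summand. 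But $\overline\varphi$ itself is not a mutation, so this only tells us what the target looks like; we still need to connect $T$ to $\overline\varphi(T)$ by bundle-mutations, which brings us back to the same difficulty. I would instead argue directly inside $X^{\perp}$: by Lemma~\ref{component}, a summand of $\overline T$ of slope $<\mu X$ lies in the preprojective component of $X^{\perp}$, one of slope $\mu X$ in a regular component, one of slope $>\mu X$ in the preinjective component. Using $\APR$ mutations inside $X^{\perp}$ (which are bundle-mutations in $\coh\X$ by Lemma~\ref{APR and co-APR mutations are bundle mutations}) one transforms $\overline T$ through tilting modules of $X^{\perp}$; the content of Lemma~\ref{APR mutation on preprojective tilting module} guarantees that a preprojective tilting module can be driven to a preprojective slice and a preinjective one to a preinjective slice, and more generally that one can reach a preinjective tilting module from any given tilting module of $X^{\perp}$ by a finite sequence of $\APR$/co-$\APR$ mutations (this is the standard fact that the tilting graph of a tame hereditary algebra is connected, with the preinjective tilting modules forming a "cone" reachable from anywhere). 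Once $\overline T$ has been transformed into a preinjective tilting module $\overline{T_1}$ of $X^{\perp}$, every summand of $\overline{T_1}$ has slope $>\mu X$, so $X\oplus\overline{T_1}$ has $X$ as its only minimal direct summand, and all intervening mutations were bundle-mutations.

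Second I would handle the case that the $\tau$-period $d$ of $X$ is $<p$ (so $X$ lies in a tube of rank $r$ with $q.l.X \le r-1$ and $d = r$). Here $X$ is not sincere in the ambient sense, but the same perpendicular-category argument applies verbatim: $X^{\perp}$ is still the module category of a connected tame hereditary algebra by Proposition~\ref{perpendicular cat is tame hereditary}, Lemma~\ref{component} still classifies the components by slope comparison with $\mu X$, and Lemma~\ref{tilting bundle passing X}(a) still provides a tilting bundle with $X$ the only minimal summand (its proof does not use period $p$). So the period-$p$ hypothesis is not actually needed for this lemma, and I would simply run the argument of the previous paragraph in all cases: write $T = X\oplus\overline T$, view $\overline T$ as a tilting module of $X^{\perp}$, and use the connectedness of the tilting graph of the tame hereditary algebra $X^{\perp}$ — driving $\overline T$ to a preinjective tilting module by $\APR$ and co-$\APR$ mutations — noting that every such mutation is a bundle-mutation in $\coh\X$ by Lemma~\ref{APR and co-APR mutations are bundle mutations} and Lemma~\ref{APR and co-APR mutations are l mutations}, and that we must additionally check each intermediate tilting object $X\oplus\overline T'$ stays a \emph{bundle}; this last point follows because the summands crossing to slope $\le \mu X$ during the process lie in the preprojective component of $X^{\perp}$, hence (being rigid indecomposables of slope $<\mu X$ in $X^\perp$, or quasi-simple regular of slope $\mu X$) are vector bundles, as a summand of finite length would lie in $\mc_\infty$ and have slope $\infty$, contradicting $\mu X \le \mu(\text{summand})$ being finite along the way — care is needed here, and this verification is the genuinely delicate bookkeeping step. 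For (2), I would either dualize the whole argument using $^{\perp}X$ and co-$\APR$ mutations and Lemma~\ref{tilting bundle passing X}(b), or observe that $\coh\X^{\op}\simeq\coh\X'$ for a tubular $\X'$ with the roles of minimal and maximal interchanged, and quote (1).

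The main obstacle, to reiterate, is the reduction "from an arbitrary summand $X$ of $T$ to $X$ being extremal": controlling the passage of summands of $\overline T$ across the slope value $\mu X$ during mutations, and ensuring the bundle property is preserved throughout. Everything else is an application of the connectedness of tilting graphs of tame hereditary algebras (Lemma~\ref{APR mutation on preprojective tilting module}) transported into $\coh\X$ through the perpendicular category $X^\perp$ (Proposition~\ref{perpendicular cat is tame hereditary}, Lemma~\ref{component}, Lemma~\ref{tilting object from perpendicular category to hereditary category}, Lemma~\ref{pp-mutation is mutation of h}).
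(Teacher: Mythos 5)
You have correctly isolated the main difficulty---transforming $T$ so that $X$ becomes extremal---but the mechanism you propose for it does not work. Your plan rests on viewing $\overline{T}$ (where $T=X\oplus\overline{T}$) as a tilting module of the perpendicular category $X^{\perp}$ and driving it to a preinjective tilting module of $X^{\perp}$ by mutations performed inside $X^{\perp}$. However, $\overline{T}$ does not lie in $X^{\perp}$ in general: rigidity of $T$ gives $\Ext^1_{\X}(X,\overline{T})=0$, but membership in $X^{\perp}$ also requires $\Hom_{\X}(X,\overline{T})=0$, which fails whenever a summand of $\overline{T}$ of slope $\ge\mu X$ receives a nonzero map from $X$ (for instance $T_{can}=\co\oplus\overline{T}$ with $X=\co$ has $\Hom_{\X}(\co,\co(\vec{x}))\neq 0$ for all $0<\vec{x}\leq\vec{c}$). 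So the intended walk through the tilting graph of the tame hereditary algebra $X^{\perp}$ has no starting point. Moreover, even for complements that do lie in $X^{\perp}$, the paper only proves that \emph{preprojective} tilting modules $M$ of $X^{\perp}$ yield tilting objects $X\oplus M$ of $\coh\X$ (Lemma~\ref{tilting object from perpendicular category to hereditary category}) and that APR mutations on those induce mutations of $\coh\X$ (Lemma~\ref{pp-mutation is mutation of h}); for a general tilting module of $X^{\perp}$ with regular or preinjective summands, $X\oplus M$ need not be rigid in $\coh\X$, and a mutation computed with approximations taken in $X^{\perp}$ need not coincide with a mutation of $X\oplus M$ in $\coh\X$. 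Your appeal to the connectedness of the tilting graph of $X^{\perp}$ therefore cannot be transported into $\coh\X$ by the lemmas available.

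The paper proceeds differently at exactly this point. It fixes a maximal-slope summand $T_{max}$ of $T$ and works in $T_{max}^{\perp}$, where $X$ \emph{does} lie when $\mu X<\mu T_{max}$ and is preprojective there by Lemma~\ref{component}; the finiteness of the double perpendicular $X^{\perp_{\mh'}}$ bounds the set of possible summands of slope $<\mu X$, so APR mutations performed directly in $\coh\X$ at first objects (whose effect on slopes is controlled by Theorem~\ref{slop change under APR and coAPR mutation} and Lemma~\ref{mutation at quasi-length}) terminate with $X$ a minimal summand, after which mutating the remaining minimal summands makes it the only one. Finally, parts (1) and (2) are not related by a formal dualization when $\mu X$ is already extremal in $T$: if $\mu X=\mu T_{min}$, the above gives (1), but (2) requires the detour through a tilting bundle in which $\mu X$ is interior (Lemma~\ref{tilting bundle passing X}(c), available only when $X$ has $\tau$-period $p$) or through an auxiliary quasi-simple $X'$ of period $p$ in another tube together with Lemma~\ref{case two last object in different tube}. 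This case analysis is absent from your proposal and cannot be skipped.
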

\begin{proof}
Let $T_{min}$ be a minimal direct summand of $T$, $T_{max}$ be a maximal direct summand of $T$. Note that $\mu (T_{min})\neq\mu (T_{max})$.

Firstly, we claim that if  $\mu X<\mu (T_{max})$, then $(1)$ holds and  if $\mu X>\mu (T_{min})$, then $(2)$ holds.

Now suppose that  $\mu X<\mu (T_{max})$. Consider the right perpendicular category $T_{max}^{\perp}$ of  $T_{max}$ in $\coh\X$. Denote by $\mh'=T_{max}^{\perp}$. $\mh'$ is the module category of a tame hereditary algebra.
Since  $\mu X<\mu (T_{max})$,  by Lemma~\ref{component}, $X$ lies in the preprojective component of $\mh'$.  Then it is easy to  get that the right perpendicular category  $X^{\perp_{\mh'}}$ of $X$ in $\mh'$ is a hereditary  category of finite type. So the set $\{N\in\ind\X|\mu N< \mu X,\Ext_{\X}^1(X,N)=0=\Ext^1_{\X}(T_{max},N)\}\subset X^{\perp_{\mh'}}$ is finite.
 Hence by $\APR$ mutations,
$T$ can be transformed into a new tilting bundle $L$ {such that} $X$ is a minimal direct summand of $L$. If $X$ is the only minimal direct summand, then we are done. Else  suppose that $\{L_i|1\leq i\leq s\}$ are all minimal direct summands of $L$. Assume moreover that $q.l. L_i\leq q.l. L_{i+1}$ for $1\leq i\leq s-1$. Since $X$ is quasi-simple, we may assume that $L_1=X$. Then let $T'=\mu_{L_{2}}\cdots\mu_{L_s}(L)$, by Theorem~\ref{slop change under APR and coAPR mutation} and Lemma~\ref{mutation at quasi-length},  $X$ is the only minimal  direct summand of $T'$. By Lemma~\ref{APR and co-APR mutations are bundle mutations} and Theorem~\ref{slop change under APR and coAPR mutation},  all the mutations appeared above are bundle-mutations. Hence, $T$ can be transformed into a  tilting bundle $T'$ such that $X$ is the only minimal  direct summand of $T'$ by  bundle-mutations.

For the case $\mu X>\mu (T_{min})$, we may apply a similar proof and we omit it here.

Now we separate the remaining proof into three cases.

Case 1: $\mu (T_{min})<\mu X<\mu (T_{max})$. It follows from the claim directly.


Case 2:   $\mu X=\mu (T_{min})$. Note that, as $\mu (T_{min})\neq \mu (T_{max})$, we have $\mu X<\mu (T_{max})$. It follows from the above claim that $T$ can be  transformed into a tilting bundle $M$ such that $X$ is the only minimal  direct summand of $M$ by bundle-mutations. In order to prove $(2)$, it suffices to prove that $M$ can be transformed into a  tilting bundle such that $X$ is the only maximal  direct summand of the new tilting bundle by bundle-mutations.
 We will discuss it in the following two cases.  

Subcase 2.1:  the $\tau$-period of $X$ is $p$.  By Lemma~\ref{tilting bundle passing X}, there is a tilting bundle $T''$ such that $X$ is a direct summand of $T''$ with $\mu X$ is neither minimal  nor maximal among the slopes of all the direct summands of  $T''$.
By Case 1,  $T''$ can be transformed into a  tilting bundle $M'$ such that  $X$ is the only minimal  direct summand of $M'$ by bundle-mutations and also can be transformed into a tilting bundle $M''$ such that $X$ is the only maximal  direct summand of $M''$ by bundle-mutations.  Now $X$ is the only minimal direct summand of $M$ and $M'$,  by Lemma~\ref{case common object which is the last object}, $M$ can be transformed into  $M'$ by bundle-mutations. In sum, by bundle-mutations, $M$ can be transformed into $M''$ such that  $X$ is the only maximal  direct summand of $M''$.

Subcase 2.2:  the $\tau$-period of $X$ is not $p$.  Choose a quasi-simple rigid object $X'\in\mc_{\mu X}$ such that the  $\tau$-period of $X'$ is $p$. Then we have $X$ and $X'$ belong to different tubes. Let $L$ be  a tilting bundle such that $X'$ is the only minimal  direct summand of $L$.  By Lemma~\ref{case two last object in different tube}, we know that $L$ can be obtained from $M$ by bundle-mutations.
By Subcase 2.1, $L$ can be transformed into a tilting bundle $L'$ such that $X'$ is the only maximal  direct summand of $L'$ by bundle-mutations. Choose a tilting bundle $T'$ such that $X$  is the only maximal  direct summand of $T'$, then again by  Lemma~\ref{case two last object in different tube}, we know that $T'$ can be obtained from $L'$ by bundle-mutations.
In sum, by bundle-mutations, $M$ can be transformed into $T'$ such that  $X$ is the only maximal  direct summand of $T'$.

Case 3:  $\mu X=\mu (T_{max})$. The proof is  similar to the Case 2 and we omit it here.
  \end{proof}

The following result is a direct consequence of Lemma~\ref{last object to first object} and Lemma~\ref{case common object which is the last object}.

\begin{lem}\label{connected in case with a  common direct summand}
Let $T$ and $T'$ be two tilting bundles in $\coh\X$ such that $T$ and $T'$ contain  a common quasi-simple direct summands $X$, then $T$ can be obtained from  $T'$  by bundle-mutations.
\end{lem}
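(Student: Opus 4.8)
The plan is to use Lemma~\ref{last object to first object} and Lemma~\ref{case common object which is the last object} in tandem, reducing the general situation to the case already handled. First I would invoke Lemma~\ref{last object to first object}(1) applied to $T$ with its common quasi-simple direct summand $X$: this produces a sequence of bundle-mutations transforming $T$ into a tilting bundle $T_1$ such that $X$ is the \emph{only} minimal direct summand of $T_1$. Applying the same lemma to $T'$ (again with the common summand $X$), we transform $T'$ into a tilting bundle $T_1'$ such that $X$ is the only minimal direct summand of $T_1'$, all via bundle-mutations.

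Next I would apply Lemma~\ref{case common object which is the last object}(1) to the pair $T_1$ and $T_1'$: both are tilting bundles, they share the common quasi-simple direct summand $X$, and $X$ is the only minimal direct summand of each. Hence $T_1$ can be transformed into $T_1'$ by bundle-mutations. Concatenating the three chains of bundle-mutations ($T \rightsquigarrow T_1$, $T_1 \rightsquigarrow T_1'$, $T_1' \rightsquigarrow T'$, the last being the reverse of the chain produced for $T'$, which is legitimate since bundle-mutations are reversible as noted after the definition), we conclude that $T$ can be obtained from $T'$ by bundle-mutations.

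The proof is essentially a bookkeeping argument once the two cited lemmas are in hand, so there is no real analytic obstacle here; the only subtlety worth stating explicitly is that the direction of bundle-mutation transformations is symmetric, so the chain realizing $T' \rightsquigarrow T_1'$ may be reversed to give $T_1' \rightsquigarrow T'$. One should also double-check that Lemma~\ref{last object to first object} does not require $X$ to be in any particular slope position in $T$ or $T'$ — and indeed it does not, it only requires $X$ to be a quasi-simple direct summand — so it applies verbatim to both tilting bundles. Everything else is immediate composition of the given transformations.

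\begin{proof}
By Lemma~\ref{last object to first object}(1) applied to $T$ and its quasi-simple direct summand $X$, the tilting bundle $T$ can be transformed by bundle-mutations into a tilting bundle $T_1$ such that $X$ is the only minimal direct summand of $T_1$. Likewise, applying Lemma~\ref{last object to first object}(1) to $T'$ and $X$, the tilting bundle $T'$ can be transformed by bundle-mutations into a tilting bundle $T_1'$ such that $X$ is the only minimal direct summand of $T_1'$. Now $T_1$ and $T_1'$ are two tilting bundles in $\coh\X$ sharing the common quasi-simple direct summand $X$, and $X$ is the only minimal direct summand of each of them. By Lemma~\ref{case common object which is the last object}(1), $T_1$ can be transformed into $T_1'$ by bundle-mutations. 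Since bundle-mutations are reversible, the chain transforming $T'$ into $T_1'$ can be reversed to transform $T_1'$ into $T'$. Composing these, $T$ can be obtained from $T'$ by bundle-mutations.
\end{proof}
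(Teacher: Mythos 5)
Your proof is correct and is exactly the argument the paper intends: the paper states the lemma as ``a direct consequence of Lemma~\ref{last object to first object} and Lemma~\ref{case common object which is the last object}'' without writing out the details, and your write-up supplies precisely that composition (normalize both $T$ and $T'$ so that $X$ is the only minimal direct summand, connect via Lemma~\ref{case common object which is the last object}(1), and reverse one chain). No gaps.
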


\begin{lem}\label{connected in case with a direct summand in perp}
Let $T$ and $T'$ be two tilting bundles in $\coh\X$, $X$ be a quasi-simple direct summand of $T$ and  $X'$ be a quasi-simple direct summand of $T'$. If $X\oplus X'$ is rigid, then $T$ can be obtained from  $T'$  by bundle-mutations.
\end{lem}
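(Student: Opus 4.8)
The plan is to interpolate between $T$ and $T'$ through a single tilting bundle that contains \emph{both} $X$ and $X'$ as direct summands, and then invoke the connectedness result for tilting bundles sharing a common quasi-simple direct summand (Lemma~\ref{connected in case with a  common direct summand}) twice.

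First I would note that $X$ and $X'$, being quasi-simple direct summands of the tilting bundles $T$ and $T'$ respectively, are themselves vector bundles; hence, by hypothesis, $X\oplus X'$ is a rigid vector bundle in $\coh\X$. Applying Lemma~\ref{tilting bundle passing X1} to $X\oplus X'$, I obtain a tilting bundle $T''$ in $\coh\X$ having $X\oplus X'$ as a direct summand. In particular, both $X$ and $X'$ are quasi-simple direct summands of $T''$.

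Next, since $T$ and $T''$ share the common quasi-simple direct summand $X$, Lemma~\ref{connected in case with a  common direct summand} shows that $T$ can be transformed into $T''$ by bundle-mutations. Likewise, since $T''$ and $T'$ share the common quasi-simple direct summand $X'$, the same lemma shows that $T''$ can be transformed into $T'$ by bundle-mutations. Concatenating the two sequences of bundle-mutations gives a transformation of $T$ into $T'$ by bundle-mutations, as desired.

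As for the main obstacle, there is essentially none at this point: the substantive work has already been carried out in Lemma~\ref{tilting bundle passing X1} (completing a rigid bundle to a tilting object while staying inside $\vect\X$) and in Lemma~\ref{connected in case with a  common direct summand}. The only point requiring a moment's care is verifying that $X\oplus X'$ is genuinely torsion-free, so that Lemma~\ref{tilting bundle passing X1} produces a tilting \emph{bundle} and not merely a tilting sheaf with finite-length summands; this is immediate since a direct summand of a vector bundle is again a vector bundle.
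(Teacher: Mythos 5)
Your proposal is correct and is essentially identical to the paper's own argument: both complete the rigid bundle $X\oplus X'$ to a tilting bundle $T''$ via Lemma~\ref{tilting bundle passing X1} and then apply Lemma~\ref{connected in case with a  common direct summand} twice, once through $X$ and once through $X'$. The extra remark that $X\oplus X'$ is torsion-free is a reasonable sanity check, though the paper leaves it implicit.
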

\begin{proof}
Since $X\oplus X'$ is rigid, by Lemma~\ref{tilting bundle passing X1},
we can choose a tilting bundle $T''$ containing $X\oplus X'$ as direct summands. Then $X$ is a common direct summand of $T$ and $T''$, while $X'$ is a common direct summand of $T'$ and $T''$.
By Lemma~\ref{connected in case with a common direct summand}, $T$ can be transformed into  $T'$ by bundle-mutations.
\end{proof}

\subsection{Connectedness  in general case} In this subsection, we will prove that any tilting bundle can be obtained from the canonical tilting bundle $T_{can}$ by bundle-mutations. 
\begin{lem}\label{case of all direct summands are linear bundles}
 Let $T=T_{can}(\vec{x})$ be a tilting bundle in $\coh\X$, then $T$ can be obtained from $T_{can}$ by bundle-mutations.
\end{lem}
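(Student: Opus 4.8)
The plan is to reduce to the case where $\vec{x}$ is one of the generators $\vec{x}_1,\dots,\vec{x}_t$ of $L(\mathbf{p})$, using the twist auto-equivalences, and then to invoke Lemma~\ref{connected in case with a common direct summand} via an explicit common summand. Throughout, write $T\sim T'$ to mean that the tilting bundle $T$ can be transformed into $T'$ by bundle-mutations; this is a symmetric (as noted in the text) and evidently transitive relation on tilting bundles.

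First I would record the following observation. For $\vec{z}\in L(\mathbf{p})$ the grading-shift $(\vec{z})\colon\coh\X\to\coh\X$, $E\mapsto E(\vec{z})$, is an exact auto-equivalence preserving $\vect\X$; since mutation of tilting objects is a purely categorical construction, $(\vec{z})$ carries tilting bundles to tilting bundles and bundle-mutations to bundle-mutations. Hence $T\sim T'$ implies $T(\vec{z})\sim T'(\vec{z})$, and applying this to $T_{can}$ shows that the set $G=\{\vec{z}\in L(\mathbf{p})\mid T_{can}\sim T_{can}(\vec{z})\}$ is a subgroup of $L(\mathbf{p})$: it is stable under $\vec{z}\mapsto-\vec{z}$ (shift a chain $T_{can}\sim T_{can}(\vec{z})$ by $-\vec{z}$) and under addition (if $T_{can}\sim T_{can}(\vec{z}_1)$ and $T_{can}\sim T_{can}(\vec{z}_2)$, shift the first chain by $\vec{z}_2$ to obtain $T_{can}(\vec{z}_2)\sim T_{can}(\vec{z}_1+\vec{z}_2)$ and concatenate with the second). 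Since $\vec{x}_1,\dots,\vec{x}_t$ generate $L(\mathbf{p})$, it suffices to prove $\vec{x}_i\in G$ for each $i$.

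To prove $\vec{x}_i\in G$, I would exhibit the line bundle $\co(\vec{x}_i)$ as a common indecomposable direct summand of $T_{can}$ and of $T_{can}(\vec{x}_i)$. On one hand $0\le\vec{x}_i\le\vec{c}$, because $\vec{c}-\vec{x}_i=(p_i-1)\vec{x}_i\ge 0$ (all weights in the tubular types are $\ge 2$), so $\co(\vec{x}_i)$ is one of the summands of $T_{can}=\bigoplus_{0\le\vec{y}\le\vec{c}}\co(\vec{y})$. On the other hand $\co(\vec{x}_i)$ is the image under $(\vec{x}_i)$ of the summand $\co$ of $T_{can}$, hence a summand of $T_{can}(\vec{x}_i)$. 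A line bundle is quasi-simple, so Lemma~\ref{connected in case with a common direct summand} gives $T_{can}\sim T_{can}(\vec{x}_i)$, i.e.\ $\vec{x}_i\in G$. Therefore $G=L(\mathbf{p})$, so in particular $\vec{x}\in G$ and $T=T_{can}(\vec{x})\sim T_{can}$.

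I do not expect a genuine obstacle here: once Lemma~\ref{connected in case with a common direct summand} is available, what remains are routine verifications (that grading shift is an auto-equivalence respecting bundle-mutations, and that each $\co(\vec{x}_i)$ sits inside the two relevant twisted canonical bundles). If one prefers to avoid the subgroup formulation, the same argument runs as a walk: write $\vec{x}=\sum_i m_i\vec{x}_i$ with $m_i\in\Z$, pass from $0$ to $\vec{x}$ by adding one $\pm\vec{x}_i$ at a time, and note that consecutive twisted bundles $T_{can}(\vec{z})$ and $T_{can}(\vec{z}\pm\vec{x}_i)$ always share the quasi-simple summand $\co(\vec{z}+\vec{x}_i)$ (respectively $\co(\vec{z})$), so Lemma~\ref{connected in case with a common direct summand} applies at each step and transitivity concludes.
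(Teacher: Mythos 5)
Your proposal is correct and takes essentially the same route as the paper: the key step in both is that $\co(\vec{x}_i)$ is a common quasi-simple direct summand of $T_{can}$ and $T_{can}(\vec{x}_i)$, so Lemma~\ref{connected in case with a common direct summand} applies, after which the general $\vec{x}$ is reached by iterating twists (the paper compresses this into ``induction on the degree of $\vec{x}$''). Your subgroup/walk formulation merely makes explicit the induction the paper leaves to the reader, and all the verifications you flag (twists are auto-equivalences preserving bundle-mutations, $0\le\vec{x}_i\le\vec{c}$) are sound.
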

\begin{proof}

If $\vec{x}=\vec{x}_i$ for some $1\leq i\leq t$, then $\co(\vec{x}_i)$ is a common quasi-simple direct summand of $T_{can}$ and $T_{can}(\vec{x}_i)$. By Lemma~\ref{connected in case with a common direct summand},   $T$ can be obtained from $T_{can}$ by bundle-mutations.
For the general case, one obtains the result by induction on the degree of $\vec{x}$.

\end{proof}

\begin{lem}\label{case one direct summand is a line bundle}
 Let $T$ be a tilting bundle in $\coh\X$ such that one direct summand of $T$ is a line bundle, then $T$ can be obtained from $T_{can}$ by bundle-mutations.
\end{lem}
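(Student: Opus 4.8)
The plan is to reduce immediately to Lemma~\ref{case of all direct summands are linear bundles} via the common-summand principle of Lemma~\ref{connected in case with a common direct summand}. First I would pick a direct summand $L$ of $T$ that is a line bundle. Since every line bundle on $\X$ is a twist of the structure sheaf, we may write $L\cong\co(\vec{y})$ for some $\vec{y}\in L(\mathbf{p})$. Now observe that $T_{can}(\vec{y})=\bigoplus_{0\leq\vec{x}\leq\vec{c}}\co(\vec{x}+\vec{y})$ is again a tilting bundle (a twist of the canonical one), and taking $\vec{x}=\vec{0}$ shows that $\co(\vec{y})=L$ is one of its indecomposable direct summands.

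Next I would record that $L$ is quasi-simple, since every line bundle is quasi-simple (with integer slope and $\tau$-period $p$). Hence $L$ is a common quasi-simple direct summand of the two tilting bundles $T$ and $T_{can}(\vec{y})$, so Lemma~\ref{connected in case with a common direct summand} applies and gives that $T$ can be transformed into $T_{can}(\vec{y})$ by bundle-mutations. On the other hand, Lemma~\ref{case of all direct summands are linear bundles} says that $T_{can}(\vec{y})$ can be transformed into $T_{can}$ by bundle-mutations. Concatenating the two sequences of bundle-mutations (recall that the relation ``is transformed into by bundle-mutations'' is symmetric and composable) yields that $T$ can be obtained from $T_{can}$ by bundle-mutations, as required.

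There is essentially no genuine obstacle left at this stage: all the real content has already been absorbed into the previously established lemmas — the connectedness statement Lemma~\ref{connected in case with a common direct summand}, which itself rests on the perpendicular-category description in Proposition~\ref{perpendicular cat is tame hereditary} and the analysis of $\APR$/co-$\APR$ mutations, together with the translation argument of Lemma~\ref{case of all direct summands are linear bundles}. The only points that need a line of justification are the identification of an arbitrary line-bundle summand $L$ with some $\co(\vec{y})$ and the ensuing remark that $\co(\vec{y})$ is a direct summand of $T_{can}(\vec{y})$; both are immediate from the definitions of $\coh\X$ and of $T_{can}$, so the proof is short.
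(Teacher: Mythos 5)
Your argument is correct and coincides with the paper's own proof: identify the line-bundle summand as $\co(\vec{y})$, use it as a common quasi-simple direct summand of $T$ and $T_{can}(\vec{y})$ to invoke Lemma~\ref{connected in case with a common direct summand}, and then reduce to $T_{can}$ via Lemma~\ref{case of all direct summands are linear bundles}. No gaps; the extra justifications you flag (every line bundle is a twist of $\co$ and is quasi-simple) are exactly the facts the paper relies on implicitly.
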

\begin{proof}
Without loss of generality, we may assume that $\co(\vec{x})$ is a direct summand of $T$.
 Then $\co(\vec{x})$ is a common quasi-simple direct summand of $T$ and $T_{can}(\vec{x})$.
 By Lemma~\ref{connected in case with a common direct summand} and Lemma~\ref{case of all direct summands are linear bundles},  $T$ can be obtained from $T_{can}$ by bundle-mutations.
\end{proof}

\begin{lem}\label{case the slop range contain an integer}
 Let $T$ be a tilting bundle  in $\coh\X$ such that the slope range of $T$ contained an integer, then $T$ can be obtained from $T_{can}$ by bundle-mutations.
\end{lem}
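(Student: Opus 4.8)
The plan is to reduce to the already-proven case where $T$ has a line bundle as a direct summand (Lemma~\ref{case one direct summand is a line bundle}). Suppose the slope range $[\mu T_{min},\mu T_{max}]$ of $T$ contains an integer $m$. Since every line bundle has integer slope and the line bundles of slope $m$ form (up to twist) part of the homogeneous tube $\mt_m$, the natural idea is to produce, by bundle-mutations, a tilting bundle with a line bundle of slope $m$ among its summands; then Lemma~\ref{case one direct summand is a line bundle} finishes the job. First I would invoke Lemma~\ref{exist p periods of quasi-simple object} to get a quasi-simple direct summand $X$ of $T$ whose $\tau$-period equals $p$. By Lemma~\ref{connected in case with a common direct summand} it suffices to connect $T$ to \emph{some} tilting bundle sharing the quasi-simple summand $X$, so really I only need to exhibit one convenient tilting bundle whose slope range contains $m$ and which can be linked to a line-bundle-containing tilting bundle.

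The key geometric step is to move a summand of slope in $[\mu T_{min},\mu T_{max}]$ onto a line bundle using the automorphisms of $\md^b(\X)$ acting on slopes (Lemma~\ref{LM2}, Lemma~\ref{LM2-1}, Lemma~\ref{slop-preserving functor}) together with Lemma~\ref{tilting bundle passing X}(c) and the slope-control afforded by Lemma~\ref{mor under automorphism}. Concretely: by Lemma~\ref{last object to first object} I can transform $T$, by bundle-mutations, into a tilting bundle $M$ for which $X$ is the only minimal direct summand; after applying a telescopic functor/automorphism (Theorem~\ref{telescopic functor}, Lemma~\ref{slope-preserving automorphism}) one may further arrange, using the $\SL(2,\Z)$-action, a tilting bundle whose slope range still straddles the integer $m$ but which now has $X$ placed so that $m$ lies strictly between $\mu X$ and the maximal slope. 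The point of having $m$ an \emph{integer} inside the slope range is that the homogeneous tube $\mt_m$ of slope $m$ then meets the perpendicular category of the relevant quasi-simple summand in its preprojective or preinjective part (Lemma~\ref{component}), so that tube contains a line bundle which, together with $X$ and a suitable preinjective tilting module in the perpendicular category (Lemma~\ref{tilting object from perpendicular category to hereditary category}, Lemma~\ref{APR and co-APR mutations are bundle mutations}), assembles into a tilting bundle $T^\circ$ reachable from $M$ by bundle-mutations. Since $T^\circ$ has a line bundle as a direct summand, Lemma~\ref{case one direct summand is a line bundle} gives a chain of bundle-mutations from $T^\circ$ to $T_{can}$, and concatenating with the chain $T\rightsquigarrow M\rightsquigarrow T^\circ$ yields the claim.

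I expect the main obstacle to be the bookkeeping in the middle step: arranging, simultaneously, that (i) the automorphism of $\md^b(\X)$ one applies actually sends the chosen tilting object into $\coh\X$ (not into $\coh\X[1]$), which is exactly what Lemma~\ref{mor under automorphism} controls and what forces the careful inequalities on slopes as in the proof of Theorem~\ref{slop change under APR and coAPR mutation}(1); and (ii) the resulting object genuinely contains a \emph{line} bundle (a quasi-simple object of integer slope and $\tau$-period $p$ in a homogeneous tube), not merely a quasi-simple object of integer slope in some exceptional tube. Handling (ii) is where the hypothesis that the slope range contains an integer is really used, and it may require separately treating the boundary subcases $m=\mu T_{min}$ and $m=\mu T_{max}$ versus $\mu T_{min}<m<\mu T_{max}$, much as in the case analysis of Lemma~\ref{last object to first object}. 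Once a line bundle is in sight, everything else is a routine appeal to the connectedness results already established.
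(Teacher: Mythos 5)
There is a genuine gap. The crux of this lemma is to produce a line bundle $L$ of slope $m$ that forms a rigid pair with a quasi-simple direct summand $X$ of $T$, so that Lemma~\ref{connected in case with a direct summand in perp} and Lemma~\ref{case one direct summand is a line bundle} can take over. You correctly identify this as the critical point (your item (ii)), but you offer no mechanism that actually establishes the required $\Ext$-vanishing between $X$ and a line bundle of slope $m$. The paper's mechanism is Lemma~\ref{line bundle belong to module canonical algebras} (Meltzer): for a tilting bundle $T$ and \emph{any} line bundle $L$, either $\Ext^1_{\X}(T,L)=0$ or $\Hom_{\X}(T,L)=0$. Taking $L=\co(m\vec{x}_t)$ and a quasi-simple summand $X$ of $T$ with $\mu X\ge m$ (which exists by Lemma~\ref{tilting in wing} applied to a maximal summand), the first alternative gives $\Ext^1_{\X}(X,L)=0$, while $\Ext^1_{\X}(L,X)=\D\Hom_{\X}(X,\tau L)=0$ follows from $\mu(\tau L)=m\le\mu X$ together with the fact that $X$ and $L$ lie in different tubes when $X$ is not itself a line bundle; in the second alternative one replaces $L$ by $\tau^{-1}L$, which is again a line bundle of slope $m$. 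Your proposal never invokes this dichotomy, and without it the assertion that the slope-$m$ line bundle ``assembles'' with $X$ into a tilting bundle $T^{\circ}$ is unsupported: Lemma~\ref{component} only locates objects that are \emph{already known} to lie in $X^{\perp}$, it does not show that $L$ (or some twist of it) lies there.

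There is also a factual error in your setup that propagates through the argument: line bundles of slope $m$ do \emph{not} lie in a homogeneous tube. They are rigid quasi-simple objects of $\tau$-period $p$, hence sit on the mouth of an exceptional tube of rank $p$ in $\mc_m$ (objects of homogeneous tubes are never rigid). So the appeal to ``the homogeneous tube $\mt_m$'' and to the nonvanishing-of-maps arguments in the style of Lemma~\ref{mor from homogenous tube} does not apply to the line bundles you need. Separately, the middle step is not well defined: applying a telescopic functor or an $\SL(2,\Z)$-automorphism to $M$ produces a different tilting object whose relation to $T$ by mutations is not established, and it simultaneously moves the integer $m$, so ``arranging that the slope range still straddles $m$'' is not something these automorphisms can be asked to do while also fixing $X$. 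The reduction to Lemma~\ref{case one direct summand is a line bundle} is the right target, but the bridge to it must go through Meltzer's lemma rather than through automorphisms and perpendicular categories.
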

\begin{proof}
Denote this integer by $m$. 
Let $L$ be the line bundle $\co(m\vec{x}_t)$, then $\mu L=\mu (\co(m\vec{x}_t))=m$.
  By Lemma~\ref{line bundle belong to module canonical algebras}, we have either $\Ext^1_{\X}(T,L)=0$ or $\Hom_{\X}(T,L)=0$. 

Case 1:  $\Ext^1_{\X}(T,L)=0$.   By Lemma~\ref{tilting in wing}, we can choose a quasi-simple direct summand of $T$, denoted by $X$, such that $\mu X\ge m$.
If $X$ is a line bundle, by Lemma~\ref{case one direct summand is a line bundle},  $T$ can be obtained from $T_{can}$ by bundle-mutations.
If  $X$ is not a line bundle, then $X$ and $L$ belong to different tubes since all quasi-simple objects in the tube which $L$ lies in  are line bundles.  Then by $\mu L\leq \mu X$, we must have $\Ext^1_{\X}(L,X)=0$.  Hence,     $X\oplus L$ is rigid. Let  $T'$ be a tilting bundle containing $L$ as a direct summand.  By Lemma~\ref{connected in case with a direct summand in perp},   $T$ can be obtained from $T'$ by bundle-mutations.  We conclude that $T$ can be obtained from $T_{can}$ by bundle-mutations by  Lemma~\ref{case one direct summand is a line bundle}.

Case 2:  $\Hom_{\X}(T,L)=0$. Then $\Ext_{\X}^1(\tauni L,T)\cong \D\Hom_{\X}(T,L)=0$. Notice that $\tauni L$ is  a line bundle and $\mu( \tauni L)=\mu L=m$. Similar to the Case 1, we can obtain the desired result. 
\end{proof}

\begin{lem}\label{extent integer}
Let $X$ be an indecomposable quasi-simple rigid object in $\coh\X$.  Suppose  the  $\tau$-period of $X$ is $p$ and  $\mu X=m+\frac{a}{b}$ for some integers $m,a,b$, where $(a,b)=1, 0<a<b$, then there are positive integers $c,d$ and  a   quasi-simple rigid object $Y$  in $\coh\X$ with  $\tau$-period $p$   such that $bc-ad=1,0<c\leq d< b, c\leq a$,  $\mu Y=m+\frac{c}{d}$ and $X\oplus Y$ is rigid in $\coh\X$.
\end{lem}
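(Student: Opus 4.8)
The plan is to realize $Y$ as the image of a line bundle under one automorphism of $\md^b(\X)$ whose action on slopes is the linear fractional transformation attached to a suitably chosen matrix in $\SL(2,\Z)$; a rigid pair of summands of $T_{can}$ will be transported onto the pair $X\oplus Y$.

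First I would invoke Lemma~\ref{exist of abcd} with the given $a,b$ to fix positive integers $c,d$ with $bc-ad=1$, $0<c\le d<b$ and $c\le a$; note $(c,d)=1$ is then automatic from $bc-ad=1$. These are exactly the integers demanded by the statement, with the required inequalities already in hand. Next I would form
\[
w=\left(\begin{array}{cc} m(d-b)+c-a & mb+a\\ d-b & b\end{array}\right),\qquad
\varphi(x)=\frac{[m(d-b)+c-a]x+mb+a}{(d-b)x+b},
\]
check directly that $\det w=bc-ad=1$, so $w\in\SL(2,\Z)$, and compute $\varphi(0)=m+\frac ab=\mu X$ and $\varphi(1)=m+\frac cd$, with $\varphi(0)<\varphi(1)$ because $bc-ad>0$. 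By Lemma~\ref{LM2} there is $\overline\varphi\in\Aut(\md^b(\X))$ whose action on slopes is $\varphi$.

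Now I would normalize $\overline\varphi$ so that it sends $\co$ exactly to $X$. Since $\overline\varphi(\co)$ has finite slope $m+\frac ab$, it lies in $\coh\X[l]$ for a unique $l$, and $\overline\varphi(\co)[-l]$ is a quasi-simple rigid vector bundle of slope $\mu X$ and $\tau$-period $p$ (automorphisms commute with $\tau$, hence preserve indecomposability, quasi-length and the $\tau$-period, which for the line bundle $\co$ equals $p$; a finite-slope indecomposable of $\coh\X$ is not of finite length, hence is a vector bundle). By Lemma~\ref{slop-preserving functor} some slope-preserving automorphism carries $\overline\varphi(\co)[-l]$ to $X$, and composing I obtain an automorphism $\Psi$ of $\md^b(\X)$ that still acts as $\varphi$ on slopes and satisfies $\Psi(\co)=X$. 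Since $p=p_t$ we have $\delta(\vec{x}_t)=1$, so $\mu\co=0<1=\mu\co(\vec{x}_t)$ while $\varphi(0)<\varphi(1)$; by Lemma~\ref{mor under automorphism} this forces $\Psi(\co)$ and $\Psi(\co(\vec{x}_t))$ to lie in a common shift $\coh\X[j]$, and $\Psi(\co)=X\in\coh\X$ gives $j=0$. Hence $Y:=\Psi(\co(\vec{x}_t))\in\coh\X$ is a quasi-simple rigid vector bundle of $\tau$-period $p$ with $\mu Y=\varphi(1)=m+\frac cd$. Finally $\co$ and $\co(\vec{x}_t)$ are both direct summands of $T_{can}$, so $\co\oplus\co(\vec{x}_t)$ is rigid; applying $\Psi$ and using $X,Y\in\coh\X$ yields $\Ext^1_{\X}(X\oplus Y,X\oplus Y)=0$, i.e.\ $X\oplus Y$ is rigid.

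I expect the crux to be the construction of $w$: the two prescribed slopes $m+\frac ab$ (at $x=0$) and $m+\frac cd$ (at $x=1$) must be met simultaneously by a single integer matrix, and it is precisely the identity $bc-ad=1$ supplied by Lemma~\ref{exist of abcd} that makes $\det w=1$ rather than some other integer. The remaining points — that $\Psi$ introduces no homological shift on the two relevant line bundles, and that a finite-slope indecomposable of $\coh\X$ is automatically a vector bundle — are routine once Lemma~\ref{mor under automorphism}, Lemma~\ref{slop-preserving functor} and the basic structure of $\coh\X$ are invoked.
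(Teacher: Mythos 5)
Your proposal is correct and follows essentially the same route as the paper: fix $c,d$ via Lemma~\ref{exist of abcd}, realize the prescribed slopes by an automorphism $\overline{\varphi}$ from Lemma~\ref{LM2} normalized so that $\co\mapsto X$ via Lemma~\ref{slop-preserving functor}, and transport a rigid pair, using Lemma~\ref{mor under automorphism} to rule out a shift. The only (harmless) difference is that you transport the pair $\co\oplus\co(\vec{x}_t)$ with the matrix sending $1\mapsto m+\tfrac{c}{d}$, whereas the paper transports $\co\oplus S$ for a simple sheaf $S$ of $\tau$-period $p$ using $\varphi(x)=\frac{(c+dm)x+(a+bm)}{dx+b}$, so that $\varphi(\infty)=m+\tfrac{c}{d}$; your treatment of the shift normalization is in fact slightly more careful than the paper's.
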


\begin{proof}
Since $(a,b)=1, 0<a<b$, by Lemma~\ref{exist of abcd}, there are positive integers $c,d$ such that $bc-ad=1,0< c\leq d< b, c\leq a$. Let $\varphi(x)=\frac{(c+dm)x+(a+bm)}{dx+b}$, since $\left(\begin{array}{cc}c+dm&a+bm \\d&b\end{array}\right)\in\SL(2,\Z)$, by Lemma~\ref{LM2},  there is an automorphism of $\md^b({\X})$, denoted by $\overline{\varphi}$, such that { $\overline{\varphi}$ coincides with the action of ${\varphi}$ on slopes.} Note that $\varphi(0)=m+\frac{a}{b}$. Since  $X$ is  a quasi-simple rigid  object in $\coh\X$ with $\tau$-period $p$  and  $\mu X=m+\frac{a}{b}$,  by Lemma~\ref{slop-preserving functor},  we may assume that $\overline{\varphi}(\co)=X$.  It is clear that there is a simple object $S$ in $\coh\X$ such that the $\tau$-period of $S$ is $p$ and $\co\oplus S$ is rigid. Let $Y=\overline{\varphi}(S)$, then we have $$\mu Y=\mu(\overline{\varphi} (S))=\varphi(\mu S)=\varphi(\infty)=m+\frac{c}{d}>m+\frac{a}{b}=\mu X.$$
By Lemma~\ref{mor under automorphism}, we deduce that $Y\in\coh\X$.
Since $\co\oplus S$ is rigid, we conclude that $X\oplus Y$ is a rigid object in $\coh\X$.
\end{proof}

\begin{lem}\label{case the slop range don't contain an integer}
Let $T$ be a tilting bundle such that the slope range of $T$ does not contain an integer, then by bundle-mutations,  $T$ can be transformed into  a tilting bundle $T'$  such that the slope range of $T'$  contains  an integer.
\end{lem}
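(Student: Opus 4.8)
The slope range of $T$ is an interval $[\mu T_{\min},\mu T_{\max}]$ of rational numbers containing no integer, so there is a unique integer $m$ with $m<\mu T_{\min}\le\mu T_{\max}<m+1$. The strategy is to use a quasi-simple direct summand of $T$ of maximal slope as a ``pivot'' and drag it (via bundle-mutations) down to slope exactly $m$, or up to $m+1$, thereby forcing $m$ (or $m+1$) into the slope range of the resulting tilting bundle. First I would invoke Lemma~\ref{tilting in wing}(3) to pick a quasi-simple direct summand $X$ of $T$ with $\mu X=\mu T_{\max}$ (taking the quasi-simple summand inside the wing of a maximal summand; its slope equals that of the whole wing), and, after replacing $X$ if necessary, arrange by Lemma~\ref{exist p periods of quasi-simple object} that I may assume the $\tau$-period of $X$ is $p$ — or handle the $\tau$-period issue at the end via Lemma~\ref{case two last object in different tube} as is done in the proof of Lemma~\ref{last object to first object}.

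**Key steps.** By Lemma~\ref{last object to first object}, $T$ can be transformed by bundle-mutations into a tilting bundle $M$ whose \emph{only} maximal direct summand is $X$. Write $\mu X=m+\tfrac{a}{b}$ with $(a,b)=1$ and $0<a<b$ (this is where we use that the slope range misses an integer: $\mu X$ is not itself an integer). Now apply Lemma~\ref{extent integer}: there are positive integers $c,d$ with $bc-ad=1$, $0<c\le d<b$, $c\le a$, and a quasi-simple rigid object $Y$ of $\tau$-period $p$ with $\mu Y=m+\tfrac{c}{d}>\mu X$ such that $X\oplus Y$ is rigid. By Lemma~\ref{connected in case with a direct summand in perp}, $M$ (hence $T$) can be transformed by bundle-mutations into any tilting bundle containing $Y$ as a quasi-simple direct summand; choose one, and then by Lemma~\ref{last object to first object} again push it to a tilting bundle $M'$ whose only maximal summand is $Y$. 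Iterating, we produce a sequence of slopes $m+\tfrac{a}{b}=\tfrac{a_0}{b_0}>\tfrac{a_1}{b_1}=m+\tfrac{c}{d}>\cdots$, at each stage the maximal slope of the current tilting bundle, where consecutive fractions are ``Farey neighbours'' (numerator of the difference $=\pm1$) with strictly decreasing denominators $b>d>\cdots$; since the $b_i$ are positive integers strictly decreasing, after finitely many steps we reach denominator $1$, i.e.\ a maximal slope equal to the integer $m$ (note $c\le d$ and $bc-ad=1$ force the limit fraction to be an integer, and the inequalities $0<\tfrac{c}{d}$, $c\le a$ keep us in the window $(m,m+1)$ so we land on $m$, not on a far-away integer). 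At that final stage the slope range contains $m$, and we set $T'$ to be that tilting bundle. The $\tau$-period-$p$ hypothesis needed to apply Lemma~\ref{extent integer} and Lemma~\ref{slop-preserving functor} at every stage is maintained because $Y$ is constructed with $\tau$-period $p$; if the very first $X$ has $\tau$-period $\ne p$, replace it at the outset by a quasi-simple rigid $X'\in\mc_{\mu X}$ of $\tau$-period $p$ and use Lemma~\ref{case two last object in different tube} to connect, exactly as in Subcase~2.2 of Lemma~\ref{last object to first object}.

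**Main obstacle.** The delicate point is the \emph{termination and localisation} of the Farey-type descent: I must verify that the inequalities produced by Lemma~\ref{exist of abcd} / Lemma~\ref{extent integer} ($0<c\le d<b$, $c\le a$) genuinely force the sequence of maximal slopes to stay inside the open interval $(m,m+1)$ and to reach the endpoint $m$ after finitely many steps, rather than converging to some non-integer or escaping the window. Concretely, $bc-ad=1$ with $0<c\le d<b$ means $\tfrac{c}{d}$ is the left neighbour of $\tfrac{a}{b}$ in the Farey sequence of order $b$; the standard mediant/continued-fraction argument shows the denominators strictly decrease and the terminal fraction is $\tfrac{0}{1}$ or $\tfrac{1}{1}$, i.e.\ we hit an integer; the condition $c\le a$ pins down \emph{which} integer, namely $m$. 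Once this arithmetic bookkeeping is in place, everything else is an application of the already-established connectedness lemmas (Lemma~\ref{last object to first object}, Lemma~\ref{connected in case with a direct summand in perp}, Lemma~\ref{case two last object in different tube}), and the proof is complete.
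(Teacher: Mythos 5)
Your proposal is essentially the paper's own argument: iterate Lemma~\ref{extent integer} to build a chain of pairwise-rigid quasi-simple bundles of $\tau$-period $p$ whose slopes are Farey neighbours with strictly decreasing denominators, and link the corresponding tilting bundles by Lemma~\ref{connected in case with a direct summand in perp}; this is correct, and the termination argument via decreasing denominators is exactly the point the paper relies on. Two small remarks: the detour through Lemma~\ref{last object to first object} (forcing the pivot to be the only maximal summand at each stage) is unnecessary --- any quasi-simple summand of $\tau$-period $p$, which exists by Lemma~\ref{exist p periods of quasi-simple object} and automatically has non-integer slope here, serves as the starting point --- and your bookkeeping has the descent running the wrong way: since $bc-ad=1$ gives $\mu Y=m+\frac{c}{d}>m+\frac{a}{b}=\mu X$, the slopes \emph{increase} and the process terminates at the fraction $\frac{1}{1}$, i.e.\ at the integer $m+1$ rather than $m$ (which of course still proves the lemma).
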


\begin{proof}
By Lemma~\ref{exist p periods}, let $X_0$ be an indecomposable direct summand of $T$ such that  the $\tau$-period of $X_0$ is $p$. By Lemma~\ref{tilting in wing}, we may assume that $X_0$ is quasi-simple. Suppose that $\mu( X_0)=m+\frac{a_0}{b_0}$ for some integers $m,a_0,b_0$, where $(a_0,b_0)=1, 0<a_0<b_0$. By Lemma~\ref{extent integer},
 there are positive integers $a_1,b_1$ and a  quasi-simple rigid object $X_1$ in $\coh\X$ with   $\tau$-period $p$ such that   $b_0a_1-a_0b_1=1,0< a_1\leq b_1< b_0, a_1\leq a_0$, $\mu X_1=m+\frac{a_1}{b_1}$ and $X_0\oplus X_1$ is rigid in $\coh\X$.  Since $0<a_1\leq b_1<b_0$,   we can continue this process until $a_k=b_k$ for some positive integer $k$.  Denote the quasi-simple rigid objects  appeared in this process by $X_i(1\leq i\leq k)$, then we have  $\mu(X_i)=m+\frac{a_i}{b_i}$ and  $X_{i-1}\oplus X_i$ is rigid in $\coh\X$ for each  $1\leq i\leq k$.   For each $1\leq i\leq k$, we choose a tilting bundle  $T_i$  such that $X_i$ is a direct summand of $T_i$. Denote by $T_0=T$.
By Lemma~\ref{connected in case with a direct summand in perp}, $T_{i-1}$ can be transformed into  $T_i$ by bundle-mutations for each $1\leq i\leq k.$
Therefore,  $T$ can be transformed into the tilting bundle $T_k$ by bundle-mutations where the direct summand $X_k$ of $T_k$ satisfies that  $\mu(X_k)=m+\frac{a_k}{b_k}=m+1$ is an integer.
\end{proof}

\subsection{Connectedness of the subgraph}
Denote  by $\mt_{\X}$ be the set of all basic tilting objects in $\coh\X$.
By~\cite{HU}, $\mt_{\X}$   admits a partial order $\leq$.
 The {\it tilting graph} $\cg(\mt_{\X})$ of $\coh\X$ is the Hasse diagram of the poset $(\mt_\X, \leq)$. Equivalently,
the {\it tilting graph} $\mathcal{G}(\mt_\X)$ of $\coh\X$ has as vertices the isomorphism classes of basic tilting objects in $\coh\X$, while two vertices $T$ and $T'$ are connected by an edge if and only if they differ by precisely one indecomposable direct summand.
Denote  by $\mt^v_{\X}$  the set of all basic tilting bundles in $\coh\X$, $\cg(\mt^v_{\X})$ the subgraph of the tilting graph $\cg(\mt_{\X})$ consisting of all basic tilting bundles in $\coh\X$.
\begin{thm}\label{main result}
Let $\X$ be a tubular weighted projective line.
 Then the  subgraph  $\cg(\mt^v_{\X})$  of the tilting graph  is connected.
\end{thm}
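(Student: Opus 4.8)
The plan is to show that every basic tilting bundle in $\coh\X$ can be connected to the canonical tilting bundle $T_{can}$ by a sequence of bundle-mutations, which by definition are edges of the subgraph $\cg(\mt^v_{\X})$. This immediately yields connectedness. The whole strategy has been set up by the preceding lemmas, so the proof is essentially a dispatch into cases according to how the slope range of a given tilting bundle $T$ sits relative to the integers, together with a reduction to $T_{can}$.

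First I would let $T$ be an arbitrary basic tilting bundle. The dichotomy is whether the slope range of $T$ contains an integer or not. If it does not, then by Lemma~\ref{case the slop range don't contain an integer}, $T$ can be transformed by bundle-mutations into a tilting bundle $T'$ whose slope range does contain an integer; so it suffices to treat the case where the slope range of $T$ contains an integer. In that case Lemma~\ref{case the slop range contain an integer} applies directly and shows $T$ can be obtained from $T_{can}$ by bundle-mutations. Since the relation ``can be transformed into each other by bundle-mutations'' is symmetric and transitive (as remarked right after the definition of bundle-mutation), it follows that any two tilting bundles are connected to $T_{can}$, hence to each other, in $\cg(\mt^v_{\X})$. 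This proves $\cg(\mt^v_{\X})$ is connected.

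For the write-up I would phrase it as: ``Let $T$ be a tilting bundle in $\coh\X$. If the slope range of $T$ contains an integer, then by Lemma~\ref{case the slop range contain an integer}, $T$ can be obtained from $T_{can}$ by bundle-mutations. Otherwise, by Lemma~\ref{case the slop range don't contain an integer}, $T$ can be transformed by bundle-mutations into a tilting bundle $T'$ whose slope range contains an integer, and then by Lemma~\ref{case the slop range contain an integer} the bundle $T'$ can be obtained from $T_{can}$ by bundle-mutations; composing, $T$ can be obtained from $T_{can}$ by bundle-mutations. Thus every tilting bundle is connected to $T_{can}$ in $\cg(\mt^v_{\X})$, so $\cg(\mt^v_{\X})$ is connected.''

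The main obstacle is not in this final assembly, which is a two-line case split, but was already absorbed into the earlier lemmas: controlling how slopes of indecomposable summands move under mutation (Theorem~\ref{slop change under APR and coAPR mutation}), the perpendicular-category machinery identifying $X^\perp$ and $^\perp X$ with module categories of tame hereditary algebras (Proposition~\ref{perpendicular cat is tame hereditary}), and the telescopic/$\SL(2,\Z)$-automorphisms used to relocate slope ranges (Lemmas~\ref{LM2}, \ref{slop-preserving functor}, \ref{mor under automorphism}) as exploited in Lemma~\ref{case the slop range don't contain an integer} and Lemma~\ref{extent integer}. Once those are in hand, Theorem~\ref{main result} is immediate. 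As an afterthought one may note, as promised in the introduction, that connectedness of $\cg(\mt^v_{\X})$ together with Lemma~\ref{tilting bundle passing X1} (every tilting sheaf can be mutated to a tilting bundle by mutating out the finite-length summands in order of quasi-length, using Lemma~\ref{mutation at quasi-length}) yields the connectedness of the full tilting graph $\cg(\mt_\X)$.
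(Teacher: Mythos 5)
Your proposal is correct and follows exactly the paper's own argument: reduce to connecting an arbitrary tilting bundle to $T_{can}$, split on whether the slope range contains an integer, and invoke Lemma~\ref{case the slop range contain an integer} and Lemma~\ref{case the slop range don't contain an integer} accordingly. Nothing essential differs from the published proof.
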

\begin{proof}
 Let $T$ be a basic tilting bundle in $\coh\X$, it is enough to prove that $T$ can be obtained from $T_{can}$ by bundle-mutations.
If the slope range of $T$ contained an integer,  by Lemma
\ref{case the slop range contain an integer}, $T$ can be obtained from $T_{can}$ by bundle-mutations.

If the slope range of $T$ does not contain an integer,  by Lemma~\ref{case the slop range don't contain an integer},  $T$ can be transformed into  a tilting bundle  $T'$ by bundle-mutations such that the slope range of $T'$  contains  an integer. Then  by Lemma
\ref{case the slop range contain an integer},   $T$ can be obtained from $T_{can}$ by bundle-mutations. Therefore, $\cg(\mt^v_{\X})$  is connected.
\end{proof}

Theorem~\ref{main result} yields an alternative proof for the connectedness of the tilting graph of $\cg(\ct_\X)$.

\begin{cor}\cite{BKL}\label{cor}
Let $\X$ be a tubular weighted projective line. Then
 the  tilting graph $\cg(\mt_{\X})$ of  $\coh\X$ is connected.
\end{cor}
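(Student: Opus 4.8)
The plan is to derive Corollary~\ref{cor} as a direct consequence of Theorem~\ref{main result} together with the already-established machinery for moving an arbitrary tilting sheaf to a tilting bundle. The strategy is: given an arbitrary basic tilting object $T$ in $\coh\X$, first connect $T$ (in the tilting graph $\cg(\mt_\X)$) to some tilting bundle, then invoke Theorem~\ref{main result} to connect that tilting bundle to $T_{can}$ through tilting bundles, which in particular is a path in $\cg(\mt_\X)$. Since every $T$ is then connected to the fixed vertex $T_{can}$, the whole graph $\cg(\mt_\X)$ is connected.

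First I would recall the construction used in the proof of Lemma~\ref{tilting bundle passing X1}: if $T$ is a tilting sheaf which is not a tilting bundle, let $X_1,\dots,X_s$ ($s\ge 1$) be all indecomposable direct summands of $T$ lying in $\mc_\infty$, ordered by quasi-length $q.l.\,X_i\le q.l.\,X_{i+1}$, and set $T' = \mu_{X_1}\cdots\mu_{X_s}(T)$. By Lemma~\ref{mutation at quasi-length} (applied repeatedly: mutating at the summand of smallest quasi-length among those in $\mc_\infty$ removes it from $\mc_\infty$ without creating new finite-length summands, because any wing containing $X_i$ would force a summand of strictly smaller quasi-length in $\mc_\infty$, contradicting minimality), the result $T'$ has no direct summand in $\mc_\infty$, hence is a tilting bundle. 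Each single mutation $\mu_{X_j}$ is an edge of $\cg(\mt_\X)$ by definition of the tilting graph, so $T$ and $T'$ are joined by a path in $\cg(\mt_\X)$.

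Next I would apply Theorem~\ref{main result}: the tilting bundle $T'$ can be transformed into $T_{can}$ by a sequence of bundle-mutations, and every bundle-mutation is in particular a mutation, hence an edge of $\cg(\mt_\X)$. Concatenating, $T$ is connected to $T_{can}$ in $\cg(\mt_\X)$. Since $T$ was an arbitrary basic tilting object and $T_{can}$ is a fixed vertex, every vertex of $\cg(\mt_\X)$ lies in the connected component of $T_{can}$, so $\cg(\mt_\X)$ is connected.

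I do not expect any genuine obstacle here; the only point requiring a little care is the claim that the iterated mutation in the first step indeed terminates in a tilting bundle, but this is exactly the content of the argument already given for Lemma~\ref{tilting bundle passing X1}, so it can simply be cited. The essential mathematical work — showing that tilting bundles form a connected subgraph — has been done in Theorem~\ref{main result}; the corollary only adds the observation that every tilting sheaf can be connected to the tilting-bundle locus by finitely many mutations within $\cg(\mt_\X)$.
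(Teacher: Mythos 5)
Your proposal follows the paper's proof exactly: reduce to Theorem~\ref{main result} by first mutating away all finite-length direct summands of $T$ (the construction already used for Lemma~\ref{tilting bundle passing X1}), then follow the resulting path of bundle-mutations to $T_{can}$ inside $\cg(\mt_\X)$. One detail in your parenthetical justification is backwards, though. If $X\in\mw_Z$ then $q.l.\,X\le q.l.\,Z$, so a wing containing $X_i$ forces a finite-length summand of \emph{larger} (or equal) quasi-length, not a smaller one; consequently one must mutate first at the summand of \emph{largest} quasi-length among those in $\mc_\infty$ --- which is what the paper's ordering $q.l.\,X_i\le q.l.\,X_{i+1}$ combined with $T'=\mu_{X_1}\cdots\mu_{X_s}(T)$ achieves, since $\mu_{X_s}$ is applied first. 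If you instead mutate first at the summand of smallest quasi-length and it happens to lie in the wing $\mw_{X_j}$ of a longer finite-length summand, Lemma~\ref{mutation at quasi-length} says the new complement stays in $\mw_{X_j}\subset\mc_\infty$, so that step fails to remove a finite-length summand. With the order corrected (or simply by citing Lemma~\ref{tilting bundle passing X1} as you do), the argument is exactly the paper's.
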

\begin{proof}
 By Theorem~\ref{main result}, it suffices to prove that a tilting object with direct summands of finite length can be  transformed to a tilting bundle by mutations.
Let $T$ be a tilting object in $\coh\X$ with $X_1,\cdots,X_s (s\ge 1)$ are all the indecomposable direct summands of $T$ which lie in $\mc_{\infty}$. Suppose that $q.l. X_i\leq q.l. X_{i+1}$ for $1\leq i\leq s-1$.
Let $T'=\mu_{X_1}\cdots\mu_{X_s}(T)$.
By Lemma~\ref{mutation at quasi-length}, $T'$ has no direct summand lying in $\mc_{\infty}$, \ie $T'$ a tilting bundle.
\end{proof}


\subsection*{Acknowledgments}
The author is grateful to Changjian Fu for his many helpful discussions and suggestions.

\end{document}